\theoremstyle{plain}
\newtheorem{theorem}{Theorem}[section]
\newtheorem{corollary}[theorem]{Corollary}
\newtheorem{lemma}[theorem]{Lemma}
\newtheorem{proposition}[theorem]{Proposition}
\theoremstyle{definition}
\newtheorem{definition}[theorem]{Definition}
\newtheorem{remark}[theorem]{Remark}
\numberwithin{equation}{section}
\newcommand{\hcal}{\mathcal{H}}
\newcommand{\R}{\mathbb{R}}
\newcommand{\N}{\mathbb{N}}
\newcommand{\Z}{\mathbb{Z}}
\newcommand{\jacobi}{J_u}
\newcommand{\vv}{b}
\newcommand{\bb}{\widehat{b}}
\newcommand{\id}{I}
\renewcommand{\div}{{\rm div}}
\newcommand{\tr}{{\rm tr}}
\renewcommand{\d}{\mathop{}\!\mathrm{d}}
\newcommand{\supp}{\, {\rm supp} \,}
\newcommand{\elliptic}{c_0}
\newcommand{\bounded}{C_0}
\newcommand{\cc}{\textrm{\bf c}}
\newcommand{\diam}{{\rm diam}}
\newcommand{\exit}{\vartheta}
\newcounter{desccount}
\newcommand{\descref}[1]{\hyperref[#1]{#1}}
\begin{document}
	
\title[Boundary H\"{o}lder continuity of stable solutions in $C^{1,1}$ domains]{Boundary H\"{o}lder continuity of stable solutions to semilinear elliptic problems in $C^{1,1}$ domains}
\author[I. U. Erneta]{I\~{n}igo U. Erneta}
    \address{I\~{n}igo U. Erneta. Department of Mathematics\\
    Rutgers University\\
110 Frelinghuysen Rd., Piscataway, NJ 08854, USA}
    \email{inigo.erneta@rutgers.edu}

\thanks{The author acknowledges financial support from MINECO grant MDM-2014-0445-18-1 through the Mar\'{i}a de Maeztu Program for Units of Excellence in R\&D.
He is additionally supported by Spanish grants MTM2017-84214-C2-1-P and PID2021-123903NB-I00 funded by MCIN/AEI/10.13039/501100011033 and by ERDF ``A way of making Europe''.
The author is also supported by Catalan project 2021 SGR 00087.
}

\begin{abstract}
This article establishes the boundary H\"{o}lder continuity of stable solutions to semilinear elliptic problems in the optimal range of dimensions $n \leq 9$, for $C^{1,1}$ domains. We consider equations $- L u = f(u)$ in a bounded $C^{1,1}$ domain $\Omega \subset \R^n$, with $u = 0$ on $\partial \Omega$, where $L$ is a linear elliptic operator with variable coefficients and $f \in C^1$ is nonnegative, nondecreasing, and convex. The stability of $u$ amounts to the nonnegativity of the principal eigenvalue of the linearized equation $- L - f'(u)$. Our result is new even for the Laplacian, for which [Cabr\'{e}, Figalli, Ros-Oton, and Serra, Acta Math. 224 (2020)] proved the H\"{o}lder continuity in $C^3$ domains.
\end{abstract}

%% \tableofcontents %% Just for papers exceeding 50 pages.

\maketitle

\section{Introduction}

Let $\Omega \subset \R^n$ be a bounded domain and $f \colon \R \to \R$ a $C^1$ function.
In this paper, we consider stable solutions
$u\colon \overline{\Omega} \to \R$
to the semilinear problem
\begin{equation}\label{pb:omega}
\left\{
\begin{array}{cl}
- L u = f(u) & \text{ in } \Omega\\
u = 0 & \text{ on } \partial \Omega,
\end{array}
\right.
\end{equation}
where $L$ is a
second order linear elliptic differential operator
of the form
\begin{equation}
\label{def:op}
L u = a_{ij}(x) u_{ij} + \vv_i(x) u_i, \quad a_{ij}(x) = a_{ji}(x).
\end{equation}
We assume that the coefficient matrix $A(x) = (a_{ij}(x))$ is uniformly elliptic in $\Omega$, i.e., there are positive constants $\elliptic$, $\bounded$ such that 
\begin{equation}
\label{elliptic}
\elliptic |p|^2 \leq a_{ij}(x) p_i p_j \leq \bounded |p|^2 \quad \text{ for all } p \in \R^n.
\end{equation}
This last condition is denoted by $\elliptic \leq A(x) \leq \bounded$.
In addition, we will always assume that 
\begin{equation}
\label{reg:coeffs}
a_{ij} \in C^{0,1}(\overline{\Omega}), \quad \vv_i \in L^{\infty}(\Omega) \cap C^0(\Omega).
\end{equation}
For some auxiliary results, we will further need that
\begin{equation}
\label{reg:b}
\vv_i \in C^{0}(\overline{\Omega}).
\end{equation}

A strong solution $u \in C^{0}(\overline{\Omega}) \cap W^{2,n}_{\rm loc}(\Omega)$ of \eqref{pb:omega} is \emph{stable} if the principal eigenvalue of the linearized equation at $u$ is nonnegative.
Equivalently, the solution $u$ is stable if there is a function $\varphi \in W^{2,n}_{\rm loc}(\Omega)$ satisfying
\begin{equation}
\label{stable:point}
\left\{
\begin{array}{cl}
\jacobi \varphi \leq 0 &\text{ a.e. in } \Omega,\\
\varphi > 0 &\text{ in } \Omega,
\end{array}
\right.
\end{equation}
where $\jacobi := L + f'(u)$ denotes the Jacobi operator (the linearization) at $u$.
For variational problems, the stability condition amounts to the nonnegativity of the second variation.
In particular, the class of stable solutions contains (local and global) minimizers.
Instead, here we are interested in non-variational equations as above.
For fundamental properties of the principal eigenvalue of linear non-selfadjoint operators such as $\jacobi$, we refer to the classic work of Berestycki, Nirenberg, and Varadhan~\cite{BerestyckiNirenbergVaradhan}.

The aim of this article is to investigate the regularity up to the boundary of stable solutions to~\eqref{pb:omega}.
Here, the question reduces to showing that solutions are bounded, since the linear theory then allows to prove further smoothness properties.
Our present work extends the boundary regularity results of Cabr\'{e}, Figalli, Ros-Oton, and Serra~\cite{CabreFigalliRosSerra} and of Cabr\'{e}~\cite{CabreRadial} for the Laplacian to the above operators with variable coefficients.
When $n \geq 10$, examples of singular stable solutions have been known for a long time.
In the breakthrough article~\cite{CabreFigalliRosSerra}, the authors solved the long-standing conjecture: if $n \leq 9$, then all stable solutions are bounded (when $L = \Delta$).
Their proof was quite delicate and relied on a contradiction-compactness argument which did not allow to quantify the constants in the estimates.
An alternative quantitative proof has been recently found in~\cite{CabreRadial}, although it only applies to the Laplacian in flat domains.
Generalizing and combining ideas form these two works, we will give, for the first time (even for the Laplacian), a quantitative proof valid in curved domains.

For $L = \Delta$, a key assumption needed in~\cite{CabreFigalliRosSerra} was for the domain to be of class $C^3$.
As we explain next, our analysis will allow us to weaken this condition to a $C^{1,1}$ regularity assumption.
For this, starting from a curved boundary, we flatten it out (locally) by a change of variables.
This transformation does not alter the semilinear nature of the equation but modifies the coefficients, which now involve first and second order derivatives of the flattening map.
Namely, the new coefficients $a_{ij}$ include first derivatives of this map, while the $\vv_i$ contain second derivatives.
Now, the crucial point is to obtain universal a priori estimates independent of the nonlinearity (in the spirit of~\cite{CabreFigalliRosSerra}) and having a specific dependence on the coefficients.
Our bounds will depend only on the ellipticity constants in \eqref{elliptic} and on the norms $\|\nabla a_{ij}\|_{L^{\infty}}$ and~$\|\vv_i\|_{L^{\infty}}$ of the coefficients, corresponding to the flattening map of a $C^{1,1}$ domain.
As mentioned above, thanks to a device of~\cite{CabreRadial} for flat domains, our estimates in the new coordinates will all be quantitative.
Combined with the interior bounds that we established in~\cite{ErnetaInterior}, they will yield a global estimate.

We believe that our ideas can also be applied to study the boundary regularity of stable solutions for other equations.
Our method provides a robust, direct way of proving quantitative estimates up to the boundary.
In particular, when $L$ is the $p$-Laplacian, we could extend the optimal interior bounds of Cabr\'{e}, Miraglio, and Sanch\'{o}n in~\cite{CabreMiraglioSanchon} up to the boundary.
By contrast, the previous work~\cite{CabreFigalliRosSerra} relies on an intricate blow-up and Liouville theorem argument.
The authors of~\cite{CabreFigalliRosSerra} need this in order to apply a result of theirs only available on a flat boundary, which they could only prove by contradiction-compactness.
This critical step does not allow them to quantify the constants in their inequalities.

Variational problems have a long history of regularity results for stable solutions, starting with the pioneering work of Crandall and Rabinowitz~\cite{CrandallRabinowitz} in the seventies.
For exponential and power nonlinearities, they showed that stable solutions are bounded in smooth domains when $n \leq 9$ 
(see also Joseph and Lundgren~\cite{JosephLundgren} for an exhaustive analysis of the radial case).
Their result is optimal, since the logarithm $u(x) = \log(1/|x|^2) \in W^{1,2}_0(\Omega)$ solves \eqref{pb:omega} (in the weak sense) with $\Omega = B_1$, $L = \Delta$, and $f(u) = 2(n-2)e^u$, and is stable for $n \geq 10$.
This last fact follows immediately from Hardy's inequality.
Surprisingly,~\cite{CrandallRabinowitz} appears to be the only variational paper where variable coefficients have been considered.
Namely, the a priori estimates in~\cite{CrandallRabinowitz} apply to self-adjoint operators in divergence form, with merely bounded coefficients.
However, the methods used cannot be extended to treat more general nonlinearities.

The motivation for considering exponential nonlinearities in~\cite{CrandallRabinowitz} came from problems in combustion theory, 
namely, from the so called explosion or Gelfand problem~\cite{Gelfand} (recalled in Subsection~\ref{subsection:gelfand} below).
In the nineties, Brezis~\cite{Brezis-IFT} asked whether the optimal dimension could be the same for more general nonlinearities.
He was interested in a natural class of nonlinearities for which the Gelfand problem admits stable solutions, namely:
nonnegative, nondecreasing, convex, and superlinear ones.
This question motivated a series of works trying to establish global a priori estimates for stable solutions 
to \eqref{pb:omega} in the model case, i.e., when $L = \Delta$.
First, Nedev~\cite{Nedev} proved their boundedness for $n \leq 3$.
Then, Cabr\'{e} and Capella~\cite{CabreCapella-Radial} reached the optimal dimension $n\leq 9$ in the radial case.
Later, Cabr\'{e}~\cite{Cabre-Dim4} and Villegas~\cite{Villegas} showed the boundedness when $n \leq 4$.
Afterwards, Cabr\'{e} and Ros-Oton~\cite{CabreRosOton-DoubleRev} proved the boundedness for $n \leq 7$ when $\Omega$ is a domain of double revolution.
Finally, Cabr\'{e}, Figalli, Ros-Oton, and Serra~\cite{CabreFigalliRosSerra} solved the conjecture in $C^3$ domains.

Concerning non-variational problems, there is only one paper, to the best of our knowledge, studying the regularity of stable solutions in our setting.
In~\cite{CowanGhoussoub}, Cowan and Ghoussoub consider operators of the form~\eqref{def:op}, assuming that $a_{ij} = \delta_{ij}$ and $\vv_i \in C^{\infty}(\overline{\Omega})$, where $\Omega \subset \R^n$ is a smooth bounded domain.
They showed that stable solutions are bounded when $n \leq 9$ for the exponential nonlinearity.
In particular, adding an advection term does not modify the optimal dimension in this case.
This is in accordance with our present work, where our mild smoothness assumption on the coefficients guarantees the invariance of the optimal dimension for general nonlinearities.
It is worth noting that the interest of the authors in~\cite{CowanGhoussoub} was in singular nonlinearities appearing in the modeling of MEMS devices, namely, nonlinearities $f = f(u)$ defined for $u \in [0,1)$ which blow up at $u = 1$.

Finally, we would also like to mention the recent work of Costa, de Souza, and Montenegro~\cite{CostadeSouzaMontenegro} for a more general non-variational setting.
In that paper, the authors consider the Gelfand problem for systems of equations including operators of the form~\eqref{def:op}.
While they are mostly concerned with the existence of stable solutions to this problem, they also address the question of regularity, but only for the Laplacian.
More precisely, they study stable solutions $u \colon \R^n \to \R^m$ (with $m \geq 2$) of $- \Delta u = F(u)$, where $F \colon \R^m \to \R^m$ satisfies natural assumptions analogous to ours.
In the radial case, they are able to show that they are bounded for $n \leq 9$, which is the optimal dimension for scalar equations.
For convex $C^{1,1}$ domains, they show their boundedness for $n \leq 3$ by adapting the interior estimates of Cabr\'{e}~\cite{Cabre-Dim4}.
We believe that our techniques can also be used to reach the optimal dimension for systems of equations in any $C^{1,1}$ domain.

\subsection{Main results}

Our main result provides two types of a priori estimates for strong stable solutions on domains of class $C^{1,1}$.
The first one is an energy estimate valid in all dimensions.
It was announced in our previous paper~\cite{ErnetaBdy1}, 
where we proved it in flat domains (see Theorem~\ref{thm:previous} below).
Here, we will complete the proof, which involves a covering and approximation procedure.
The second estimate is a bound of the H\"{o}lder norm in the optimal range of dimensions $n \leq 9$.
Here and throughout the paper,
when we write $C = C(\ldots)$ for a constant $C$ we mean that $C$ depends only on the quantities appearing inside the parentheses.

\begin{theorem}
\label{thm:c11}
Let $\Omega \subset \R^n$ be a bounded domain of class $C^{1,1}$,
let $L$ satisfy conditions~\eqref{elliptic} and~\eqref{reg:coeffs} in $\Omega$,
%let $L$ be an operator of the form~\eqref{def:op} satisfying conditions~\eqref{elliptic} and~\eqref{reg:coeffs} in $\Omega$,
and let $f \in C^1(\R)$ be nonnegative, nondecreasing, and convex.
Let $u \in C^{0}(\overline{\Omega}) \cap W^{2,n}_{\rm loc}(\Omega)$ be a nonnegative stable solution of $- L u = f(u)$ in $\Omega$, with $u = 0$ on~$\partial \Omega$.

%Let $\Omega \subset \R^n$ be a bounded domain of class $C^{1,1}$
%and let $L$ satisfy conditions~\eqref{elliptic} and~\eqref{reg:coeffs} in $\Omega$.
%Assume that $f \in C^1(\R)$ is nonnegative, nondecreasing, and convex.
%
%Let $u \in C^{0}(\overline{\Omega}) \cap W^{2,n}_{\rm loc}(\Omega)$ be a nonnegative stable solution of $- L u = f(u)$ in $\Omega$, with $u = 0$ on $\partial \Omega$.

Then
\begin{equation}
\label{c11:hint}
\|\nabla u\|_{L^{2+\gamma}(\Omega)} \leq C \|u\|_{L^1(\Omega)},
\end{equation}
where $\gamma = \gamma(n) > 0$ and 
$C = C(\Omega, n, \elliptic, \bounded, \| \nabla a_{ij}\|_{L^{\infty}(\Omega)}, \|b_i\|_{L^{\infty}(\Omega)})$.
In addition,
\begin{equation}
\label{c11:holder}
\|u\|_{C^{\alpha}(\overline{\Omega})} \leq C \|u\|_{L^1(\Omega)} \quad \text{ if } n \leq 9,
\end{equation}
where $\alpha = \alpha(n, \elliptic, \bounded) > 0$ and 
$C = C(\Omega, n, \elliptic, \bounded, \| \nabla a_{ij}\|_{L^{\infty}(\Omega)}, \|b_i\|_{L^{\infty}(\Omega)})$.
\end{theorem}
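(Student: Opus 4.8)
\emph{Proof proposal.} The plan is to reduce the two global bounds to local ones by a finite covering of $\overline{\Omega}$, treating interior balls with the results of \cite{ErnetaInterior} and boundary balls by straightening $\partial\Omega$. Fix $x_0\in\overline{\Omega}$. If $B_{2\rho}(x_0)\subset\Omega$ for a small $\rho=\rho(\Omega)>0$, then $u$ is a stable solution of $-Lu=f(u)$ in $B_{2\rho}(x_0)$, and the interior estimates of \cite{ErnetaInterior}, applied on $B_{2\rho}(x_0)$, give $\|\nabla u\|_{L^{2+\gamma}(B_{\rho}(x_0))}\le C\|u\|_{L^1(B_{2\rho}(x_0))}$ and, when $n\le 9$, $\|u\|_{C^{\alpha}(\overline{B_{\rho}(x_0)})}\le C\|u\|_{L^1(B_{2\rho}(x_0))}$, with $C$, $\gamma$ and $\alpha$ of the asserted form. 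Thus only a neighbourhood of $\partial\Omega$ remains.

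Near a point $x_0\in\partial\Omega$, since $\Omega$ is of class $C^{1,1}$ we may choose a radius $r=r(\Omega)>0$ and a $C^{1,1}$ diffeomorphism $\Phi$ carrying $\Omega\cap B_r(x_0)$ onto the half-ball $B_1^+:=B_1\cap\{x_n>0\}$ and $\partial\Omega\cap B_r(x_0)$ onto $\{x_n=0\}\cap B_1$, with $\|D\Phi\|$, $\|D\Phi^{-1}\|$ and $\|D^2\Phi\|_{L^{\infty}}$ bounded in terms of the $C^{1,1}$ character of $\Omega$. Set $v:=u\circ\Phi^{-1}\in C^{0}(\overline{B_1^+})\cap W^{2,n}_{\rm loc}(B_1^+)$. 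A direct computation shows that $v$ solves $-\widetilde{L}v=f(v)$ in $B_1^+$ with $v=0$ on $\{x_n=0\}\cap B_1$, where $\widetilde{L}w=\widetilde{a}_{ij}w_{ij}+\widetilde{b}_iw_i$ with $\widetilde{a}_{ij}=\widetilde{a}_{ji}$: the $\widetilde{a}_{ij}$ are built from $A$ and $D\Phi$, hence uniformly elliptic and Lipschitz with constants controlled by $\elliptic$, $\bounded$, $\|\nabla a_{ij}\|_{L^{\infty}(\Omega)}$ and the $C^{1,1}$ character of $\Omega$, while the $\widetilde{b}_i$ are built from $b$, $A$, $D\Phi$ and $D^2\Phi$, hence merely bounded, with $\|\widetilde{b}_i\|_{L^{\infty}(B_1^+)}$ controlled by these same quantities together with $\|b_i\|_{L^{\infty}(\Omega)}$. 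Crucially, $v$ is again a \emph{stable} solution: if $\varphi>0$ in $\Omega$ satisfies $\jacobi\varphi\le 0$, then $\widetilde{\varphi}:=\varphi\circ\Phi^{-1}>0$ in $B_1^+$ satisfies $(\widetilde{L}+f'(v))\widetilde{\varphi}\le 0$. Applying the flat half-ball energy estimate of Theorem~\ref{thm:previous} to $v$ gives $\|\nabla v\|_{L^{2+\gamma}(B_{1/2}^+)}\le C\|v\|_{L^1(B_1^+)}$, and pulling back through the bi-Lipschitz map $\Phi$ (under which $L^p$ norms of gradients, and $C^{\alpha}$ norms, are comparable with controlled constants) yields $\|\nabla u\|_{L^{2+\gamma}(\Omega\cap B_{r/2}(x_0))}\le C\|u\|_{L^1(\Omega\cap B_r(x_0))}$.

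It then suffices to extract a finite subcover of $\overline{\Omega}$ by balls of the two kinds above, sum the resulting local estimates, and bound each local $L^1$ norm by $\|u\|_{L^1(\Omega)}$; this proves \eqref{c11:hint}. One technical point requires an approximation step: the proof of Theorem~\ref{thm:previous} in \cite{ErnetaBdy1} uses auxiliary results needing more regularity of the drift than the merely bounded $\widetilde{b}_i$ provides (it involves $D^2\Phi\in L^{\infty}$). To remedy this, one mollifies a $C^{1,1}$ defining function of $\Omega$, obtaining smooth domains with uniform $C^{1,1}$ bounds whose straightening maps, and hence whose transformed coefficients, are smooth; Theorem~\ref{thm:previous} then applies with constants uniform in the approximation parameter, and one passes to the limit. (Likewise, since $u$ is a priori only $W^{2,n}_{\rm loc}(\Omega)$, one first argues on slightly smaller half-balls $B_{1-\delta}^+$ and lets $\delta\downarrow 0$.)

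For \eqref{c11:holder} the same covering scheme applies, provided one has a flat half-ball H\"older estimate valid for $n\le 9$. In $B_1^+$, the stability inequality for $v$ — combined with the energy bound just obtained and the geometry of the flat portion $\{x_n=0\}$, along the lines of \cite{CabreFigalliRosSerra} — upgrades that bound to $\|v\|_{L^{\infty}(B_{1/2}^+)}\le C\|v\|_{L^1(B_1^+)}$ precisely in the range $n\le 9$. Once $v$ is bounded, $f(v)\in L^{\infty}(B_{1/2}^+)$, so $-\widetilde{L}v=f(v)\in L^{\infty}$ with $v=0$ on $\{x_n=0\}\cap B_{1/2}$, and interior and boundary De Giorgi--Nash--Moser estimates for the linear equation give $\|v\|_{C^{\alpha}(\overline{B_{1/4}^+})}\le C\|v\|_{L^1(B_1^+)}$ with $\alpha=\alpha(n,\elliptic,\bounded)>0$. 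Pulling back by $\Phi$, gluing with the interior H\"older estimate of \cite{ErnetaInterior}, and summing over a finite cover of $\overline{\Omega}$ yields \eqref{c11:holder}. The main obstacle is precisely this dimension-sensitive step on a curved (merely $C^{1,1}$) boundary: the first- and second-order terms of the flattening map $\Phi$ that enter $\widetilde{a}_{ij}$ and $\widetilde{b}_i$ must be absorbed in the stability computation uniformly and without eroding the optimal threshold $n\le 9$; and, following the device of \cite{CabreRadial}, every constant must be tracked explicitly, so that no compactness argument is used, in contrast with \cite{CabreFigalliRosSerra}. By comparison the covering and the approximation arguments, though necessary, are routine.
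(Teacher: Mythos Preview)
Your covering-and-flattening scheme matches the paper's Step~1, but the approximation step has a genuine gap. Theorems~\ref{thm:previous} and~\ref{thm:holder} require the solution to lie in $W^{3,p}(B_1^+)$ for some $p>n$ and the drift to lie in $C^0(\overline{B_1^+})$. After a $C^{1,1}$ flattening, $\widetilde b_i$ involves $D^2\Phi$ and is merely $L^\infty$, while $v=u\circ\Phi^{-1}$ is only $W^{2,n}_{\rm loc}$. Mollifying the defining function, as you propose, produces smooth domains $\Omega_k$ and smooth transformed coefficients, but it does \emph{not} produce a solution on $\Omega_k$ to which the half-ball theorems apply: the original $u$ is neither a solution for the mollified operator nor in $W^{3,p}$ up to $\partial\Omega_k$ (the drift $b_i$ is only $L^\infty\cap C^0$, so you cannot differentiate the equation). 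Your remark about shrinking to $B_{1-\delta}^+$ addresses the wrong issue. The paper closes this gap by also mollifying $b_i$ to $b_i^k$ and then \emph{constructing} the minimal stable solution $u_k$ of $-L_k u_k=(1-\varepsilon_k)f(u_k)$ in $\Omega_k$ by monotone iteration, using $u$ as a barrier; by elliptic regularity $u_k\in W^{3,p}(\Omega_k)$, so Step~1 applies with uniform constants. The limit $u^\star=\lim u_k$ is shown to be a stable solution (this uses convexity of $f$), and one must then invoke \emph{uniqueness of stable solutions} for convex $f$ (Proposition~\ref{prop:uniqueness}) to conclude $u^\star=u$; the case $f(0)=0$ requires a separate argument via principal eigenfunctions. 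None of this is routine, and none of it is in your proposal.

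Separately, your H\"older step on $B_1^+$ is not a proof. You assert that stability ``upgrades'' the energy bound to $L^\infty$ ``along the lines of~\cite{CabreFigalliRosSerra}'', but this is precisely the content of Theorem~\ref{thm:holder}, and the paper's point is that it does \emph{not} follow~\cite{CabreFigalliRosSerra}: it proves H\"older continuity directly via the decay of a weighted Dirichlet integral, driven by Propositions~\ref{prop:rad} and~\ref{prop:l1rad}, with no $L^\infty$ intermediate step and no compactness. Your proposed route through $L^\infty$ and then linear De~Giorgi--Nash--Moser would also make $\alpha$ depend on $\|\widetilde b\|_{L^\infty}$, hence on $\Omega$, contrary to the stated dependence $\alpha=\alpha(n,\elliptic,\bounded)$.
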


The proof of Theorem~\ref{thm:c11} relies 
on analogous boundary estimates in half-balls, given next,
as well as in the interior bounds that we obtained in~\cite{ErnetaInterior}.
Below, we always write $\R^n_+ = \{x \in \R^n \colon x_n > 0\}$ and, for each $\rho > 0$, we let
\[
B_{\rho}^{+} := \R^n_+ \cap B_{\rho}.
 \]
Moreover, for any open 
set $\Omega \subset \R^n_+$, we denote its lower and upper boundaries by
\[
\partial^{0} \Omega = \{x_n = 0\} \cap \partial \Omega, \quad \partial^{+}\Omega = \R^n_+ \cap \partial \Omega.
\]

The H\"{o}lder estimate~\eqref{c11:holder} will be a consequence of the following:
\begin{theorem}
\label{thm:holder}
Let $L$ satisfy conditions~\eqref{elliptic},~\eqref{reg:coeffs}, and~\eqref{reg:b} in $\Omega = B_1^{+}$,
and let $f \in C^1(\R)$ be nonnegative, nondecreasing, and convex.
Let $u \in W^{3,p}(B_1^{+})$, for some $p > n$, be a nonnegative stable solution to $- L u = f(u)$ in $B_1^{+}$, with $u = 0$ on $\partial^0 B_1^{+}$.
%Let $L$ satisfy conditions~\eqref{elliptic},~\eqref{reg:coeffs}, and~\eqref{reg:b} in $\Omega = B_1^{+}$.
%Assume that $f \in C^1(\R)$ is nonnegative, nondecreasing, and convex.
%
%Let $u \in W^{3,p}(B_1^{+})$, for some $p > n$, be a nonnegative stable solution to $- L u = f(u)$ in $B_1^{+}$, with $u = 0$ on $\partial^0 B_1^{+}$.

Then 
\begin{equation}
\label{holder}
\|u\|_{C^{\alpha}(\overline{B_{1/2}^{+}})} \leq C \|u\|_{L^1(B_{1}^{+})} \quad \text{ if } n \leq 9,
\end{equation}
where $\alpha = \alpha(n, \elliptic, \bounded) > 0$ and $C = C(n, \elliptic, \bounded, \|\nabla a_{ij} \|_{L^{\infty}(B_{1}^{+})}, \|\vv_i \|_{L^{\infty}(B^{+}_1)})$.
\end{theorem}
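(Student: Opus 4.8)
The plan is to reduce the boundary Hölder estimate to the interior estimates of~\cite{ErnetaInterior} via a half-harmonic-type extension or a barrier/distance argument, but the main engine will be the "stability gives a Hardy-type inequality" philosophy of~\cite{CabreFigalliRosSerra}, adapted to the half-ball with variable coefficients. First I would use the stability function $\varphi>0$ solving $\jacobi\varphi\le 0$ together with the equation $-Lu=f(u)$ to derive, for every Lipschitz test function $\xi$ with $\xi=0$ near $\partial^+B_1^+$, a weighted inequality of the form
\begin{equation}
\label{plan:hardy}
\int_{B_1^+} f'(u)\,\xi^2 \,\d x \le \int_{B_1^+} a_{ij}\,\partial_i\xi\,\partial_j\xi \,\d x + (\text{lower order in }\vv_i),
\end{equation}
and then insert $\xi = c\,\eta$ where $c$ is a carefully chosen function of $u$ (in the spirit of the $\partial_e u$ or $|\nabla u|$ test functions) and $\eta$ a cutoff. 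Because $u=0$ on $\partial^0B_1^+$ but the flattened geometry contributes no curvature (only $\|\nabla a_{ij}\|_{L^\infty}$ and $\|\vv_i\|_{L^\infty}$), the boundary terms on $\{x_n=0\}$ produced by integration by parts should have a favourable sign or be absorbable, exactly as one needs; checking this sign is one of the delicate points.

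Next, the core of the argument is to upgrade \eqref{plan:hardy} into an $L^{2+\gamma}$ gradient bound near the flat boundary, i.e. the half-ball analogue of \eqref{c11:hint}. Here I would follow the dimension-dependent computation of~\cite{CabreFigalliRosSerra}: test the stability inequality with $\xi = |\nabla u|\,\eta$ (or with radial derivatives adapted to the half-space), use the Geometric/Simons-type pointwise inequality for $|\nabla u|$ coming from differentiating $-Lu=f(u)$, and exploit that for $n\le 9$ the resulting constant beats the one from the Hardy/Sobolev trace inequality on $B_1^+$. The variable coefficients force an extra Caccioppoli step to control $\|D^2 u\|_{L^2}$ in terms of $\|\nabla u\|_{L^2}$ and $\|\nabla a_{ij}\|_{L^\infty}\|\nabla u\|_{L^2}$, which is where the hypothesis $u\in W^{3,p}(B_1^+)$, $p>n$, is used — it guarantees that all the integrations by parts and the use of $\Delta$-type inequalities for $|\nabla u|$ are licit up to $\partial^0 B_1^+$. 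Once the $L^{2+\gamma}$ gradient bound holds in $B_{3/4}^+$, a Morrey/Sobolev embedding in the half-ball, combined with $u=0$ on $\partial^0B_1^+$ to handle the part of $\partial B_{1/2}^+$ on the flat boundary, yields the $C^\alpha(\overline{B_{1/2}^+})$ bound with $\alpha=\alpha(n,\elliptic,\bounded)$; the $L^1$ right-hand side appears after bootstrapping the a priori $L^1$–$L^p$ passage through the equation.

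The step I expect to be the main obstacle is precisely obtaining the \emph{quantitative} $L^{2+\gamma}$ gradient estimate up to the flat boundary with the stated, nonlinearity-independent dependence of the constant. In the interior this is the heart of~\cite{CabreFigalliRosSerra}–\cite{CabreRadial}; at the boundary one must simultaneously (i) control the boundary integrals on $\{x_n=0\}$ generated by integrating the stability inequality by parts, showing they are nonpositive or absorbable because $u$ and suitable tangential derivatives vanish there, (ii) keep every constant explicit in $\elliptic,\bounded,\|\nabla a_{ij}\|_{L^\infty},\|\vv_i\|_{L^\infty}$ rather than hiding them in a compactness argument, and (iii) verify that the borderline dimension count "$n\le 9$" survives the variable-coefficient corrections. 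I would handle (i) by an even reflection of $u$ across $\{x_n=0\}$ when the coefficients permit, or else by a direct boundary computation using $u=0$, $\partial_i u=0$ for $i<n$ on $\partial^0B_1^+$; I would handle (ii)–(iii) by tracking the Hardy constant on $B_1^+$ and the constant in the pointwise inequality for $|\nabla u|$ as in the flat quantitative proof of~\cite{CabreRadial}, which is what makes the whole scheme work in curved domains after flattening.
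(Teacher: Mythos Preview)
Your plan has a genuine gap at the step ``Once the $L^{2+\gamma}$ gradient bound holds in $B_{3/4}^+$, a Morrey/Sobolev embedding \ldots\ yields the $C^\alpha$ bound.'' Morrey's embedding $W^{1,p}\hookrightarrow C^\alpha$ requires $p>n$; the $\gamma$ one obtains from the stability/Hessian argument is a small dimensional constant (this is precisely Theorem~\ref{thm:previous}, which holds in \emph{all} dimensions), so for $n\ge 3$ one cannot have $2+\gamma>n$. In other words, the $L^{2+\gamma}$ energy estimate is not where the restriction $n\le 9$ enters, and it is not by itself strong enough to produce H\"older continuity. Your passing mention of ``radial derivatives adapted to the half-space'' is on the right track, but you do not develop it, and the scheme as written terminates at an estimate that is already known and insufficient.

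The paper's route is structurally different. The dimension count $n\le 9$ comes from testing stability not with $|\nabla u|\,\eta$ but with $\cc = x\cdot\nabla u = r\,u_r$ and the singular weight $\eta = r^{(2-n)/2}\zeta$ (Proposition~\ref{prop:rad}); this produces a weighted inequality for $\int_{B_\rho^+} r^{2-n} u_r^2$ in which the key coefficient $\frac{(n-2)(10-n)}{4}$ is positive exactly for $3\le n\le 9$. A second, independent ingredient that your outline does not contain is Proposition~\ref{prop:l1rad}, Cabr\'e's device controlling $\|u\|_{L^1}$ by $\|u_r\|_{L^1}$ on half-annuli; this is where the convexity of $f$ is used in an essential way (not merely for approximation). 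Combining the two propositions with the energy estimate and a hole-filling argument yields the decay $\int_{B_\rho^+} r^{2-n}|\nabla u|^2 \le C\rho^{2\alpha}\|u\|_{L^1}^2$, a Campanato-type condition which, together with the interior estimates and $u=0$ on $\partial^0 B_1^+$, gives $C^\alpha$. Without the radial-derivative test function and the $u\mapsto u_r$ control, the argument cannot close.
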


\begin{remark}
In contrast to Theorem~\ref{thm:c11} above, here we require additional hypotheses on the solution and the coefficients.
Namely, we need third weak derivatives of $u$ to remove the nonlinearity from the stability condition, making our bounds independent of $f$.
We further need the continuity of $\vv_i$ up to the boundary, assumption \eqref{reg:b}, to control certain surface integrals over $\partial^0 B_1^+$ appearing in the proof.

To prove the estimate in $C^{1,1}$ domains from the one in half-balls, we carry out an approximation and flattening procedure.
We consider an exhaustion by smooth domains $\Omega_k$ and, in each of them, we construct a stable solution $u_k$ to a semilinear equation with more regular coefficients.
The smoothness of the data will guarantee that $u_k$ is in $W^{3,p}(\Omega_k)$ and hence, flattening the boundary $\partial \Omega_k$, we may apply Theorem~\ref{thm:holder}.
Thanks to the $C^{1,1}$ regularity assumption, the constants in the bounds for $u_k$ in half-balls will be independent of $k$.
By convexity of $f$, the functions $u_k$ converge to the original solution and taking limits we deduce the theorem.
\end{remark}

As mentioned above, the energy estimate \eqref{c11:hint} in $C^{1,1}$ domains uses the analogous result in half-balls. 
In the following result from~\cite{ErnetaBdy1}, we obtained such a bound via Hessian estimates for stable solutions in the spirit of Sternberg and Zumbrun~\cite{SternbergZumbrun1}.
To prove Theorem~\ref{thm:holder}, we will need both the energy and Hessian estimates.
\begin{theorem}[\cite{ErnetaBdy1}]
\label{thm:previous}
Let $L$ satisfy conditions~\eqref{elliptic},~\eqref{reg:coeffs}, and~\eqref{reg:b} in $\Omega = B_1^{+}$,
and let $f \in C^1(\R)$ be nonnegative and nondecreasing.
%Assume that $f \in C^1(\R)$ is nonnegative and nondecreasing.
Let $u \in W^{3,p}(B_1^{+})$, for some $p > n$, be a nonnegative stable solution to $- L u = f(u)$ in $B_1^{+}$, with $u = 0$ on $\partial^0 B_1^{+}$.

Then 
\begin{equation}
\label{hessianw}
\||\nabla u| \, D^2 u\|_{L^{1}(B_{1/2}^{+})} \leq C \|\nabla u\|_{L^2(B_1^{+})}^2,
\end{equation}
and
\begin{equation}
\label{higherint}
\|\nabla u\|_{L^{2+ \gamma}(B_{1/2}^{+})} \leq C \|u\|_{L^1(B_1^{+})},
\end{equation}
where $\gamma = \gamma(n) > 0$ and 
$C = C(n, \elliptic, \bounded, \|\nabla a_{ij}\|_{L^{\infty}(B_{1}^{+})}, \|\vv_i \|_{L^{\infty}(B^{+}_1)})$.
\end{theorem}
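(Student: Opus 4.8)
The plan is to run a Sternberg--Zumbrun-type argument adapted to the half-ball, combining the stability inequality with a geometric Poincaré-type inequality, and to keep careful track of the boundary terms on $\partial^0 B_1^+$. The starting point is that, since $u \in W^{3,p}(B_1^+)$ with $p > n$, we may differentiate the equation $-Lu = f(u)$ and test the stability condition \eqref{stable:point} with a choice of $\varphi$ built from $|\nabla u|$. Concretely, the stable solution admits $\varphi > 0$ with $\jacobi \varphi \le 0$; a standard device (taking $\varphi$ to be the positive principal eigenfunction, or using that $\varphi$ can be taken as a limit of such) turns stability into the inequality
\begin{equation}
\label{prop:stabtest}
\int_{B_1^+} f'(u)\, \xi^2 \le \int_{B_1^+} \bigl( a_{ij} \partial_i \xi \, \partial_j \xi + (\text{lower order in } \vv)\bigr) \quad \text{for all } \xi \in C^{0,1}_c(B_1^+ \cup \partial^0 B_1^+),
\end{equation}
with no boundary contribution on $\partial^0 B_1^+$ because $u = 0$ there forces a Dirichlet condition compatible with the test functions. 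Then one chooses $\xi = \eta \, c(\nabla u)$ where $c(\nabla u)$ is an appropriate scalar function of the gradient (in the classical flat case $c = |\nabla u|$) and $\eta$ is a cutoff between $B_{1/2}^+$ and $B_1^+$; the differentiated equation lets one replace $f'(u)|\nabla u|^2$ by terms involving $L|\nabla u|$ plus commutator terms coming from $\nabla a_{ij}$ and $\vv_i$.

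The second ingredient is the geometric inequality: on the level sets of $u$, one has the pointwise identity relating $|D^2 u|^2$, $|\nabla |\nabla u||^2$, and the squared second fundamental form times $|\nabla u|^2$, which after integrating against $f'(u)\varphi$-type weights and using \eqref{prop:stabtest} yields, modulo lower-order terms,
\begin{equation}
\label{prop:geom}
\int_{B_{1/2}^+} \bigl( |D^2 u|^2 - |\nabla |\nabla u||^2 \bigr) \eta^2 \, |\nabla u|^{?} \;\le\; C \int_{B_1^+} |\nabla u|^2 \, |\nabla \eta|^2 + (\text{boundary terms on } \partial^0 B_1^+).
\end{equation}
The weight in front of $|\nabla u|$ is dictated by the need for the estimate to close; tracking it carefully is how one arrives at the weighted Hessian bound \eqref{hessianw} with the correct power. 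The key point for the variable-coefficient setting is that every commutator term that arises from moving $L$ past $\nabla$ is controlled by $\|\nabla a_{ij}\|_{L^\infty}$ and $\|\vv_i\|_{L^\infty}$ times lower powers of $|\nabla u|$ and $|D^2 u|$, which can be absorbed by Young's inequality at the cost of a constant depending only on the quantities listed in the statement. The continuity of $\vv_i$ up to $\partial^0 B_1^+$, assumption \eqref{reg:b}, is used precisely here: the integration by parts that produces \eqref{prop:geom} leaves surface integrals over $\partial^0 B_1^+$ of quantities like $\vv_n |\nabla u|^2 \eta^2$, and one needs $\vv_i$ to have a trace on this face to make sense of them and bound them (using that $\partial_\tau u = 0$ along the flat boundary, so only the normal derivative survives and can be estimated by interior quantities via a boundary Poincaré or trace inequality).

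Once \eqref{hessianw} is established, the higher-integrability bound \eqref{higherint} follows by a now-standard bootstrap: \eqref{hessianw} says $|\nabla u|^{1/2} \in$ some fractional Sobolev-type space locally, which by Sobolev embedding improves the integrability exponent of $|\nabla u|$ from $2$ to $2 + \gamma$ for a dimensional $\gamma = \gamma(n) > 0$; the passage from the $L^2$ norm of $\nabla u$ on the right of \eqref{hessianw} to the $L^1$ norm of $u$ on the right of \eqref{higherint} uses interpolation together with a boundary Caccioppoli inequality for $-Lu = f(u) \ge 0$ (here nonnegativity and monotonicity of $f$, plus $u \ge 0$ and $u = 0$ on $\partial^0 B_1^+$, give $\int_{B_{3/4}^+} |\nabla u|^2 \le C \int_{B_1^+} u^2 \le C \|u\|_{L^1} \|u\|_{L^\infty(B_1^+)}$, and one absorbs the $L^\infty$ factor via a standard interpolation-absorption argument or by iterating on a chain of shrinking half-balls). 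I expect the main obstacle to be the bookkeeping of the boundary terms on $\partial^0 B_1^+$: unlike the interior case, integrating the geometric identity by parts near the flat face produces several surface integrals, and one must verify that the Dirichlet condition $u = 0$ together with $u \ge 0$ forces enough of them to vanish or to have a favorable sign, while the remaining ones are estimated using that tangential derivatives of $u$ vanish on $\partial^0 B_1^+$; getting the signs right (so that terms are absorbed rather than added) is the delicate part, and it is where the half-space geometry — as opposed to a general curved boundary — is genuinely exploited.
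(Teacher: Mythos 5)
The paper does not actually prove Theorem~\ref{thm:previous} here: it is quoted from the companion work \cite{ErnetaBdy1}, and the present article only records that the proof goes through Sternberg--Zumbrun-type Hessian estimates. Your outline follows that same strategy, and you correctly identify where hypothesis \eqref{reg:b} enters (surface integrals over $\partial^0 B_1^+$). Two steps, however, would fail as written.

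First, your opening display asserts that stability yields the quadratic-form inequality for every Lipschitz $\xi$ compactly supported in $B_1^+\cup\partial^0 B_1^+$, ``with no boundary contribution on $\partial^0 B_1^+$.'' That is false: stability is the nonnegativity of the principal eigenvalue with Dirichlet conditions on all of $\partial B_1^+$, so it gives the form inequality only for $\xi$ vanishing on $\partial^0 B_1^+$ as well --- which is exactly why the paper's boundary stability inequality \eqref{stab:half:jac} is restricted to $\cc=0$ on $\partial^0 B_1^+$. Allowing $\xi$ to be free on $\partial^0$ amounts to nonnegativity of the (smaller) mixed Dirichlet--Neumann eigenvalue, which does not follow from stability. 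Since $|\nabla u|=|u_n|$ on $\partial^0 B_1^+$ is generally nonzero, the boundary Sternberg--Zumbrun argument must either test with tangential derivatives $u_e$ (which do vanish on $\partial^0$) and treat $u_n$ separately via the equation restricted to $\{x_n=0\}$, or explicitly compute the surface term generated by $\xi=|\nabla u|\eta$ and show it is controlled; it is in that computation, not in a free extension of the test-function class, that $\vv_i\in C^0(\overline{B_1^+})$ is used. (You partly acknowledge this at the end, but it contradicts your display, and the sign/control of these terms is the whole content of the boundary case.) Second, your passage from $\|\nabla u\|_{L^2(B_1^+)}$ to $\|u\|_{L^1(B_1^+)}$ via $\int|\nabla u|^2\le C\|u\|_{L^1}\|u\|_{L^\infty}$ followed by ``absorbing the $L^\infty$ factor'' is circular: no a priori $L^\infty$ bound is available (none holds for $n\ge 10$), and $W^{1,2+\gamma}\not\hookrightarrow L^\infty$ once $n\ge 4$, so the absorption cannot be run. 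The standard repair interpolates the gradient between $L^{2+\gamma}$ and $L^{1}$ (cf.\ Proposition~\ref{interpol} and Corollary~\ref{cor:interpol}), uses that $f(u)\ge 0$ yields $\|\nabla u\|_{L^1}\le C\|u\|_{L^1}$ on smaller half-balls, and iterates on a chain of shrinking radii. Your derivation of the integrability gain itself is fine once stated precisely: \eqref{hessianw} controls $\nabla(|\nabla u|^2)$ in $L^1$, so $W^{1,1}\hookrightarrow L^{n/(n-1)}$ applied to $|\nabla u|^2$ gives $\gamma=2/(n-1)$; no fractional spaces are needed.
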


\subsection{Application: Regularity of the extremal solution in $C^{1,1}$ domains}
\label{subsection:gelfand}

Let $f \colon [0, +\infty) \to \R$ satisfy $f(0) > 0$ and be nondecreasing, convex, and superlinear at $+\infty$,
meaning that
\[
\lim_{u \to +\infty} \frac{f(u)}{u} = + \infty.
\]
Given a constant $\lambda > 0$, we consider the problem
\begin{equation}\label{pb:gelfand}
\left\{
\begin{array}{cl}
- L u = \lambda f(u) & \text{ in } \Omega\\
u > 0 & \text{ in } \Omega\\
u = 0 & \text{ on } \partial \Omega,
\end{array}
\right.
\end{equation}
where $\Omega \subset \R^n$ is a $C^{1,1}$ bounded domain, and $L$ satisfies conditions~\eqref{elliptic} and~\eqref{reg:coeffs} in $\Omega$.

The boundary value problem~\eqref{pb:gelfand} is the \emph{Gelfand problem} mentioned above.
It was first presented by Barenblatt in a volume edited by Gelfand~\cite{Gelfand}.
Originally, \eqref{pb:gelfand} was introduced to study ignition and explosion phenomena in the theory of thermal combustion.
In that framework, $u$ can be understood as the temperature of a combustible mixture, 
while $\lambda$ measures the relative strength of the reaction $f(u)$ with respect to the diffusion-advection processes modeled by $L$.
When $\lambda$ is large, solutions are not expected to exist, which is interpreted as the occurrence of an explosion.

Stable solutions play a prominent role in the Gelfand problem, as evidenced by the next proposition below.
For an account of the history and references for \eqref{pb:gelfand}, we refer the reader to the monograph of Dupaigne~\cite{Dupaigne}.
Here, instead, we only recall a basic, well-known result concerning the existence of solutions to~\eqref{pb:gelfand}.

Note that by the identification $a_{ij} \in C^{0,1}(\overline{\Omega}) = W^{1, \infty}(\Omega)$ we can always write our operator $L$ in divergence form 
\begin{equation}
\label{op:nondiv}
L u = \div \left( A(x) \nabla u\right) + \bb(x) \cdot \nabla u,
\end{equation}
where $\bb(x) = (\bb_{i}(x))$ is the vector field given by 
\begin{equation}
\label{def:bvector}
\bb_i(x) = \vv_i(x) - \partial_k a_{ki}(x). 
\end{equation}
In particular, since we always assume that $\partial_k a_{ij} \in L^{\infty}(\Omega)$ and $\vv_i \in L^{\infty}(\Omega)$, we also have $\bb_i \in L^{\infty}(\Omega)$.
The following result for non-variational problems has appeared in a slightly different form in~\cite{BerestyckiKiselevNovikovRyzhik,CostadeSouzaMontenegro}.
For the classical variational version, see, for instance,~\cite{Brezis-IFT, Dupaigne}.

\begin{proposition}
There exists a constant $\lambda^{\star} \in (0, +\infty)$ such that:
\begin{itemize}
\item[\rm{(i)}] For each $\lambda \in (0, \lambda^{\star})$ there is a unique strong stable solution $u_{\lambda} \in C^0(\overline{\Omega}) \cap W^{2,n}_{\rm loc}(\Omega)$ of \eqref{pb:gelfand}.
Moreover, we have $u_{\lambda} < u_{\lambda'}$ in $\Omega$ for $\lambda < \lambda'$.
\item[\rm{(ii)}] For $\lambda > \lambda^{\star}$ there is no strong solution.
\end{itemize}
Assume moreover that
\begin{equation}
\label{muchreg}
\div \, \bb \in L^{\infty}(\Omega),
\end{equation}
so that the adjoint operator
\[
L^{T} \zeta = 
\div\left( A(x) \nabla \zeta\right) - \bb(x) \cdot \nabla \zeta - \div \, \bb(x) \, \zeta
\]
is well defined for $\zeta \in W^{2,n}_{\rm loc}(\Omega)$ and has bounded coefficients.
Then:
\begin{enumerate}[label={\rm (iii)}]
\item \label{more:cond}
For $\lambda = \lambda^{\star}$ there exists a unique $L^1$-weak solution $u^{\star}$, in the following sense:
$u^{\star} \in L^{1}(\Omega)$, $f(u^{\star}){\rm dist}(\cdot, \partial \Omega) \in L^1(\Omega)$, and
\[
- \int_{\Omega} u^{\star} L^{T} \zeta \d x = \lambda^{\star} \int_{\Omega} f(u^{\star}) \zeta \d x 
%\quad \text{ for all } \zeta \in W^{2,n}(\Omega) \text{ with } L^T \zeta \in L^{\infty}(\Omega) \text{ and } \zeta|_{\partial \Omega} = 0.
\]
for all $\zeta \in W^{2,n}(\Omega)$ with $L^T \zeta \in L^{\infty}(\Omega)$ and $\zeta|_{\partial \Omega} = 0$.

The solution $u^{\star}$ is called the \emph{extremal solution} of \eqref{pb:gelfand} and satisfies $u_{\lambda} \uparrow u^{\star}$ as $\lambda \uparrow \lambda^{\star}$.
\end{enumerate}
\end{proposition}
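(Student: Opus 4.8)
\emph{The plan} is to run the classical sub/supersolution construction for the Gelfand problem, replacing integration by parts with duality against the adjoint operator $L^{T}$, which is legitimate under \eqref{elliptic}--\eqref{reg:coeffs} (and, for part (iii), under \eqref{muchreg}). Write $d(x)={\rm dist}(x,\partial\Omega)$; let $\phi_{0}\in W^{2,p}(\Omega)$ (all $p<\infty$) solve $-L\phi_{0}=1$ in $\Omega$, $\phi_{0}=0$ on $\partial\Omega$; and let $(\lambda_{1},\varphi_{1})$ be the principal eigenpair of $-L^{T}$ with $\varphi_{1}>0$ in $\Omega$. By the maximum principle and the Hopf lemma, $\phi_{0}\asymp d$ and $\varphi_{1}\asymp d$. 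I also record two consequences of convexity and superlinearity: $f'$ is nondecreasing with $f'(t)\to+\infty$, so that for every $M>0$ there is $C_{M}\geq 0$ with $f(t)\geq Mt-C_{M}$ for all $t\geq 0$ (a supporting line at a point where $f'\geq M$); and for every $\varepsilon>0$ there is $C_{\varepsilon}$ with $t\leq\varepsilon f(t)+C_{\varepsilon}$ for all $t\geq 0$.

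\emph{Existence, monotonicity, and nonexistence (parts (i)--(ii)).} Let $\Lambda$ be the set of $\lambda>0$ for which \eqref{pb:gelfand} admits a strong solution. For $\lambda\leq 1/f(\|\phi_{0}\|_{L^{\infty}})$ the pair $(0,\phi_{0})$ is an ordered sub/supersolution, so the monotone iteration $-Lu^{k+1}+Cu^{k+1}=\lambda f(u^{k})+Cu^{k}$ (with $C$ large) converges to a solution; hence $\Lambda\neq\emptyset$. If $\lambda'\in\Lambda$ with solution $u_{\lambda'}$ and $0<\lambda<\lambda'$, then $u_{\lambda'}$ is a supersolution at $\lambda$ (since $f\geq 0$), and iterating upward from $0$ produces the minimal nonnegative solution $u_{\lambda}\leq u_{\lambda'}$; it is bounded, hence in $W^{2,p}(\Omega)$ for all $p$ by elliptic regularity, and strictly positive by the strong maximum principle since $-Lu_{\lambda}\geq\lambda f(0)>0$. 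Comparing minimal solutions and using $-L(u_{\lambda'}-u_{\lambda})\geq(\lambda'-\lambda)f(0)>0$ gives the strict monotonicity $u_{\lambda}<u_{\lambda'}$. Thus $\Lambda$ is an interval with left endpoint $0$; set $\lambda^{\star}:=\sup\Lambda$. Finiteness of $\lambda^{\star}$: testing $-Lu=\lambda f(u)$ against $\varphi_{1}$ yields $\lambda_{1}\int_{\Omega}u\varphi_{1}=\lambda\int_{\Omega}f(u)\varphi_{1}\geq\lambda M\int_{\Omega}u\varphi_{1}-\lambda C_{M}\int_{\Omega}\varphi_{1}$, while comparison with $\phi_{0}$ gives $u\geq\lambda f(0)\phi_{0}$, i.e.\ $\int_{\Omega}u\varphi_{1}\geq\lambda f(0)\int_{\Omega}\phi_{0}\varphi_{1}>0$; fixing any $M$ and combining these forces an explicit upper bound on $\lambda$, so $\lambda^{\star}<\infty$ and (ii) holds.

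\emph{Semi-stability and uniqueness of the stable solution (completing (i)).} Suppose the principal eigenvalue $\mu$ of $-(L+\lambda f'(u_{\lambda}))$ were negative, with positive eigenfunction $\varphi$. Using $-L\varphi=\lambda f'(u_{\lambda})\varphi+\mu\varphi$, the monotonicity of $f'$, and its continuity, one checks that $w_{t}:=u_{\lambda}-t\varphi$ satisfies $-Lw_{t}-\lambda f(w_{t})\geq t\varphi\bigl(|\mu|-\lambda\,\omega(t\|\varphi\|_{L^{\infty}})\bigr)\geq 0$ for $t>0$ small, where $\omega$ is a modulus of continuity of $f'$; moreover, by the Hopf lemma $u_{\lambda}\geq c\,d\geq t\varphi$, so $0\leq w_{t}<u_{\lambda}$. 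Then $(0,w_{t})$ is an ordered sub/supersolution pair and monotone iteration yields a solution strictly below $u_{\lambda}$, contradicting minimality; hence $\mu\geq 0$. For uniqueness, if $v$ is another stable solution then $u_{\lambda}\leq v$ by minimality, and $w:=v-u_{\lambda}\geq 0$ obeys $(-L-\lambda f'(v))w\leq 0$ by convexity ($f(v)-f(u_{\lambda})\leq f'(v)w$), with $w=0$ on $\partial\Omega$; since the principal eigenvalue of $-(L+\lambda f'(v))$ is $\geq 0$, the maximum principle gives $w=0$ when it is positive, and when it is zero the principal-eigenvalue characterization forces $w$ to be a positive multiple of the eigenfunction, making the equation linear on the range of $u_{\lambda}$ and contradicting either the semi-stability of $u_{\lambda}$ via the Fredholm alternative (using $f(0)>0$) or giving $v=u_{\lambda}$ outright.

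\emph{The extremal solution (part (iii)) and the main obstacle.} Assume now \eqref{muchreg}, so $L^{T}$ has bounded coefficients and the torsion function $\psi$ with $-L^{T}\psi=1$, $\psi|_{\partial\Omega}=0$, exists with $\psi\asymp d$. From $t\leq\varepsilon f(t)+C_{\varepsilon}$ together with $\lambda_{1}\int u_{\lambda}\varphi_{1}=\lambda\int f(u_{\lambda})\varphi_{1}$, choosing $\varepsilon$ small relative to $\lambda^{\star}$, one obtains $\int_{\Omega}f(u_{\lambda})\varphi_{1}\leq C$ uniformly in $\lambda<\lambda^{\star}$; since $\varphi_{1}\asymp d$ this gives $\int_{\Omega}f(u_{\lambda})\,d\,dx\leq C$, and testing against $\psi$ gives $\int_{\Omega}u_{\lambda}\,dx=\lambda\int_{\Omega}f(u_{\lambda})\psi\leq C$. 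By monotone convergence $u^{\star}:=\lim_{\lambda\uparrow\lambda^{\star}}u_{\lambda}$ lies in $L^{1}(\Omega)$ and $f(u^{\star})\,d\in L^{1}(\Omega)$. Writing the weak formulation for $u_{\lambda}$ tested against an admissible $\zeta$ (which satisfies $|\zeta|\leq C\,d$ since $L^{T}\zeta\in L^{\infty}$ and $\zeta|_{\partial\Omega}=0$) and letting $\lambda\uparrow\lambda^{\star}$ — dominated convergence on the left, monotone convergence on the right — produces the $L^{1}$-weak formulation for $u^{\star}$ at $\lambda^{\star}$, and $u_{\lambda}\uparrow u^{\star}$. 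Uniqueness of the $L^{1}$-weak solution then follows from the standard Martel--Brezis-type argument: any $L^{1}$-weak solution is a weak supersolution for every $\lambda<\lambda^{\star}$, hence $\geq u_{\lambda}$ and so $\geq u^{\star}$, while a strict inequality on a set of positive measure would, by convexity, yield a supersolution for some $\lambda>\lambda^{\star}$, contradicting the maximality of $\lambda^{\star}$. I expect the two genuinely delicate points to be the semi-stability of the minimal solution — because of the boundary control of $w_{t}$ under only $C^{1,1}$ data — and the uniqueness of the $L^{1}$-weak solution, which is the reason the extra hypothesis \eqref{muchreg} is imposed: it is precisely what makes $L^{T}$ and the weak formulation meaningful and allows the comparison argument to be run. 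The remainder is a routine transcription of the variational proof with $L^{T}$ replacing integration by parts.
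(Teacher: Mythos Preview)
The paper does not actually prove this proposition: it cites \cite{BerestyckiKiselevNovikovRyzhik,CostadeSouzaMontenegro} for the nonvariational version and \cite{Brezis-IFT,Dupaigne} for the classical one, and the two remarks following the statement sketch only the a priori bounds behind part~(iii). Your argument is precisely the standard route those references take, and for part~(iii) it matches the paper's own sketch line for line --- testing against the principal eigenfunction of $L^{T}$ to get $\int f(u_\lambda)\,d\leq C$, and against the torsion function of $L^{T}$ to get $\int u_\lambda\leq C$. Your treatment of uniqueness of the stable solution is also the same idea the paper develops in Appendix~\ref{app:uniqueness} (Proposition~\ref{prop:uniqueness}), specialized to the case $u_\lambda\leq v$.

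One genuine technical wrinkle: you invoke the principal eigenpair $(\lambda_1,\varphi_1)$ of $-L^{T}$ already in part~(ii) to show $\lambda^\star<\infty$, but hypothesis~\eqref{muchreg} --- which is exactly what guarantees $L^{T}$ has bounded coefficients --- is assumed only for part~(iii). Under \eqref{elliptic}--\eqref{reg:coeffs} alone, $\div\,\bb$ is merely a distribution, so the duality $\int(-Lu)\varphi_1=\lambda_1\int u\varphi_1$ is not immediately available. An easy fix that stays within the hypotheses of (i)--(ii): from $u_\lambda\geq\lambda f(0)\phi_0$ and superlinearity of $f$ you get $f'(u_\lambda)\to\infty$ locally uniformly as $\lambda\to\infty$; but stability of $u_\lambda$ gives $\mu_1[L+\lambda f'(u_\lambda),\Omega']\geq 0$ for every $\Omega'\Subset\Omega$, which fails once $\lambda f'(u_\lambda)>\mu_1[L,\Omega']$ on $\Omega'$. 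This uses only the BNV principal eigenvalue of $L$ itself, not of $L^{T}$.
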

\begin{remark}
The uniqueness of $u^{\star}$ is due to Martel~\cite{Martel}.
Although he proved it in the model case $L = \Delta$,
the same ideas extend to the operators considered in this paper.
\end{remark}

\begin{remark}
The additional regularity~\eqref{muchreg} of the drift $\bb_i$ is needed in \ref{more:cond} to guarantee that 
$u^{\star} \in L^1(\Omega)$ and $f(u^{\star}){\rm dist}(\cdot, \partial \Omega) \in L^1(\Omega)$.
For this, testing the equation with the principal eigenfunction $\phi$ of $L^T$, by superlinearity of $f$, it is easy to show that
\[
\int_{\Omega} f(u_{\lambda}) \phi \d x \leq C \quad \text{ for } \lambda \in (0, \lambda^{\star}),
\]
where $C$ does not depend on $\lambda$.
By regularity 
$\phi \in W^{2, p}(\Omega)$ for all $p < \infty$, hence 
$\phi \in C^1(\overline{\Omega})$,
and by maximum principle $\phi \geq c \, {\rm dist}(\cdot, \partial\Omega)$,
whence $f(u^{\star}){\rm dist}(\cdot, \partial \Omega) \in L^1(\Omega)$.
Now, testing the equation with the unique solution to $- L^T \exit = 1$ in $\Omega$, $\exit = 0$ on $\partial \Omega$, we~also~have
\[
\int_{\Omega} u_{\lambda} \d x = \lambda \int_{\Omega} f(u_{\lambda}) \exit \d x \quad \text{ for } \lambda \in (0, \lambda^{\star}).
\]
Again, by regularity $\exit \leq C \phi$, and using the inequality above, we conclude that $u^{\star} \in L^1(\Omega)$.
\end{remark}

Since, a priori, the extremal solution $u^{\star}$ is only in $L^1(\Omega)$,
it is natural to investigate further regularity properties.
In this direction, 
Brezis and V\'{a}zquez~\cite[Problem~1]{BrezisVazquez} 
asked whether the extremal solution for the model operator $L = \Delta$ was always in $W^{1,2}_{0}(\Omega)$,
this being the natural energy space of the variational problem.
Similarly, as explained above, Brezis~\cite[Open problem~1]{Brezis-IFT} 
asked if the extremal solution $u^{\star}$ was always bounded for $n \leq 9$.
Both of these questions were answered positively by Cabr\'{e}, Figalli, Ros-Oton, and Serra~\cite{CabreFigalliRosSerra}
for the Laplacian in $C^3$ domains.
For this, they apply their a priori estimates to the classical stable solutions $\{u_{\lambda}\}_{\lambda < \lambda^{\star}}$ and, using that they are bounded in $L^1(\Omega)$, they pass to the limit as $\lambda \uparrow \lambda^{\star}$.
By this same procedure, our main theorem, Theorem~\ref{thm:c11},
extends their result to operators with coefficients in $C^{1,1}$ domains:
\begin{corollary}
Let $\Omega \subset \R^n$ be a bounded domain of class $C^{1,1}$, let $L$ satisfy conditions~\eqref{elliptic}, \eqref{reg:coeffs}, and~\eqref{muchreg} in $\Omega$, and let $f \in C^1(\R)$ be positive, nondecreasing, convex, and superlinear at $+\infty$.

%Let $\Omega \subset \R^n$ be a bounded domain of class $C^{1,1}$ 
%and let $L$ satisfy conditions~\eqref{elliptic}, \eqref{reg:coeffs}, and~\eqref{muchreg} in $\Omega$.
%Assume that $f \in C^1(\R)$ is positive, nondecreasing, convex, and superlinear at $+\infty$.

Then the extremal solution $u^{\star}$ to \eqref{pb:gelfand} is in $W^{1,2+\gamma}_{0}(\Omega)$
for some $\gamma = \gamma(n) > 0$.
Moreover, if $n \leq 9$, then $u^{\star}$ is bounded (and hence is a strong solution 
in $W^{2,p}(\Omega)$ for all $p < \infty$).
\end{corollary}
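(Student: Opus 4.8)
The plan is to apply the global a priori estimates of Theorem~\ref{thm:c11} to the family of minimal stable solutions $\{u_\lambda\}_{\lambda \in (0,\lambda^{\star})}$ furnished by the Proposition above, and then to let $\lambda \uparrow \lambda^{\star}$. Fix $\lambda \in (0,\lambda^{\star})$: by part~(i) of the Proposition, $u_\lambda \in C^0(\overline\Omega) \cap W^{2,n}_{\rm loc}(\Omega)$ is a nonnegative stable solution of $-L u_\lambda = \lambda f(u_\lambda)$ in $\Omega$ with $u_\lambda = 0$ on $\partial\Omega$. Since the nonlinearity $\lambda f$ is again in $C^1(\R)$ and nonnegative, nondecreasing, and convex, Theorem~\ref{thm:c11} applies to $u_\lambda$; and because the constants there depend neither on $f$ nor on $u$, this yields, with $\gamma = \gamma(n)$ and $C = C(\Omega,n,\elliptic,\bounded,\|\nabla a_{ij}\|_{L^{\infty}(\Omega)},\|b_i\|_{L^{\infty}(\Omega)})$ \emph{independent of} $\lambda$,
\[
\|\nabla u_\lambda\|_{L^{2+\gamma}(\Omega)} \le C \|u_\lambda\|_{L^1(\Omega)}, \qquad \text{and, if } n \le 9,\quad \|u_\lambda\|_{C^{\alpha}(\overline\Omega)} \le C \|u_\lambda\|_{L^1(\Omega)}.
\]

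The second ingredient is a uniform $L^1$ bound: by part~\ref{more:cond} of the Proposition (and the remark following it) one has $u_\lambda \uparrow u^{\star}$ with $u^{\star} \in L^1(\Omega)$, so $\|u_\lambda\|_{L^1(\Omega)} \le \|u^{\star}\|_{L^1(\Omega)} =: M$ for all $\lambda < \lambda^{\star}$. Since $u_\lambda \in C^0(\overline\Omega)$ vanishes on $\partial\Omega$ and has gradient in $L^{2+\gamma}(\Omega)$, it lies in $W^{1,2+\gamma}_0(\Omega)$, and combining the displayed estimate with the Poincar\'e inequality makes $\{u_\lambda\}_{\lambda<\lambda^{\star}}$ bounded in $W^{1,2+\gamma}_0(\Omega)$ uniformly in $\lambda$.

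Then I would pass to the limit along a sequence $\lambda_k \uparrow \lambda^{\star}$. By reflexivity of $W^{1,2+\gamma}_0(\Omega)$ a subsequence converges weakly there; since $u_{\lambda_k} \to u^{\star}$ in $L^1(\Omega)$ by monotone convergence, the weak limit must be $u^{\star}$, and weak closedness of $W^{1,2+\gamma}_0(\Omega)$ gives $u^{\star} \in W^{1,2+\gamma}_0(\Omega)$, the first assertion. For $n \le 9$, the uniform $C^{\alpha}(\overline\Omega)$ bound together with Arzel\`a--Ascoli produces a subsequence converging uniformly on $\overline\Omega$, necessarily to $u^{\star}$, so $u^{\star} \in C^{\alpha}(\overline\Omega) \subset L^{\infty}(\Omega)$. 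Once boundedness is known, $0 \le f(u_\lambda) \le f(\|u^{\star}\|_{L^{\infty}(\Omega)})$ uniformly in $\lambda$, so the interior and boundary $W^{2,p}$ estimates for $L$ in the $C^{1,1}$ domain $\Omega$ (available since $a_{ij}\in C^{0,1}(\overline\Omega)$ and $b_i\in L^{\infty}(\Omega)$) bound $\{u_\lambda\}$ in $W^{2,p}(\Omega)$ for all $p<\infty$; hence $u^{\star} \in W^{2,p}(\Omega)$ and, letting $\lambda_k\uparrow\lambda^{\star}$ in $-Lu_{\lambda_k}=\lambda_k f(u_{\lambda_k})$, $u^{\star}$ solves $-Lu^{\star} = \lambda^{\star} f(u^{\star})$ in the strong sense.

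All the real work sits in Theorem~\ref{thm:c11}; here the only things to be careful about are the $\lambda$-independence of the constants — which is automatic, since those estimates do not see the nonlinearity — and the identification of the weak, uniform, and $W^{2,p}$-weak limits with the extremal solution $u^{\star}$, which follows from the monotonicity $u_\lambda \uparrow u^{\star}$ and the uniqueness statement in the Proposition. I do not anticipate any genuinely delicate obstacle in this corollary.
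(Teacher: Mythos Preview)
Your proposal is correct and follows exactly the approach the paper indicates just before stating the corollary: apply the a~priori estimates of Theorem~\ref{thm:c11} to the stable solutions $\{u_\lambda\}_{\lambda<\lambda^{\star}}$, use the uniform $L^1$ bound coming from $u_\lambda\uparrow u^{\star}\in L^1(\Omega)$, and pass to the limit. The paper does not spell out the compactness and identification-of-limits steps you wrote, but they are the standard ones and your argument is sound.
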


\subsection{Structure of the proof}

By approximation, it will suffice to prove Theorem~\ref{thm:c11} in smooth domains.
Flattening the boundary, we further reduce the problem to half-balls
and, hence, the core of the proof is to show the H\"{o}lder estimate in Theorem~\ref{thm:holder}.
Moreover,
by a scaling and covering argument, 
we may assume that the operator $L$ is close to the Laplacian, i.e., the coefficients satisfy $A(0) = \id$ and $\|D A \|_{L^{\infty}} + \|\vv\|_{L^{\infty}} \leq \varepsilon$, with $\varepsilon$ small.

As in our previous works~\cite{ErnetaInterior, ErnetaBdy1},
to obtain a priori estimates,
we will use the stability of the solution via a more convenient integral inequality.
Following the notation in those papers,
we denote the norm induced by the positive definite matrix $A(x) = (a_{ij}(x))$ by
\[ |p|_{A(x)} := \left( a_{ij}(x) p_i p_j\right)^{1/2} \quad \text{ for } p \in \R^n. \]
In~\cite{ErnetaInterior}, we showed that if $u$ is stable, 
then 
\begin{equation}
\label{ineq:stable}
\int_{\Omega} f'(u) \xi^2 \d x 
\leq \int_{\Omega} 
\left|\nabla \xi-{\textstyle\frac{1}{2}} \xi A^{-1}(x) \bb(x)\right|^2_{A(x)}
\d x 
\quad \text{ for all } \xi \in C^\infty_c(\Omega),
\end{equation}
where the vector field $\bb(x) = (\bb_i(x))$
has been introduced in \eqref{def:bvector} above.

Using the integral stability inequality~\eqref{ineq:stable}
and thanks to the energy and Hessian estimates in Theorem~\ref{thm:previous},
we will be able to prove two key auxiliary results:
Propositions~\ref{prop:rad} and~\ref{prop:l1rad} below, which we comment on next.
Combined, they will yield the H\"{o}lder estimate~\eqref{holder} in~Theorem~\ref{thm:holder}.

The first proposition provides a weighted $L^2$ estimate for the radial derivative
\[
u_r = \frac{x}{|x|} \cdot \nabla u \quad (r = |x|)
\]
in a half-ball by the $L^2$ norm of the full gradient in a half-annulus,
under a smallness condition on the coefficients.
This bound requires $n \leq 9$ and will be essential in the proof of the H\"{o}lder regularity of stable solutions.
Here and throughout this work, a constant depending only on $n$, $\elliptic$, and $\bounded$ will be called \emph{universal}.

\begin{proposition}
\label{prop:rad}
Let $u \in W^{3,p}(B_1^{+})$,
for some $p > n$,
be a nonnegative stable solution of 
$- L u = f(u)$ in $B_{1}^{+}$, with $u = 0$ on $\partial^0 B_{1}^{+}$.
Assume that $f \in C^1(\R)$ is nonnegative and nondecreasing.
Assume that $L$ satisfies conditions~\eqref{elliptic},~\eqref{reg:coeffs}, and~\eqref{reg:b} in $\Omega = B_1^{+}$.

If $3 \leq n \leq 9$ and
\[
\|D A\|_{L^{\infty}(B_{1}^{+})} + \|\vv\|_{L^{\infty}(B_{1}^{+})} \leq \varepsilon_0,
\]
then 
\[
\int_{B_{\rho}^{+}} r^{2-n} u_r^2 \d x \leq C \int_{B_{2\rho}^{+} \setminus B_{\rho}^{+}} r^{2-n} |\nabla u|^2 \d x
+ C \varepsilon \int_{B_{4 \rho}^{+}} r^{3-n} |\nabla u|^2 \d x
\]
for all $\rho \leq 1/4$, where $\varepsilon_0 > 0$ and $C$ are universal constants.
%and that
%\[
%\|D A\|_{L^{\infty}(B_{1}^{+})} + \|\vv\|_{L^{\infty}(B_{1}^{+})} \leq \varepsilon_0
%\]
%for some $\varepsilon_0 > 0$.
%
%If $3 \leq n \leq 9$ and $\varepsilon \leq \varepsilon_0$, then
%\[
%\int_{B_{\rho}^{+}} r^{2-n} u_r^2 \d x \leq C \int_{B_{2\rho}^{+} \setminus B_{\rho}^{+}} r^{2-n} |\nabla u|^2 \d x
%+ C \varepsilon \int_{B_{4 \rho}^{+}} r^{3-n} |\nabla u|^2 \d x
%\]
%for all $\rho \leq 1/4$, where $\varepsilon_0 > 0$ and $C$ are universal constants.
\end{proposition}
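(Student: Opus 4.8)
The plan is to test the integral stability inequality \eqref{ineq:stable} with a carefully chosen competitor $\xi$ that isolates the radial derivative $u_r$, and then to bound the left-hand side from below using the equation. The natural choice, inspired by the flat-domain arguments of \cite{CabreFigalliRosSerra, CabreRadial}, is to take $\xi = \eta(x) c(r) u_r$, where $\eta$ is a cutoff equal to $1$ on $B_\rho^+$ and supported in $B_{2\rho}^+$, and $c(r) = r^{\beta}$ is a power chosen so that the main term produced by the right-hand side has the critical homogeneity $r^{2-n}$; in dimension $n \le 9$ the algebra closes precisely because $\beta = (2-n)/2$ lies in the admissible range dictated by Hardy's inequality on the half-ball (this is where the restriction $n \le 9$, and $n \ge 3$ for the weight $r^{2-n}$ to be singular, enters). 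Since $u$ vanishes on $\partial^0 B_1^+$, so does $u_r = (x/|x|)\cdot\nabla u$ on the flat part, so $\xi$ is an admissible test function once we mollify; the $W^{3,p}$ regularity of $u$ guarantees $u_r \in W^{2,p}$, which is what makes all the integrations by parts below legitimate.

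The key computation is to differentiate the equation $-Lu = f(u)$ in the radial direction. Applying the operator $x\cdot\nabla$ to $a_{ij}u_{ij} + b_i u_i + f(u) = 0$ produces an equation of the form $L(u_r) + f'(u) u_r = (\text{commutator terms})$, where the commutator involves $\nabla A$, $\nabla b$, and lower-order pieces — all controlled by $\varepsilon$ after the scaling/covering reduction, except for one structural term coming from $[x\cdot\nabla, a_{ij}\partial_{ij}]$ acting on $u$, which when $A(0)=I$ contributes a benign multiple of the Laplacian-type second derivatives. Multiplying this equation by $r^{2-n}\eta^2 u_r$ (or rather by $c(r)^2\eta^2 u_r$) and integrating by parts, the term $\int f'(u)\,\xi^2$ appears on both sides: on one side from the stability inequality, on the other from the radial derivative of the equation. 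Subtracting, the $f'(u)$ terms cancel — this is the whole point, and it is why we need $u_r$ rather than $u$ itself in the test function — leaving an inequality purely in terms of $\nabla u$, $D^2 u$, and the coefficients. The boundary integrals over $\partial^0 B_1^+$ that arise in these integrations by parts are where assumption \eqref{reg:b} ($b_i \in C^0(\overline{B_1^+})$) is used; on the flat boundary $u = u_r = 0$, and one checks that the surviving surface terms have a favorable sign or are absorbed.

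After the cancellation, the right-hand side of the resulting inequality contains: (a) a boundary-annulus term $\int_{B_{2\rho}^+\setminus B_\rho^+} r^{2-n}|\nabla u|^2$ coming from where $\nabla\eta$ is supported, which is exactly the first term in the claimed estimate; (b) terms involving $|\nabla u|\,|D^2 u|$ with the weight $r^{3-n}$ and a factor $\varepsilon$, which we handle by the weighted Cauchy–Schwarz/Young inequality together with the Hessian estimate \eqref{hessianw} from Theorem~\ref{thm:previous}, yielding the $C\varepsilon\int_{B_{4\rho}^+} r^{3-n}|\nabla u|^2$ term (the radius $4\rho$ rather than $2\rho$ appears because the Hessian estimate and Hardy's inequality need a slightly larger ball to control $D^2 u$ and the extra power of $r$); and (c) a term of the form $\int r^{2-n}u_r^2$ with a small constant, which is absorbed into the left-hand side provided $\varepsilon \le \varepsilon_0$ and the Hardy constant is favorable — this absorption is exactly what forces $n \le 9$. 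I expect the main obstacle to be item (b): controlling the weighted $D^2u$ contributions uniformly down to $r=0$. The weight $r^{3-n}$ is borderline, and one must interpolate between the Hessian bound \eqref{hessianw} (which is an $L^1$ bound on $|\nabla u|D^2u$, not $L^2$) and the higher integrability \eqref{higherint} of $\nabla u$; getting the $\varepsilon$-smallness to survive this interpolation, rather than just a universal constant, requires tracking the coefficient dependence carefully through a dyadic decomposition in $r$ and summing a geometric series — the summability of that series is again a dimension-dependent condition that is consistent with $n \le 9$.
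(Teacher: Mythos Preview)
Your approach is correct and essentially identical to the paper's: test the stability inequality in the form \eqref{stab:half:jac} with $\cc = x\cdot\nabla u$ and $\eta = r^{(2-n)/2}\zeta$, then control the resulting Hessian errors with \eqref{hessianw} via a dyadic decomposition.

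Two clarifications on the points you flagged as uncertain. First, the dimension restriction $3\le n\le 9$ does not enter through an absorption of a small-constant $\int r^{2-n}u_r^2$ term as in your item~(c); rather, after the algebra the left-hand side carries the explicit coefficient $\tfrac{(n-2)(10-n)}{4}$ in front of $\int r^{2-n}u_r^2$, which is positive precisely in this range---no absorption or Hardy constant is needed here. Second, your worry in item~(b) is overblown: the Hessian error appears as $C\varepsilon\int_{B_{2\rho}^+} r^{4-n}|D^2u||\nabla u|\,\d x$, and a direct dyadic application of \eqref{hessianw} on annuli $B_{\delta/2}^+\setminus B_{\delta/4}^+$ (summed over $\delta = 2^{3-k}\rho$) converts this to $C\varepsilon\int_{B_{4\rho}^+} r^{3-n}|\nabla u|^2\,\d x$ with no interpolation against \eqref{higherint} and no additional dimension-dependent summability condition---the $\varepsilon$ factor simply rides along. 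Finally, note that $A(0)=\id$ is not assumed in this proposition (it is only needed in Proposition~\ref{prop:l1rad}).
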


Although Proposition~\ref{prop:rad} requires $n \geq 3$, 
adding superfluous variables, we will be able to use it to prove the $C^{\alpha}$ estimate when $n \leq 2$ as well.

To prove Proposition~\ref{prop:rad}, we use the integral stability inequality~\eqref{ineq:stable} with appropriate test functions.
Letting $\xi = \cc \eta$ in~\eqref{ineq:stable} with $\Omega = B_1^+$,
where $\cc$, $\eta$ are smooth functions satisfying $\cc = 0$ on $\partial^{0} B_{1}^{+}$ and $\supp \eta \subset B_1$,
if we integrate by parts, then \eqref{ineq:stable} becomes
\begin{equation}
\label{stab:half:jac}
\int_{B_{1}^{+}}  \cc \jacobi \cc \, \eta^2\d x 
\leq \int_{B_{1}^{+}} 
\cc^2 \left|\nabla \eta-{\textstyle\frac{1}{2}} \eta A^{-1}(x) \bb(x)\right|^2_{A(x)}
\d x.
\end{equation}
By approximation, we may choose
\[
\cc(x)
= x \cdot \nabla u(x) 
= r u_r \quad \text{ and } \quad \eta(x) = |x|_{A^{-1}(0)}^{\frac{2-n}{2}}\zeta(x),
\]
where $\zeta \in C^{\infty}_c(B_1)$ is a cut-off.
A test function of this type appeared for the first time in the work of Cabr\'{e} and Capella~\cite{CabreCapella-Radial} 
for the Laplacian in the radial case.
A similar choice was used in~\cite{CabreFigalliRosSerra} to establish the boundedness in $C^3$ domains.
Our function is a linear transformation of the latter one.

This choice of test function will lead to the desired inequality, but also produces weighted Hessian errors that we must control.
For this, we invoke Theorem~\ref{thm:previous}, which is why we need the continuity of the coefficient $\vv_i \in C^0(\overline{B_1^+})$ and the assumptions $f \geq 0$ and $f' \geq 0$ on the nonlinearity.
By contrast, for the Laplacian in $C^3$ domains, no such errors arise, which is why the previous works~\cite{CabreFigalliRosSerra, CabreRadial} did not need any assumptions on $f$ at this step.

In the second proposition, we control the $L^1$ norm of our stable solution by the $L^1$ norm of its radial derivative.
This estimate is an extension of a device in~\cite{CabreRadial} for the Laplacian in half-balls, and it is also the key step which makes our proofs quantitative.
In addition to a smallness condition on the coefficients, this is the only place where we need the assumption $A(0) = \id$ and where we use the convexity of $f$ crucially (aside from the approximation argument for $C^{1,1}$ domains).
It is worth mentioning that such a tool is also available for interior estimates, where the proof is entirely different and only requires $f$ to be nonnegative; see~\cite{CabreRadial, ErnetaInterior}.

\begin{proposition}
\label{prop:l1rad}
Let $u \in W^{3,p}(B_1^+)$, for some $p > n$,
be a nonnegative stable solution of $-L u = f(u)$ in $B_{1}^{+}$,
with $u = 0$ on $\partial^{0} B_{1}^{+}$.
Assume that $f \in C^1(\R)$ is nonnegative, nondecreasing, and convex.
Assume that $L$ satisfies conditions~\eqref{elliptic},~\eqref{reg:coeffs}, and~\eqref{reg:b} in $\Omega = B_1^{+}$.

If 
\[
A(0) = \id \quad \text{ and } \quad \|D A\|_{L^{\infty}(B_{1}^{+})} + \|\vv\|_{L^{\infty}(B_{1}^{+})} \leq \varepsilon_0,
\]
then
\[
\|u\|_{L^{1}(B_{1}^{+} \setminus B_{1/2}^{+})} \leq C \|u_r\|_{L^1(B_1^{+} \setminus B_{1/2}^{+})},
\]
where $\varepsilon_0 > 0$ and $C$ are universal constants.
\end{proposition}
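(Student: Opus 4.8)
The plan is to reduce the estimate to a one‑dimensional inequality along rays emanating from the origin and then exploit the boundary condition $u=0$ on $\partial^0 B_1^+$ together with convexity of $f$. For each direction $\omega$ in the upper hemisphere, write $u$ restricted to the ray $r\mapsto u(r\omega)$ and note that $u_r(r\omega)=\partial_r u(r\omega)$, so that on the annular region $B_1^+\setminus B_{1/2}^+$ one has $u(r\omega)=u(\tfrac12\omega)+\int_{1/2}^r u_r(s\omega)\,ds$. The problematic term is the boundary value $u(\tfrac12\omega)$, which is \emph{not} small in general; it must be controlled by the radial derivative elsewhere. This is precisely where the geometry of the half‑ball and the sign/convexity structure enter: because $u\geq 0$ and $u=0$ on the flat part $\{x_n=0\}$, for directions $\omega$ near the equator the value $u(\tfrac12\omega)$ is itself an integral of a tangential‑to‑radial derivative starting from a boundary point where $u$ vanishes.

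First I would record the integral stability inequality~\eqref{ineq:stable} and run the argument of~\cite{CabreRadial} adapted to variable coefficients: test with $\xi = c\,\eta$ where $c = x\cdot\nabla u$ (as in Proposition~\ref{prop:rad}) but now with a cutoff $\eta$ supported in the annulus $B_1\setminus B_{1/2}$ and identically $1$ on a slightly smaller annulus. The key algebraic identity is $L(x\cdot\nabla u) = x\cdot\nabla(Lu) + 2 a_{ij}u_{ij} + (\text{error terms from } DA,\ b)$, so that when $-Lu=f(u)$ with $f$ convex and nondecreasing one gets $J_u(x\cdot\nabla u)=$ (a favorable sign term involving $f'(u)\,x\cdot\nabla u$ versus $f(u)$, using convexity $f(u)-u f'(u)\le f(0)$, hence $\le 0$ after subtracting $f(0)$ appropriately) $+$ (Hessian and first‑order errors controlled by $\varepsilon$ and the Hessian estimate~\eqref{hessianw}). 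Plugging this into~\eqref{stab:half:jac} and using $A(0)=\id$ to align the Euclidean and $A$‑norms up to $O(\varepsilon)$, one extracts a bound of the form $\int |\nabla u|^2\eta^2 \lesssim \int u_r^2 |\nabla\eta|^2 + \varepsilon(\text{lower order})$ on the annulus, and then integrates in $r$ and uses the one‑dimensional reconstruction above to pass from $|\nabla u|$, $u$ to $u_r$ in $L^1$, Cauchy–Schwarz converting the $L^2$ control to $L^1$ on the bounded annulus.

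The main obstacle I anticipate is the boundary term on $\partial^0 B_1^+$ and the $f(0)$ term. When one integrates by parts to derive~\eqref{stab:half:jac} from~\eqref{ineq:stable} with $c = x\cdot\nabla u$, the function $c = x\cdot\nabla u$ does vanish on $\partial^0 B_1^+$ (since on $\{x_n=0\}$ one has $u\equiv 0$ so the tangential derivatives vanish, and $x_n=0$ kills the normal component), so the surface integral drops — but verifying this cleanly and checking the sign of any residual surface integral from the $b$‑term requires the continuity assumption~\eqref{reg:b}, exactly as flagged in the remark after Theorem~\ref{thm:holder}. The second delicate point is that $f$ need not vanish at $0$: one handles this by replacing $f$ with $\tilde f(t)=f(t)-f(0)$, which is still nonnegative on the range of $u$? — not quite, so instead one keeps track of the $f(0)$ contribution and absorbs it using $u\ge0$ and the equation tested against a suitable positive function, noting $f(0)\le f(u)$ pointwise. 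Finally, the smallness threshold $\varepsilon_0$ is chosen so that all the $\varepsilon$‑errors coming from $DA$, from $b$, and from the discrepancy between $A(x)$‑ and Euclidean lengths are absorbed into the left‑hand side; this is routine once the main identity is in place. I would carry out the steps in the order: (1) stability inequality with the chosen test function; (2) the commutator identity for $L(x\cdot\nabla u)$ and use of convexity; (3) absorption of Hessian errors via~\eqref{hessianw}; (4) the one‑dimensional radial reconstruction using $u|_{\partial^0}=0$; (5) Cauchy–Schwarz and choice of $\varepsilon_0$.
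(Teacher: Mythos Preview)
Your proposal conflates the mechanism of Proposition~\ref{prop:rad} with that of Proposition~\ref{prop:l1rad}. Testing the stability inequality with $\cc = x\cdot\nabla u$ and a cutoff $\eta$ supported in the annulus is precisely the computation behind Lemma~\ref{lemma:bdy:radial} and Proposition~\ref{prop:rad}; what it yields is a weighted $L^2$ control of $u_r$ by $|\nabla u|$ in a larger annulus (plus $\varepsilon$-errors), not an $L^1$ control of $u$ by $u_r$. Your sketch ``$\int |\nabla u|^2\eta^2 \lesssim \int u_r^2|\nabla\eta|^2 + \varepsilon(\cdots)$'' is essentially Lemma~\ref{lemma:bdy:radial} and goes in the wrong direction for the present statement. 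Moreover, the convexity device you invoke, $f(u)-uf'(u)\le f(0)$, plays no role here; and your ``radial reconstruction using $u|_{\partial^0}=0$'' cannot close the argument, since for $\omega$ with $\omega_n>0$ the ray $r\mapsto r\omega$ does not meet $\partial^0 B_1^+$ except at the origin, so the boundary condition gives you nothing at $r=1/2$.

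The paper's proof uses an entirely different idea (the ``device of~\cite{CabreRadial}'' referenced in the introduction): one introduces the rescalings $u^\tau(x)=u(\tau x)$, multiplies the equation $-L^\tau u^\tau=f(u^\tau)$ by a test function $\xi=x_n\zeta$ supported in a fixed annulus, integrates, and then \emph{differentiates the resulting identity in $\tau$} over $[1,1.1]$. The $\tau$-derivative of the left side produces, after integration by parts, a term $-2\int(L^\tau u^\tau)\xi \ge 0$ (here $f\ge0$ is used) which, via comparison with the solution of an auxiliary mixed Laplace problem, dominates $c\,\|u\|_{L^1}$ on a sub-annulus; the remaining terms are controlled by $\|u_r\|_{L^1}$ and $\varepsilon$-errors. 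The right side becomes $\int(f(u)-f(u^{1.1}))\xi$, and \emph{this} is where convexity enters, via $f(u)-f(u^{1.1})\le f'(u)(u-u^{1.1})$; one then applies Cauchy--Schwarz and the stability inequality~\eqref{ineq:stable} twice (to $\xi$ and to $(u-u^{1.1})\phi$), followed by interpolation and Corollary~\ref{cor:annuli}, to bound this by $\delta\|u\|_{L^1}+C_\delta\|u_r\|_{L^1}$. The one-dimensional radial identity you mention does appear, but only at the very end to pass from a thin annulus to the full one. Your outline is missing the $\tau$-differentiation step, which is the heart of the argument.
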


The proof of the H\"{o}lder estimate~\eqref{holder} in Theorem~\ref{thm:holder} requires the previous results.
Combining the energy estimate~\eqref{higherint} with Proposition~\ref{prop:l1rad} on dyadic annuli and rescaling,
it is easy to show that the weighted Dirichlet energy in a ball is controlled by the weighted $L^2$ norm of the radial derivative in a larger ball.
Applying Proposition~\ref{prop:rad} and by hole-filling, we then deduce a decay of the former quantity, leading to a $C^{\alpha}$ bound.

\subsection{Outline of the article}

In Section~\ref{section:radial}, we prove Proposition~\ref{prop:rad},
the weighted inequality for the radial derivative.
Section~\ref{section:l1rad} focuses on the proof of Proposition~\ref{prop:l1rad}, controlling the solution by its radial derivative.
In Section~\ref{section:holder}, we obtain the H\"{o}lder estimates in Theorem~\ref{thm:holder}.
Finally, in Section~\ref{section:approximation} we prove our main result, Theorem~\ref{thm:c11}.

In Appendix~\ref{app:approximation}, we explain how to approximate $C^{1,1}$ domains from the interior by smooth sets satisfying uniform bounds.
Appendix~\ref{app:uniqueness} is devoted to proving the uniqueness of stable solutions for convex nonlinearities, an auxiliary result needed in our proof of the main theorem.

\section{The boundary weighted $L^2$ estimate for radial derivatives}
\label{section:radial}

Here we obtain the weighted estimates in half-balls for the radial derivative leading to Proposition~\ref{prop:rad}.
For this, we test the stability inequality \eqref{stab:half:jac} with the functions
\[
\cc := x \cdot \nabla u \quad \text{ and } \quad \eta = r^{\frac{2-n}{2}} \zeta,
\]
where $\zeta \in C^{\infty}_c(B_1)$ is a cut-off.
This is a valid test function, since 
$x \cdot \nabla u = 0$ on $\partial^0 B_1^+$.
Thus, the proof is essentially the same as in the interior case; see \cite{ErnetaInterior}.

First we test with a generic cut-off $\eta$, not necessarily a power function:
\begin{lemma}\label{lemma:bdy:radial}
Let $u \in W^{3,p}(B_1^+)$, for some $p > n$, be a stable solution of $- L u = f(u)$ in $B_{1}^{+}$, with $u = 0$ on $\partial^0 B_{1}^{+}$, for some function $f \in C^1(\R)$.
Assume that $L$ satisfies conditions~\eqref{elliptic} and~\eqref{reg:coeffs} in $\Omega = B_1^{+}$, and that
\[
\|D A\|_{L^{\infty}(B_{1}^{+})} + \|\vv\|_{L^{\infty}(B_{1}^{+})} \leq \varepsilon
\]
for some $\varepsilon > 0$.

Then
\[
\begin{split}
& \int_{B_{1}^{+}}  |\nabla u|_{A(x)}^2 \Big((n-2)\eta^2 +  x\cdot \nabla (\eta^2)\Big)\d x \\
& \qquad \qquad + \int_{B_{1}^{+}} \Big( -2 (x\cdot \nabla u) A(x)\nabla u \cdot \nabla (\eta^2)-|x \cdot \nabla u|^2 |\nabla \eta|_{A(x)}^2 \Big) \d x\\
& \qquad \qquad \qquad \qquad \leq
\displaystyle 
C \varepsilon \int_{B^{+}_1} |D^2 u| |\nabla u| |x|^2 \eta^2 \d x\\
 & \qquad \qquad \qquad \qquad \qquad \qquad +C \varepsilon \int_{B_1^+} |\nabla u|^2 \Big(|x|^{2} |\nabla (\eta^2)|  + (|x| + \varepsilon |x|^2) \eta^2 \Big) \d x,
\end{split}
\]
for all $\eta \in C_c^{\infty}(B_1)$,
where $C$ is universal.
\end{lemma}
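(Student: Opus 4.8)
\emph{Overall approach.} The plan is to feed $\cc=x\cdot\nabla u$ into the integral stability inequality~\eqref{stab:half:jac} and then rewrite its left-hand side, $\int_{B_1^+}\cc\,\jacobi\cc\,\eta^2$, by differentiating the equation once and integrating by parts, keeping careful track of the fact that every term not already present in the conclusion comes with a factor $\varepsilon$. First I would check that $\cc=x\cdot\nabla u$ is an admissible test function: since $u\in W^{3,p}(B_1^+)$ with $p>n$, we have $\cc\in W^{2,p}(B_1^+)$, and $\cc$ vanishes on $\partial^0 B_1^+$ because $u=0$ on $\{x_n=0\}$ forces all tangential derivatives of $u$ to vanish there, while $x$ itself is tangential on that face, so $\cc=x\cdot\nabla u=0$. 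Hence, by the same approximation already used to pass from~\eqref{ineq:stable} to~\eqref{stab:half:jac}, I may take $\cc=x\cdot\nabla u$, obtaining
\[
\int_{B_1^+}\cc\,\jacobi\cc\,\eta^2\,\d x\le\int_{B_1^+}\cc^2\Big|\nabla\eta-{\textstyle\frac12}\eta A^{-1}(x)b(x)\Big|^2_{A(x)}\,\d x,
\]
where $\|b\|_{L^\infty(B_1^+)}\le\varepsilon$.

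\emph{Rewriting $\jacobi\cc$.} Next I would compute $\jacobi\cc$ using the equation. From $\cc_{ij}=2u_{ij}+x_ku_{kij}$ one has $L\cc=2a_{ij}u_{ij}+a_{ij}x_ku_{kij}+b_iu_i+b_ix_ku_{ki}$; since $a_{ij}u_{ij}+b_iu_i=-f(u)\in W^{1,p}(B_1^+)$, differentiating the equation gives, in $L^p$, the relation $x_k\partial_k(a_{ij}u_{ij})+x_k\partial_k(b_iu_i)+f'(u)\cc=0$, and expanding $\partial_k(a_{ij}u_{ij})=(\partial_ka_{ij})u_{ij}+a_{ij}u_{kij}$ yields $a_{ij}x_ku_{kij}=-f'(u)\cc-x_k\partial_k(b_iu_i)-x_k(\partial_ka_{ij})u_{ij}$. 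Substituting into $L\cc$ and adding $f'(u)\cc$, the $f'(u)\cc$ terms cancel and I obtain the key identity
\[
\jacobi\cc=2a_{ij}u_{ij}+b_iu_i-x_k(\partial_ka_{ij})u_{ij}-x_k\partial_k(b_iu_i)+b_ix_ku_{ki},
\]
in which no term contains $f$.

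\emph{Integration by parts and error bookkeeping.} Then I would multiply by $\cc\eta^2$ and integrate term by term. The leading term $2\int\cc\,a_{ij}u_{ij}\,\eta^2$ is integrated by parts twice; writing $\nabla\cc=\nabla u+w$ with $w:=(x\cdot\nabla)\nabla u$ and using $A(x)\nabla u\cdot w=\frac12x\cdot\nabla(|\nabla u|^2_{A(x)})-\frac12x_k(\partial_ka_{ij})u_iu_j$ together with one more integration by parts against $x\eta^2$, this term becomes
\[
\int_{B_1^+}|\nabla u|^2_{A(x)}\Big((n-2)\eta^2+x\cdot\nabla(\eta^2)\Big)\,\d x-2\int_{B_1^+}(x\cdot\nabla u)A(x)\nabla u\cdot\nabla(\eta^2)\,\d x
\]
up to terms carrying a factor $\|DA\|_{L^\infty}\le\varepsilon$; here all boundary contributions on $\partial^0 B_1^+$ vanish, either because $\cc=x\cdot\nabla u=0$ or because $x_n=0$ there, and those on $\partial^+B_1^+$ vanish since $\supp\eta\subset B_1$, which is where the half-ball geometry keeps the computation clean (no curvature terms appear). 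The remaining pieces of $\jacobi\cc$ are error terms: $\int\cc\,b_iu_i\,\eta^2$, $\int\cc\,b_ix_ku_{ki}\,\eta^2$ and $\int\cc\,x_k(\partial_ka_{ij})u_{ij}\,\eta^2$ are bounded directly by $C\varepsilon\int|D^2u||\nabla u||x|^2\eta^2+C\varepsilon\int|\nabla u|^2|x|\eta^2$ using $|b|\le\varepsilon$, $\|DA\|_{L^\infty}\le\varepsilon$ and $|\cc|\le|x||\nabla u|$, while $\int\cc\,x_k\partial_k(b_iu_i)\,\eta^2$ is first integrated by parts to shift the derivative off $b_iu_i$, after which $|b_iu_i|\le\varepsilon|\nabla u|$ and the expansion of $\partial_k(\cc\,x_k\eta^2)$ produce a contribution bounded by $C\varepsilon\int|D^2u||\nabla u||x|^2\eta^2+C\varepsilon\int|\nabla u|^2(|x|^2|\nabla(\eta^2)|+|x|\eta^2)$. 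Finally I would expand the right-hand side of the stability inequality: its leading part is exactly $\int|x\cdot\nabla u|^2|\nabla\eta|^2_{A(x)}$, and the cross and quadratic terms in $b$ contribute at most $C\varepsilon\int|\nabla u|^2|x|^2|\nabla(\eta^2)|+C\varepsilon^2\int|\nabla u|^2|x|^2\eta^2$ (using $|b|\le\varepsilon$, $A(x)^{-1}\le\elliptic^{-1}\id$, and $\cc^2\le|x|^2|\nabla u|^2$). Collecting all of this and moving the four structural terms to the left yields the claimed inequality.

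\emph{Main difficulty.} The only real obstacle will be the bookkeeping: one must verify that every term produced by the two integrations by parts of $2\int\cc a_{ij}u_{ij}\eta^2$ and by the differentiation of the equation is either one of the four structural terms of the conclusion or carries an honest factor $\varepsilon=\|DA\|_{L^\infty}+\|b\|_{L^\infty}$, and that the powers of $|x|$ and the derivatives falling on $\eta$ exactly match the right-hand side. A secondary point requiring care is the low regularity of $b$: since $b_i$ is merely continuous, $\partial_k(b_iu_i)$ must be interpreted through the equation, as $-\partial_k(a_{ij}u_{ij})-f'(u)u_k\in L^p$, and never as $(\partial_kb_i)u_i+b_iu_{ki}$; this is precisely why the term $\int\cc x_k\partial_k(b_iu_i)\eta^2$ must be handled by an integration by parts rather than by expanding the product.
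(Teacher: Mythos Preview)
Your proposal is correct and follows essentially the same route as the paper: test stability with $\cc=x\cdot\nabla u$, eliminate $f'(u)$ using the equation, extract the Pohozaev-type structure from $2(x\cdot\nabla u)a_{ij}u_{ij}$, and absorb everything else into $\varepsilon$-errors. The only cosmetic difference is ordering: the paper handles $\int f'(u)\cc^2\eta^2$ by writing it as $\int\nabla[f(u)]\cdot \cc\,x\,\eta^2$, integrating by parts, and substituting $f(u)=-Lu$ (so the manipulation happens at the integral level), whereas you differentiate the equation first to obtain the pointwise identity $\jacobi\cc=2a_{ij}u_{ij}+b_iu_i-x_k(\partial_ka_{ij})u_{ij}-x_k\partial_k(b_iu_i)+b_ix_ku_{ki}$ and only then integrate; your extra care in treating $x_k\partial_k(b_iu_i)$ by integration by parts (since $b$ is merely continuous) corresponds exactly to the paper leaving $\int b_iu_i\,\div\{(x\cdot\nabla u)x\eta^2\}$ unexpanded.
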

\begin{proof}
We test the stability inequality \eqref{stab:half:jac} with $\cc = x \cdot \nabla u$.
First, we compute the left-hand side of \eqref{stab:half:jac}, i.e., 
$\int_{B_1^+} (x \cdot \nabla u) \jacobi [x \cdot \nabla u] \, \eta^2 \d x$.
Computing, we have
\begin{equation}
\label{reo1}
L[x \cdot \nabla u] = x_k a_{ij}(x) u_{ijk} + 2 a_{ij}(x) u_{ij} + x_k \vv_i(x) u_{ik} + \vv_i(x) u_{i} \quad \text{ for a.e. } x \in B_1^+.
\end{equation}
For the zero order term, integrating by parts and using the equation, we have
\begin{equation}
\label{reo2}
\begin{split}
&\int_{B_1^{+}} f'(u) (x \cdot \nabla u)^2 \, \eta^2 \d x\\  
& \qquad = \int_{B_1^{+}} \nabla [f(u)] \cdot (x \cdot \nabla u) x \, \eta^2 \d x
%\\& \qquad 
=  \int_{B_1^{+}} L u \,  \div\left\{ (x \cdot \nabla u) x \, \eta^2 \right\} \d x\\
& \qquad = - \int_{B_1^{+}} x \cdot \nabla [a_{ij}(x) u_{ij}] \, (x \cdot \nabla u) \, \eta^2  \d x + \int_{B_1^{+}} \vv_i(x) u_i \, \div\left\{ (x \cdot \nabla u) x \, \eta^2 \right\} \d x,
\end{split}
\end{equation}
where in the last line we integrated by parts again.
Combining \eqref{reo1} and \eqref{reo2}, it follows that
\begin{equation}
\label{reo3}
\begin{split}
&\int_{B_1^+} (x \cdot \nabla u) \jacobi[x \cdot \nabla u] \, \eta^2 \d x
= \int_{B_1^+} (x \cdot \nabla u) L [x \cdot \nabla u] \, \eta^2 \d x + \int_{B_1^+} f'(u)(x \cdot \nabla u)^2 \, \eta^2 \d x\\
& = \int_{B_1^+} 2 (x \cdot \nabla u) a_{ij}(x) u_{ij}  \eta^2 \d x - \int_{B_1^{+}} x \cdot \nabla a_{ij}(x) \, u_{ij} \, (x \cdot \nabla u) \, \eta^2  \d x \\
& \quad \quad +\int_{B_1^+} x_k \vv_i(x) u_{ik} (x \cdot \nabla u) \eta^2 \d x 
+ \int_{B_1^{+}}  \vv_i(x) u_{i} \Big(  (x \cdot \nabla u) \eta^2 + \div\left\{ (x \cdot \nabla u) x \, \eta^2 \right\} \Big)  \d x.
\end{split}
\end{equation}

Notice that the first integrand in the right-hand side of \eqref{reo3} can be written as
\begin{equation}
\label{reo4}
\begin{split}
2 (x \cdot \nabla u) a_{ij}(x) u_{ij} 
&= \div\left(2(x\cdot \nabla u) A(x)\nabla u - |\nabla u|^2_{A(x)} x\right) +(n-2) |\nabla u|^2_{A(x)}\\
& \quad \quad \quad - 2(x \cdot \nabla u) \partial_i a_{ij}(x) u_j + x \cdot \nabla a_{ij}(x) u_{i} u_{j}.
\end{split}
\end{equation}
Hence, substituting \eqref{reo4} in \eqref{reo3} we deduce
\begin{equation}
\label{reo5}
\begin{split}
&\int_{B_1^+} (x \cdot \nabla u) \jacobi[x \cdot \nabla u] \, \eta^2 \d x\\
&= \int_{B_1^+} \div\left(2(x\cdot \nabla u) A(x)\nabla u - |\nabla u|^2_{A(x)} x\right) \eta^2 \d x + \int_{B_1^+} (n-2) |\nabla u|^2_{A(x)} \eta^2 \d x
\\
& \quad \quad +\int_{B_1^+} \Big(-2 \partial_i a_{ij}(x) u_j \, (x \cdot \nabla u) + x \cdot \nabla a_{ij}(x) \, \big\{ u_{i} u_{j} -u_{ij} \, (x \cdot \nabla u) \big\} \Big) \eta^2 \d x\\
& \quad \quad +\int_{B_1^+} \vv_i(x) x_k u_{ik} (x \cdot \nabla u) \eta^2 \d x 
+ \int_{B_1^{+}}  \vv_i(x) u_{i} \Big(  (x \cdot \nabla u) \eta^2 + \div\left\{ (x \cdot \nabla u) x \, \eta^2 \right\} \Big)  \d x.
\end{split}
\end{equation}
Thus, integrating by parts the divergence term in \eqref{reo5} and applying the coefficient estimates $\|\nabla a_{ij}\|_{L^{\infty}} + \|\vv_i\|_{L^{\infty}} \leq \varepsilon$, we obtain the lower bound
\begin{equation}
\label{reo6}
\begin{split}
&\int_{B_1^+} (x \cdot \nabla u) \jacobi[x \cdot \nabla u] \, \eta^2 \d x\\
&\geq  - \int_{B_1^+} 2(x\cdot \nabla u) A(x)\nabla u \cdot \nabla (\eta^2) + \int_{B_1^+} |\nabla u|^2_{A(x)} \Big( (n-2)\eta^2+ x \cdot \nabla( \eta^2) \Big)  \d x\\
& \quad \quad 
-C \varepsilon \int_{B_1^+} \Big( |D^2 u | |\nabla u| |x|^2 \eta^2 + |\nabla u|^2 |x| \eta^2 + |\nabla u|^2 |x|^2 |\nabla (\eta^2)| \Big) \d x.
\end{split}
\end{equation}

On the other hand, testing the integral stability inequality \eqref{stab:half:jac} with $\cc = x \cdot \nabla u$,
we deduce the upper bound
\begin{equation}
\label{reo7}
\begin{split}
\int_{B_1^{+}}
&(x\cdot \nabla u)  \jacobi [x \cdot \nabla u] \, \eta^2 \d x \\
&\leq \int_{B_1^{+}} |x \cdot \nabla u|^2 \left|\nabla \eta-{\textstyle\frac{1}{2}} \eta A^{-1}(x) \bb(x)\right|^2_{A(x)} \d x\\
& \leq \int_{B_1^{+}} |x \cdot \nabla u|^2  |\nabla \eta|_{A(x)}^2 \d x+ C \varepsilon \int_{B_1} |\nabla u|^2 |x|^2 \big( |\nabla (\eta^2)| + \varepsilon \eta^2 \big)  \d x.
\end{split}
\end{equation}
Combining \eqref{reo6} and \eqref{reo7} and rearranging terms yields the claim.
\end{proof}

Recall our notation for the radial derivative
\[
r = |x|, \quad \quad u_r = \dfrac{x}{|x|} \cdot \nabla u.
\]
Given $\rho \in (0, 1/2]$, we consider a cut-off $\zeta \in C^{\infty}_c(B_{2 \rho})$ with $0 \leq \zeta \leq 1$, $\zeta =1$ in $B_{\rho}$, and $|\nabla \zeta| \leq C/\rho$ in $\supp |\nabla \zeta| \subset \overline{B_{2\rho}} \setminus B_{\rho}$.
For $a \geq 0$, by approximation, we may take the singular test function $\eta = r^{-a/2} \zeta$ in Lemma~\ref{lemma:bdy:radial}
(see~\cite[Lemma~5.4]{ErnetaInterior}),
which yields:
\begin{lemma}
\label{lemma:abs:rad}
Let $u \in W^{3,p}(B_1^+)$, for some $p > n$, be a stable solution of 
$- L u = f(u)$ in $B_{1}^{+}$, with $u = 0$ on $\partial^0 B_{1}^{+}$,
for some function $f \in C^1(\R)$.
Assume that $L$ satisfies conditions~\eqref{elliptic} and~\eqref{reg:coeffs} in $\Omega = B_1^{+}$,
and that
\[
\|D A\|_{L^{\infty}(B_{1}^{+})} + \|\vv\|_{L^{\infty}(B_{1}^{+})} \leq \varepsilon
\]
for some $\varepsilon > 0$.

If $0 \leq a \leq \min\{10, n\} - 2$, then
\[
\begin{split}
&(n-2-a) \int_{B_{\rho}^{+}}  r^{-a} |\nabla u|^2 \d x + \frac{a(8-a)}{4} \int_{B_{\rho}^{+}} r^{-a} u_r^2 \d x\\
& \quad \quad \quad  \leq  C \int_{B_{2\rho}^{+} \setminus B_{\rho}^{+}} r^{-a} |\nabla u|^2 \d x  +C \varepsilon \int_{B_{2\rho}^{+}} r^{2-a} |D^2 u| |\nabla u|  \d x \\
& \quad \quad \quad \quad \quad \quad  
 + C \varepsilon \int_{B_{2\rho}^{+}} \big( r^{1-a} + \varepsilon r^{2-a}\big) |\nabla u|^2 \d x.
\end{split}
\]
for all $\rho \leq 1/2$,
where $C$ is a universal constant.
\end{lemma}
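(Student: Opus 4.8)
The plan is to apply Lemma~\ref{lemma:bdy:radial} with the specific choice $\eta = r^{-a/2}\zeta$, where $\zeta$ is the cut-off supported in $B_{2\rho}$ with $\zeta \equiv 1$ on $B_\rho$, and then carefully extract the two positive main terms on the left-hand side while absorbing all the remaining contributions into the error terms. Since $\eta$ is singular at the origin when $a > 0$, the first step is the approximation argument: one truncates $\eta$ away from $0$, applies the (already proven) Lemma~\ref{lemma:bdy:radial} to the truncated function, and passes to the limit, using $u \in W^{3,p}$ with $p > n$ together with the restriction $a \le \min\{10,n\}-2$ to ensure all integrals converge; this is exactly the mechanism of \cite{ErnetaInterior}*{Lemma 5.4}, so I would simply invoke it.

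The heart of the computation is to evaluate the left-hand side of Lemma~\ref{lemma:bdy:radial} with $\eta^2 = r^{-a}\zeta^2$. Writing $x \cdot \nabla(\eta^2) = -a\, r^{-a}\zeta^2 + r^{-a}\, x\cdot\nabla(\zeta^2)$, the first term on the left of Lemma~\ref{lemma:bdy:radial} produces $(n-2-a)\int r^{-a}|\nabla u|^2_{A(x)}\zeta^2$ plus a term with $x\cdot\nabla(\zeta^2)$ supported in the annulus $B_{2\rho}\setminus B_\rho$, which by $|\nabla\zeta|\le C/\rho$ and $|x|\le 2\rho$ there is bounded by $C\int_{B_{2\rho}^+\setminus B_\rho^+} r^{-a}|\nabla u|^2$. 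For the second group of terms on the left, one computes $\nabla\eta = -\tfrac{a}{2}r^{-a/2-1}\tfrac{x}{|x|}\zeta + r^{-a/2}\nabla\zeta$, so that $|\nabla\eta|^2$ (with respect to $A(x)$) splits into a radial part $\tfrac{a^2}{4}r^{-a-2}\zeta^2$, a cross term, and a cut-off part supported in the annulus. The key algebraic identity is that combining the $-2(x\cdot\nabla u)A\nabla u\cdot\nabla(\eta^2)$ contribution (whose principal part is $+2a\int r^{-a}u_r^2\zeta^2$ after writing $x\cdot\nabla u = ru_r$ and keeping the radial direction) with the $-|x\cdot\nabla u|^2|\nabla\eta|^2$ contribution ($= -\tfrac{a^2}{4}\int r^{-a}u_r^2\zeta^2$ from the radial part) yields precisely the coefficient $2a - \tfrac{a^2}{4} = \tfrac{a(8-a)}{4}$ in front of $\int r^{-a}u_r^2$. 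Here I would use $A(0)=I$ is \emph{not} assumed, but the $O(\varepsilon)$ deviation of $A(x)$ from a constant matrix — controlled via $\|DA\|_{L^\infty}\le\varepsilon$ and $|x|\le 2\rho$ — produces only errors of the claimed type; likewise the cross terms between the radial and cut-off parts of $\nabla\eta$ are supported in the annulus and bounded by $C\int_{B_{2\rho}^+\setminus B_\rho^+}r^{-a}|\nabla u|^2$ after Cauchy–Schwarz.

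It then remains to bound the right-hand side of Lemma~\ref{lemma:bdy:radial} with this $\eta$. The Hessian error $C\varepsilon\int|D^2u||\nabla u||x|^2\eta^2 = C\varepsilon\int r^{2-a}|D^2u||\nabla u|\zeta^2 \le C\varepsilon\int_{B_{2\rho}^+}r^{2-a}|D^2u||\nabla u|$ is exactly one of the asserted error terms. For the last group, $|x|^2|\nabla(\eta^2)|$ contributes $\le C r^{2-a}(r^{-1}\zeta^2 + |\nabla(\zeta^2)|) \le C(r^{1-a}\zeta^2 + \mathbf{1}_{\text{annulus}}r^{1-a})$, while $(|x|+\varepsilon|x|^2)\eta^2 \le C(r^{1-a}+\varepsilon r^{2-a})\zeta^2$; all of these are absorbed into $C\varepsilon\int_{B_{2\rho}^+}(r^{1-a}+\varepsilon r^{2-a})|\nabla u|^2$ once one notes that on the annulus the weight $r^{1-a}$ is comparable to $r^{-a}$ up to a factor $2\rho\le 1$, so the annulus pieces can equivalently be folded into the first error term $C\int_{B_{2\rho}^+\setminus B_\rho^+}r^{-a}|\nabla u|^2$ (this is harmless since that term already appears with a universal, not $\varepsilon$, constant). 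Collecting the main terms on the left and everything else on the right gives the stated inequality. The main obstacle I anticipate is purely bookkeeping: tracking the precise coefficient $\tfrac{a(8-a)}{4}$ through the cancellation between the two left-hand groups of Lemma~\ref{lemma:bdy:radial} while making sure the $A(x)$-versus-$I$ discrepancies and the annular cross terms are genuinely of order $\varepsilon$ or supported on $B_{2\rho}^+\setminus B_\rho^+$; there is no conceptual difficulty beyond that, the argument being parallel to the interior case in \cite{ErnetaInterior}.
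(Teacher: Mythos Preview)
Your plan is exactly what the paper does: it omits the proof entirely, saying it is ``the same as the one for interior estimates in \cite{ErnetaInterior}*{Lemma~5.4},'' namely plugging $\eta = r^{-a/2}\zeta$ into Lemma~\ref{lemma:bdy:radial} via approximation and reading off the coefficients. Your identification of the key cancellation $2a - \tfrac{a^2}{4} = \tfrac{a(8-a)}{4}$ and the bookkeeping of the annular and $\varepsilon$-error terms is correct.

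One point deserves care. You write that ``the $O(\varepsilon)$ deviation of $A(x)$ from a constant matrix\dots produces only errors of the claimed type.'' Replacing $A(x)$ by $A(0)$ indeed costs only $O(\varepsilon|x|)$, but the subsequent computation with the \emph{constant} matrix $A(0)$ does not yield the exact coefficients $(n-2-a)$ and $\tfrac{a(8-a)}{4}$ unless $A(0)=\id$: with a general $A(0)$ you get, for instance, $|\sigma|^2_{A(0)}$ in place of $1$ and a nonvanishing cross term $\langle\sigma, A(0)\nabla_T u\rangle$ (where $\sigma = x/|x|$ and $\nabla_T u$ is the tangential gradient). By ellipticity these discrepancies are universally bounded, so the resulting inequality is equivalent to the stated one up to universal multiplicative constants---which is all that is used downstream in Proposition~\ref{prop:rad}---but the literal constants in the lemma are only obtained under $A(0)=\id$ (as is eventually arranged in Step~2 of the proof of Theorem~\ref{thm:holder}). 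This is a minor technical wrinkle rather than a gap in the strategy.
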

\begin{proof}
The claim follows from Lemma~\ref{lemma:bdy:radial} as explained above,
the computations being the same as for the interior estimates in~\cite[Lemma~5.4]{ErnetaInterior}, hence we omit the proof.
\end{proof}

Lemma~\ref{lemma:abs:rad} now  allows us to prove Proposition~\ref{prop:rad}.

\begin{proof}[Proof of Proposition~\ref{prop:rad}]
Since $3 \leq n \leq 9$, we have that $\min\{10, n\}-2  = n-2$ and we may choose the exponent $a = n-2$ in Lemma~\ref{lemma:abs:rad}, leading us to the inequality
\begin{equation}
\label{reo8}
\begin{split}
&\frac{(n-2)(10-n)}{4} \int_{B^{+}_{\rho}} r^{2-n} u_r^2 \d x\\
& \quad \quad \quad  \leq  C \int_{B^{+}_{2\rho} \setminus B^{+}_{\rho}} r^{2-n} |\nabla u|^2 \d x  +C \varepsilon \int_{B^{+}_{2\rho}} r^{4-n} |D^2 u| |\nabla u|  \d x \\
& \quad \quad \quad \quad \quad \quad  
 + C \varepsilon \int_{B^{+}_{2\rho}} \big( r^{3-n} + \varepsilon r^{4-n}\big) |\nabla u|^2 \d x
\end{split}
\end{equation}
for $\rho \leq 1/2$.

It remains to control the weighted Hessian error in \eqref{reo8}.
For this, combining the boundary ``Hessian times the gradient'' estimate~\eqref{hessianw} in Theorem~\ref{thm:previous}
with the analogous interior estimates in \cite[Theorem~1.2]{ErnetaInterior},
by a simple scaling and covering argument 
it follows that
\begin{equation}
\label{reo9}
\int_{B_{\delta/2}^{+} \setminus B_{\delta/4}^{+}} r^{4-n} |D^2 u | |\nabla u| \d x \leq C \int_{B_{\delta}^{+} \setminus B_{\delta/8}^{+}} r^{3-n} |\nabla u|^2 \d x
\quad \text{ for all } \delta \in (0,1) \text{ and } \varepsilon \leq \varepsilon_0,
\end{equation}
where $\varepsilon_0> 0$ and $C$ are universal.
Letting $\delta = 2^{3-k} \rho$ in \eqref{reo9} and summing in $k \in \N$, we obtain
\begin{equation}
\label{reo10}
\int_{B^{+}_{2\rho}} r^{4-n} |D^2 u| |\nabla u|  \d x \leq 
C \int_{B_{4 \rho}^{+}} r^{3-n} |\nabla u|^2 \d x \quad \text{ for all } \rho \leq 1/4 \text{ and } \varepsilon \leq \varepsilon_0.
\end{equation}
Applying \eqref{reo10} in \eqref{reo8}, using that $(10-n)(n-2) > 0$, we deduce the claim.
\end{proof}

\section{In half-annuli the radial derivative controls the function in $L^1$} 
\label{section:l1rad}

Here we take advantage of the homogeneity of the equation to control the $L^1$ norm of a solution by the $L^1$ norm of its radial derivative.
This is an extension of a device due to Cabr\'{e}~\cite{CabreRadial} which provided quantitative proofs of the regularity of stable solutions for the Laplacian in flat domains.
Our proofs remain quantitative thanks to this idea.

Let $\tau \geq 1$ be a parameter close to $1$.
Given any function $v \colon B_1^+ \to \R$,
we denote its $L^{\infty}$ rescaling by $v^{\tau} := v(\tau \cdot)$.
Consider the elliptic operator $L v = a_{ij}(x) v_{ij} + \vv_i(x) v_i$,
with coefficients $a_{ij} \in C^{0,1}(\overline{B_1^{+}})$ and $\vv_i \in C^0(\overline{B_1^+})$.
For each $\tau$, we define $L^{\tau}$ to be the operator given by the rescaling
\begin{equation}
\label{op:res}
L^{\tau} v := \tau^{-2} a_{ij}^{\tau}(x) v_{ij}+ \tau^{-1} \vv_i^{\tau}(x) v_{i}.
\end{equation}
Our principal motivation for considering $L^{\tau}$ is the rescaling property $(Lv)^{\tau} = L^{\tau} v^{\tau}$.
In particular, given a solution $u \in W^{3,p}(B_1^+)$ to $- L u = f(u)$ in $B_1^+$, we have that
\begin{equation}
\label{go1}
- L^{\tau} u^{\tau} = f(u^{\tau}) \quad \text{ in } B_{1/\tau}^{+}.
\end{equation}
Notice that if $1 \leq \tau < 1 + \delta$, then this last equation is satisfied in $B^{+}_{1/(1+\delta)} \subset B_{1/\tau}^{+}$.
The proof of Proposition~\ref{prop:l1rad} involves taking a derivative of~\eqref{go1} with respect to $\tau$.
%To prove Proposition~\ref{prop:l1rad}, we will take a derivative of~\eqref{go1} with respect to $\tau$.

Before proving the proposition,
%Before giving the proof of the proposition, 
it is convenient to recall the following simple corollary of the Hessian and higher integrability estimates proven in our previous work~\cite{ErnetaBdy1}.
Here and throughout this section, we use the notation for half-annuli from~\cite{CabreRadial},
namely, for $\rho_2 > \rho_1 > 0$,
we let
\[
A^{+}_{\rho_1, \rho_2} := B_{\rho_2}^{+} \setminus \overline{B_{\rho_1}^{+}} = \{x \in \R^n \colon  x_n > 0, \rho_1 < |x| < \rho_2\}.
\]

\begin{corollary}[Corollary~3.3 in~\cite{ErnetaBdy1}]
\label{cor:annuli}
Let $u \in W^{3,p}(B_1^+)$, for some $p > n$,
be a nonnegative stable solution of 
$- L u = f(u)$ in $B_{1}^{+}$, with $u = 0$ on $\partial^0 B_{1}^{+}$.
Assume that $f \in C^1(\R)$ is nonnegative and nondecreasing.
Assume that $L$ satisfies conditions~\eqref{elliptic},~\eqref{reg:coeffs}, and~\eqref{reg:b} in $\Omega = B_1^{+}$, and that
\[
\|D A\|_{L^{\infty}(B_{1}^{+})} + \|\vv\|_{L^{\infty}(B_{1}^{+})} \leq \varepsilon
\]
for some (possibly large) $\varepsilon > 0$.
Let $0 < \rho_1 < \rho_2 < \rho_3 < \rho_4 \leq 1$.

Then
\[
\|\nabla u\|_{L^{2+\gamma}(A_{\rho_2, \rho_3}^{+})} \leq C_{\varepsilon, \rho_i} \|u\|_{L^1(A_{\rho_1, \rho_4}^{+})}
\]
and
\[
\|D^2 u \|_{L^1(A_{\rho_2, \rho_3}^{+})} \leq C_{\varepsilon, \rho_i} \|u\|_{L^1(A_{\rho_1, \rho_4}^{+})},
\]
where $C_{\varepsilon, \rho_i}$ is a (possibly large) constant depending only on $n$, $\elliptic$, $\bounded$, $\varepsilon$, $\rho_1$, $\rho_2$, $\rho_3$, and $\rho_4$.
\end{corollary}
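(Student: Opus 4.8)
The plan is to obtain both estimates on the half-annulus $A_{\rho_2,\rho_3}^{+}$ from their counterparts on the half-ball $B_1^{+}$ in Theorem~\ref{thm:previous}, together with the analogous interior estimates of~\cite{ErnetaInterior}, by a covering and rescaling argument; the only nontrivial point will be removing a gradient weight in the second inequality. First I would cover the compact set $\overline{A_{\rho_2,\rho_3}^{+}}$ by a finite family of sets of two types: half-balls $B_{r}^{+}(y)$ centered at points $y\in\{x_n=0\}$ with $\rho_2\le|y|\le\rho_3$, and interior balls $B_{r}(y)$ with $B_{2r}(y)\subset\R^n_+$, where $r=r(n,\rho_1,\rho_2,\rho_3,\rho_4)>0$ is fixed small enough that the concentric dilate $B_{2r}^{+}(y)$ (resp.\ $B_{2r}(y)$) lies inside $A_{\rho_1,\rho_4}^{+}$. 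The number of pieces and the radius $r$ depend only on $n$ and the $\rho_i$.

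On each piece $P$ I would straighten it to the unit (half-)ball by an affine change of variables. For a boundary piece the rescaled function $z\mapsto u(y+rz)$ is, in $B_1^{+}$, a nonnegative $W^{3,p}$ solution of a semilinear equation $-\widetilde L v=\widetilde f(v)$ of the same structure, where $\widetilde f=r^2 f$ is again nonnegative and nondecreasing (and convex if $f$ is), $\widetilde L$ has the same ellipticity constants $\elliptic,\bounded$, and $\|\nabla\widetilde a_{ij}\|_{L^{\infty}}$, $\|\widetilde b_i\|_{L^{\infty}}$ equal $r$ times the originals, hence are still $\le\varepsilon$; moreover $\widetilde b_i\in C^{0}(\overline{B_1^{+}})$ and $v=0$ on $\partial^0 B_1^{+}$. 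Thus Theorem~\ref{thm:previous} applies on $B_1^{+}$ with a constant depending only on $n,\elliptic,\bounded,\varepsilon$, and on an interior piece one uses the interior estimates of~\cite{ErnetaInterior} instead. Undoing the scaling only introduces fixed powers $r^{\pm k}$, which are absorbed into $C_{\varepsilon,\rho_i}$.

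For the first inequality this already suffices: applying~\eqref{higherint} (or its interior analogue) on each piece, rescaling back, and summing the finitely many contributions gives $\|\nabla u\|_{L^{2+\gamma}(A_{\rho_2,\rho_3}^{+})}\le C_{\varepsilon,\rho_i}\|u\|_{L^1(A_{\rho_1,\rho_4}^{+})}$. For the second inequality, on each piece $P$ I would first combine~\eqref{hessianw} (and its interior analogue~\cite{ErnetaInterior}*{Theorem~1.2}) with the bound for $\nabla u$ in $L^{2+\gamma}$, hence, by H\"older, in $L^{2}$, on a slightly enlarged piece, to get $\||\nabla u|\,D^2u\|_{L^1(P)}\le C_{\varepsilon,\rho_i}\|u\|_{L^1(A_{\rho_1,\rho_4}^{+})}^2$. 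It then remains to pass from this weighted bound to the unweighted one $\|D^2u\|_{L^1(P)}\le C_{\varepsilon,\rho_i}\|u\|_{L^1(A_{\rho_1,\rho_4}^{+})}$; for this I would exploit the higher integrability $\nabla u\in L^{2+\gamma}$ together with an interpolation inequality in the spirit of~\cite{CabreRadial} (recalled in Appendix~\ref{app:interpolation}). Summing over the pieces gives the claim.

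The step I expect to be the main obstacle is exactly this last de-weighting. The target bound is \emph{linear} in $\|u\|_{L^1}$, whereas the Sternberg--Zumbrun--type estimate~\eqref{hessianw} only controls the \emph{quadratic} quantity $\||\nabla u|\,D^2u\|_{L^1}$, so one cannot merely divide by a constant, and the region where $|\nabla u|$ is small must be treated carefully, genuinely using the gain of integrability of the gradient. A secondary, but routine, point is to track how the constants in Theorem~\ref{thm:previous} and in~\cite{ErnetaInterior} behave under the translations and dilations — which is precisely why the constant $C_{\varepsilon,\rho_i}$ is allowed to depend on $\varepsilon$ and on all four radii $\rho_1,\dots,\rho_4$.
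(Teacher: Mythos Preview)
The paper does not give its own proof of this statement: it is quoted verbatim as Corollary~3.1 of~\cite{ErnetaBdy1} and described there merely as ``a simple corollary of the Hessian and higher integrability estimates''. So there is no proof here to compare against, and I can only assess your argument on its own merits.

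Your covering--rescaling scheme is the natural one, and for the first inequality it works exactly as you describe: apply~\eqref{higherint} (boundary) or the interior analogue on each piece, undo the scaling, and sum.

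The gap is in the second inequality. You correctly isolate the obstacle --- passing from the weighted quantity $\||\nabla u|\,D^2u\|_{L^1}$ controlled by~\eqref{hessianw} to the unweighted $\|D^2u\|_{L^1}$ --- but the fix you propose does not close it. Concretely: combining~\eqref{hessianw} with~\eqref{higherint} yields the \emph{quadratic} bound $\||\nabla u|\,D^2u\|_{L^1}\le C\|u\|_{L^1}^2$. After normalizing $\|u\|_{L^1}=1$, the set $\{|\nabla u|\ge 1\}$ is handled trivially, but on $\{|\nabla u|<1\}$ neither~\eqref{hessianw} nor~\eqref{higherint} says anything about $|D^2u|$. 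The interpolation inequality you invoke from Appendix~\ref{app:interpolation} is
\[
\|\nabla u\|_{L^1}\le C\big(\delta\|D^2u\|_{L^1}+\delta^{-1}\|u\|_{L^1}\big),
\]
which bounds $\nabla u$ by $D^2u$ and $u$, i.e.\ it goes in the wrong direction; it cannot be rearranged to give an \emph{upper} bound on $\|D^2u\|_{L^1}$. Nor does $W^{2,1}$ Calder\'on--Zygmund help, since $L^1\to W^{2,1}$ fails. In short, the two black-box estimates in Theorem~\ref{thm:previous} together with Appendix~\ref{app:interpolation} are not enough to ``de-weight''.

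What you actually need here is input from the \emph{proof} of~\eqref{hessianw} in~\cite{ErnetaBdy1}, not just its statement. The Sternberg--Zumbrun type argument that produces~\eqref{hessianw} in fact controls a curvature quantity of the form $|D^2u|^2-|\nabla|\nabla u||^2$ (up to lower order terms) in $L^1$ against $\|\nabla u\|_{L^2}^2$; combined with an integration by parts this yields the genuine $L^1$ Hessian bound directly, without any de-weighting step. Your write-up should either cite that sharper intermediate estimate from~\cite{ErnetaBdy1}, or reproduce the relevant piece of the Sternberg--Zumbrun computation.
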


\begin{proof}[Proof of Proposition~\ref{prop:l1rad}]
Considering the rescaled function $u(\frac{\cdot}{6})$, we may assume that we have a stable solution in $B_{6}^{+}$.

Let $\zeta \in C^{\infty}_c(A_{4, 5})$ be a nonnegative cut-off function with $\zeta = 1$ in $A_{4.1, 4.9}$.
We consider the function $\xi := x_n \zeta$, which satisfies
\[
\xi \geq 0 \text{ in } A_{4, 5}^{+}, \quad \xi = 0 \text{ on } \partial^0 A_{4, 5}^{+}, \quad \xi = \xi_\nu = 0 \text{ on } \partial^+ A_{4, 5}^{+}, \text{ and } \quad \xi = x_n \text{ in } A_{4.1, 4.9}^{+}.
\]
Multiplying \eqref{go1} (rescaled) by $\xi$ for each $1 \leq \tau \leq 1.1$ and integrating in $A_{4, 5}^{+}$, we have
\begin{equation}
\label{go2}
\int_{A_{4, 5}^{+}} (L^{\tau} u^{\tau}) \xi \d x = - \int_{A_{4, 5}^{+}} f(u^{\tau}) \d x \quad \text{ for all } \tau \in [1, 1.1].
\end{equation}
Differentiating \eqref{go2} with respect to $\tau$ and integrating, we also have
\begin{equation}
\label{go3}
\int_{1}^{1.1} \frac{\d}{\d \tau} \left( \int_{A_{4, 5}^{+}} (L^{\tau} u^{\tau}) \xi \d x  \right) \d \tau = \int_{A_{4, 5}^{+}} \left( f(u) - f(u^{1.1}) \right) \xi \d x.
\end{equation}

Our claim will be a consequence of this last identity.
For this, we first establish lower bounds for the left-hand side of \eqref{go3} by using that $L^{\tau} u^{\tau} \leq 0$.
Later, with the help of the stability inequality and the convexity of $f$, we obtain upper bounds of the right-hand side.
Finally, we control the remaining Hessian errors by applying Corollary~\ref{cor:annuli}.

\vspace{3mm}\noindent
\textbf{Step 1.}
\textit{We prove that }
\[
\begin{split}
\frac{\d}{\d \tau}\int_{A_{4, 5}^{+}} (L^{\tau} u^{\tau}) \xi \d x &\geq c \|u\|_{L^1(A_{4.7, 4.8}^{+})} - C \|u_r \|_{L^1(A_{4, 5.5}^{+})} \\
& \quad \quad \quad\quad \quad - C \varepsilon \| D^2 u\|_{L^1(A_{4, 5.5}^{+})}
- C \varepsilon \|\nabla u\|_{L^1(A_{4, 5.5}^{+})}
\end{split}
\]
\textit{ for all } $\tau \in [1, 1.1]$, \textit{ where $c$ and $C$ are universal constants.}

By definition~\eqref{op:res}, we have that
\begin{equation}
\label{gro1}
\int_{A_{4, 5}^{+}} (L^{\tau} u^{\tau}) \xi \d x = 
\int_{A_{4, 5}^{+}} \tau^{-2} a_{ij}^{\tau}(x) u^{\tau}_{ij} \, \xi d  x 
+ \int_{A_{4, 5}^{+} } \tau^{-1} b^{\tau}_{i}(x) u^{\tau}_{i}  \, \xi \d x
\end{equation}
for all $\tau \in [1, 1.1]$.

On the one hand, since $\frac{\d u^{\tau}}{\d \tau} = \tau^{-1} x \cdot \nabla u^{\tau}$, differentiating under the integral sign,
\begin{equation}
\label{gro2}
\begin{split}
&\frac{\d}{\d \tau} \left\{ \int_{A_{4, 5}^{+}} \tau^{-2} a_{ij}^{\tau}(x) u^{\tau}_{ij} \, \xi d  x \right\} \\
&= 
\int_{A_{4, 5}^{+}} \tau^{-3} a_{ij}^{\tau}(x) [ x \cdot \nabla u^{\tau}]_{ij} \, \xi d  x
- 2  \int_{A_{4, 5}^{+}} \tau^{-3} a_{ij}^{\tau}(x) u^{\tau}_{ij} \, \xi d  x
+\int_{A_{4, 5}^{+}} \tau^{-3} x \cdot \nabla a_{ij}^{\tau}(x) u^{\tau}_{ij} \, \xi d  x\\
&= 
\int_{A_{4, 5}^{+}} \tau^{-1} L^{\tau}[ x \cdot \nabla u^{\tau}] \, \xi d  x - 2  \int_{A_{4, 5}^{+}} \tau^{-1} (L^{\tau} u^{\tau}) \, \xi d  x
- \int_{A_{4, 5}^{+}} \tau^{-2} \vv_{i}^{\tau}(x) [ x \cdot \nabla u^{\tau}]_{i} \, \xi d  x
\\
& \quad \quad  \quad \quad 
+\int_{A_{4, 5}^{+}} \tau^{-3} x \cdot \nabla a_{ij}^{\tau}(x) u^{\tau}_{ij} \, \xi d  x
+ 2 \int_{A_{4, 5}^{+}} \tau^{-2} \vv_{i}^{\tau}(x) u^{\tau}_{i} \, \xi d  x.
\end{split}
\end{equation}
On the other hand, 
since $\supp \xi^{1/\tau} \subset A_{4 \tau ,5\tau}^{+} \subset A_{4, 5.5}^{+}$,
rescaling
%by a change of variables
\[
\begin{split}
\int_{A_{4, 5}^{+}} \tau^{-1} b^{\tau}_{i}(x) u^{\tau}_{i} \xi \d x
= \int_{A_{4, 5}^{+}} b^{\tau}_{i}(x) (u_i)^{\tau} \xi \d x
= \int_{A_{4, 5.5}^{+}} \tau^{-n} b_{i}(x) u_{i} \xi^{1/\tau} \d x
\end{split}
\]
and taking a derivative, we obtain
\begin{equation}
\label{gro3}
\begin{split}
&\frac{\d}{\d \tau} \left\{ \int_{A_{4, 5}^{+} } \tau^{-1} b^{\tau}_{i}(x) u^{\tau}_{i} \xi \d x \right\} 
= \frac{\d}{\d \tau} \left\{  \int_{A_{4, 5.5}^{+}} \tau^{-n} b_{i}(x) u_{i} \xi^{1/\tau} \d x \right\}
\\
&= -n \int_{A_{4, 5.5}^{+}} \tau^{-(n+1)} b_{i}(x) u_{i} \xi^{1/\tau} \d x  - \int_{A_{4, 5.5}^{+}} \tau^{-(n+2)} b_{i}(x) u_{i} \, x \cdot \nabla \xi^{1/\tau} \d x\\
&= -n \int_{A_{4, 5}^{+}} \tau^{-2} b_{i}^{\tau}(x) u^{\tau}_{i} \xi \d x  
- \int_{A_{4, 5}^{+}} \tau^{-3} b_{i}^{\tau}(x) u^{\tau}_{i} \, x \cdot \nabla \xi \d x,
\end{split}
\end{equation}
where in the last line we have 
%undone the rescaling.
rescaled back.
%undone the change of variables.
Thus, combining \eqref{gro2} and \eqref{gro3}, by \eqref{gro1}
we have
\[
\begin{split}
& \frac{\d}{\d \tau} \left\{ \int_{A_{4, 5}^{+}} (L^{\tau} u^{\tau}) \xi \d x \right\} \\ 
&= 
\int_{A_{4, 5}^{+}} \tau^{-1} L^{\tau}[ x \cdot \nabla u^{\tau}] \, \xi d  x - 2  \int_{A_{4, 5}^{+}} \tau^{-1} (L^{\tau} u^{\tau}) \, \xi d  x \\
& \quad \quad  \quad \quad 
+\int_{A_{4, 5}^{+}} \tau^{-3} x \cdot \nabla a_{ij}^{\tau}(x) u^{\tau}_{ij} \, \xi d  x
- \int_{A_{4, 5}^{+}} \tau^{-2} \vv_{i}^{\tau}(x) [ x \cdot \nabla u^{\tau}]_{i} \, \xi d  x 
\\
& \quad \quad  \quad \quad 
+ (2-n) \int_{A_{4, 5}^{+}} \tau^{-2} \vv_{i}^{\tau}(x) u^{\tau}_{i} \, \xi d  x
- \int_{A_{4, 5}^{+}} \tau^{-3} b_{i}^{\tau}(x) u^{\tau}_{i} \, x \cdot \nabla \xi \d x,
\end{split}
\]
and hence, by the bounds $1 \leq \tau \leq 1.1$ and $\|\nabla a_{ij}\|_{L^{\infty}} + \|\vv_i\|_{L^{\infty}} \leq \varepsilon$, it follows that
\begin{equation}
\label{gro4}
\begin{split}
\frac{\d}{\d \tau} \left\{ \int_{A_{4, 5}^{+}} (L^{\tau} u^{\tau}) \xi \d x \right\} & \geq 
\int_{A_{4, 5}^{+}} \tau^{-1} L^{\tau}[ x \cdot \nabla u^{\tau}] \, \xi d  x - 2  \int_{A_{4, 5}^{+}} \tau^{-1} (L^{\tau} u^{\tau}) \, \xi d  x \\
& \quad  \quad  \quad  \quad  - C \varepsilon \|D^2 u\|_{L^1(A_{4, 5.5}^{+})} - C \varepsilon \|\nabla u\|_{L^1(A_{4, 5.5}^{+})}.
\end{split}
\end{equation}

Next, we bound the two terms in the right-hand side of \eqref{gro4} by below.

For the first term, we write $L^{\tau} v = \div\left(\tau^{-2} A^{\tau}(x) v \right) + \tau^{-1} \bb^{\tau}(x) \cdot \nabla v$ in divergence form as in \eqref{op:nondiv}
and integrate by parts.
Recalling  that
$\xi$ and $\xi_{\nu}$ vanish on $\partial^+ A_{4, 5}^{+}$,
and $\xi$ and $x \cdot \nabla u^{\tau}$ vanish on $\partial^0 A_{4, 5}^{+}$,
integrating by parts twice, we have
\begin{equation}
\label{gro5}
\begin{split}
& \int_{A_{4, 5}^{+}} \tau^{-1} L^{\tau} [x \cdot \nabla u^{\tau}] \, \xi \d x \\
&\quad = \int_{A_{4, 5}^{+}} \tau^{-3} (x \cdot \nabla u^{\tau}) \div\big(A^{\tau}(x) \nabla \xi \big) \d x + \int_{A_{4, 5}^{+}} \tau^{-2} \bb^{\tau}(x) \cdot \nabla [x \cdot \nabla u^{\tau}] \, \xi \d x\\
& \quad  \geq - C \|u_r\|_{L^1(A_{4, 5.5}^{+})} - C \varepsilon \|D^2 u\|_{L^1(A_{4, 5.5}^{+})}- C \varepsilon \|\nabla u\|_{L^1(A_{4, 5.5}^{+})},
\end{split}
\end{equation}
where in the last line we have used that $1 \leq \tau \leq 1.1$,
as well as the uniform ellipticity and the bounds $\|\nabla a_{ij}\|_{L^{\infty}} + \|\vv_i\|_{L^{\infty}} \leq \varepsilon$.

The lower bounds for the second term in \eqref{gro4} are the most delicate.
Given $\rho_1 \in (4.1, 4.2)$ and $\rho_2 \in (4.8, 4.9)$,
we consider the solution $\varphi$ of the mixed boundary value problem
\[
\begin{cases}
- \Delta \varphi = 1 & \text{ in } A_{\rho_1, \rho_2}^{+}\\
\varphi = 0 & \text{ on } \partial^0 A_{\rho_1, \rho_2}^{+}\\
\varphi_{\nu} = 0 & \text{ on } \partial^+ A_{\rho_1, \rho_2}^{+}.
\end{cases}
\]
Notice that $\varphi \geq 0$ in $A_{\rho_1, \rho_2}^{+}$ by the maximum principle.
Moreover, we have the a priori estimates
$|\varphi| + |\nabla \varphi| \leq C$ in $A_{\rho_1, \rho_2}^{+}$,
where $C = C(n)$ is a dimensional constant,
and hence
\begin{equation}
\label{hopf:bd}
\xi \geq c \varphi \quad \text{ in } A_{\rho_1, \rho_2}^{+},
\end{equation}
for some small dimensional $c = c(n) > 0$ (for the details, see~\cite[Appendix~B]{CabreRadial}).

Using \eqref{hopf:bd} and by the nonnegativity of $-L^{\tau} u^{\tau} = f(u^{\tau}) \geq 0$,
we see that
\begin{equation}
\label{gro6}
-\int_{A_{4, 5}^{+}} \tau^{-1} (L^{\tau} u^{\tau}) \, \xi \d x \geq
c \int_{A_{\rho_1,\rho_2}^{+}} \tau^{-1}(-L^{\tau} u^{\tau}) \, \varphi \d x.
\end{equation}
Since $A(0) = \id$, we have that $\left|A^{\tau}(x) - \id \right| \leq \varepsilon \tau |x|$
and writing 
\[
L^{\tau} u^{\tau} = \tau^{-2}\Delta u^{\tau} + \tau^{-2}\tr\big( (A^{\tau}(x) - \id) D^2 u^{\tau}\big) + \tau^{-1}\vv^{\tau}(x) \cdot \nabla u^{\tau},
\]
by the bounds for $\varphi$, $\tau$, and the coefficients, the right-hand side of \eqref{gro6} can be further bounded by below as
\begin{equation}
\label{gro7}
\begin{split}
& \int_{A_{\rho_1,\rho_2}^{+}} \tau^{-1} (-L^{\tau} u^{\tau}) \, \varphi \d x\\
& \quad  \quad  \geq  \tau^{-3}\int_{A_{\rho_1,\rho_2}^{+}} (-\Delta u^{\tau}) \, \varphi \d x 
- C \varepsilon \int_{A_{\rho_1,\rho_2}^{+}} \tau^{-1} \left( |D^2 u^{\tau}| |x| + |\nabla u^{\tau}| \right)\d x\\
&  \quad  \quad  \geq c\int_{A_{\rho_1,\rho_2}^{+}} (-\Delta u^{\tau}) \, \varphi \d x 
- C \varepsilon \| D^2 u\|_{L^1(A_{4, 5.5}^{+})} - C \varepsilon \|\nabla u\|_{L^1(A_{4, 5.5}^{+})}.
\end{split}
\end{equation}

Following~\cite{CabreRadial}, we integrate by parts the Laplacian in~\eqref{gro7} as
\begin{equation}
\label{gro8}
\begin{split}
\int_{A_{\rho_1,\rho_2}^{+}} (-\Delta u^{\tau}) \, \varphi \d x 
&= \int_{A_{\rho_1,\rho_2}^{+}} u^{\tau} \d x - \int_{\partial^{+} A_{\rho_1,\rho_2}^{+}} (u^{\tau})_{\nu} \d \hcal^{n-1}\\
&\geq c \|u\|_{L^1(A_{\tau \rho_1, \tau \rho_2}^{+})} 
- \int_{\partial^{+} B_{\rho_1}^{+}} |(u^{\tau})_{r}| \d \hcal^{n-1}
- \int_{\partial^{+} B_{\rho_2}^{+}} |(u^{\tau})_{r}| \d \hcal^{n-1}\\
&\geq c \|u\|_{L^1(A_{4.7, 4.8}^{+})} 
- C\int_{\partial^{+} B_{\tau \rho_1}^{+}} |u_r| \d \hcal^{n-1}
- C\int_{\partial^{+} B_{\tau \rho_2}^{+}} |u_r| \d \hcal^{n-1},
\end{split}
\end{equation}
where in the last line we have used that 
$\tau \rho_1 \leq 1.1 \cdot 4.2 \leq 4.7$ and $\tau \rho_2 \geq 4.8 $.
Now, combining \eqref{gro6}, \eqref{gro7}, and \eqref{gro8}, we deduce
\begin{equation}
\label{gro9}
\begin{split}
- 2 \int_{A_{4, 5}^{+}} \tau^{-1} (L^{\tau} u^{\tau}) \, \xi \d x
&\geq c \|u\|_{L^1(A_{4.7, 4.8}^{+})} 
- C\int_{\partial^{+} B_{\tau \rho_1}^{+}} |u_{r}| \d \hcal^{n-1}
- C\int_{\partial^{+} B_{\tau \rho_2}^{+}} |u_{r}| \d \hcal^{n-1}\\
& \quad \quad  \quad  \quad - C \varepsilon \| D^2 u\|_{L^1(A_{4, 5.5}^{+})} - C \varepsilon \|\nabla u\|_{L^1(A_{4, 5.5}^{+})}.
\end{split}
\end{equation}
Integrating \eqref{gro9} in 
$\rho_1 \in (4.1, 4.2)$
and 
$\rho_2 \in (4.8, 4.9)$,
using that
$\tau \rho_1 \geq 4.1 \geq 4$ and $\tau \rho_2 \leq 1.1 \cdot 4.9 \leq 5.5$,
we finally obtain
\begin{equation}
\label{gro10}
\begin{split}
-2\int_{A_{4, 5}^{+}} \tau^{-1} (L^{\tau} u^{\tau}) \, \xi \d x
&\geq c \|u\|_{L^1(A_{4.7, 4.8}^{+})} 
- C \|u_r\|_{L^1(A_{4, 5.5}^{+})} \\
& \quad \quad \quad \quad - C \varepsilon \| D^2 u\|_{L^1(A_{4, 5.5}^{+})} - C \varepsilon \|\nabla u\|_{L^1(A_{4, 5.5}^{+})}.
\end{split}
\end{equation}
Applying \eqref{gro5} and \eqref{gro10} in \eqref{gro4} now yields the claim.

\vspace{3mm}\noindent
\textbf{Step 2.}
\textit{
We prove that for every $\delta \in (0, 1)$ and $\varepsilon_0 > 0$, we have}
\[
\begin{split}
&\int_{A_{4, 5}^{+}} \big(f(u) - f(u^{1.1})\big)\, \xi \d x 
\\
& \quad \quad \quad \quad \leq 
C_{\varepsilon_0} \left( \delta \|u \|_{L^{1}(A_{3, 6}^{+})} + \delta \|D^2 u\|_{L^1(A_{3.8, 5.8}^{+})} + \delta^{-1-2\frac{2+\gamma}{\gamma}} \|u_{r}\|_{L^1(A_{3,6}^{+})} \right)
\end{split}
\]
\textit{
for all $\varepsilon \leq \varepsilon_0$,
where $\gamma= \gamma(n) > 0$ and $C_{\varepsilon_{0}}$ depends only on $n$, $\elliptic$, $\bounded$, and $\varepsilon_{0}$.
}

Let $\phi \in C^{\infty}_c(A_{3.9, 5.1})$ be a nonnegative test function with $\phi = 1$ in $A_{4, 5}$.
Since $\xi = 0$ on~$\partial A_{4, 5}^{+}$
and
$u - u^{1.1} = 0$ on $\partial^0 A_{3.9, 5.1}^{+}$,
the functions 
$\xi$ and $(u - u^{1.1}) \phi$ are valid test functions in the integral stability inequality~\eqref{ineq:stable} with $\Omega = B_1^+$.

Since $f$ is nondecreasing, we have $f' \geq 0$.
By convexity $f(u) - f(u^{1.1}) \leq f'(u) (u - u^{1.1})$,
hence, multiplying by $\xi$, integrating,
and using the stability inequality~\eqref{ineq:stable} twice, we obtain
\begin{equation}
\label{gul1}
\begin{split}
&\int_{A_{4, 5}^{+}} \big(f(u) - f(u^{1.1})\big) \xi \d x \leq \int_{A_{4, 5}^{+}} f'(u) (u - u^{1.1}) \xi \d x\\
& \quad \leq \left(\int_{A_{4, 5}^{+}} f'(u) \xi^2 \d x\right)^{1/2} \left(\int_{A_{3.9, 5.1}^{+}} f'(u)\left( (u - u^{1.1}) \phi \right)^2 \d x\right)^{1/2}\\
& \quad  \leq \left(\int_{A_{4, 5}^{+}} |\nabla \xi - \textstyle\frac{1}{2}\xi A^{-1}(x)\bb(x) |_{A(x)}^2 \d x\right)^{1/2}\\
& \quad  \quad  \quad \cdot \left(\int_{A_{3.9, 5.1}^{+}} \big|\nabla\big\{(u - u^{1.1})\phi\big\} - \textstyle\frac{1}{2}(u - u^{1.1})\phi A^{-1}(x)\bb(x)\big|_{A(x)}^2 \d x\right)^{1/2}.
\end{split}
\end{equation}
Using that $\|\xi\|_{C^1} + \|\phi\|_{C^1} \leq C$ and the coefficient bounds,
from \eqref{gul1} it follows that
\begin{equation}
\label{gul2}
\begin{split}
&\int_{A_{4, 5}^{+}} \big(f(u) - f(u^{1.1})\big) \xi \d x \\
& \quad \leq C (1 + \varepsilon) \|\nabla (u - u^{1.1})\|_{L^2(A_{3.9, 5.1}^{+})} + C(1 + \varepsilon)^2 \|u - u^{1.1}\|_{L^2(A_{3.9, 5.1}^{+})}\\
& \quad \leq C_{\varepsilon_0} \|\nabla (u - u^{1.1})\|_{L^2(A_{3.9, 5.1}^{+})},
\end{split}
\end{equation}
where in the last line we have applied the Poincar\'{e} inequality for functions vanishing on the lower boundary~$\partial^0 A_{3.9, 5.1}^{+}$.

It remains to control the norm $\|\nabla (u - u^{1.1})\|_{L^2(A_{3.9, 5.1}^{+})}$ in \eqref{gul2}.
First we interpolate between $L^1$ and $L^{2+\gamma}$.
Letting $q = \frac{2(1 + \gamma)}{2+\gamma}$, we have
\begin{equation}
\label{gul3}
\begin{split}
\|\nabla (u - u^{1.1})\|_{L^2(A_{3.9, 5.1}^{+})} 
&\leq C \|\nabla u \|_{L^{2+\gamma}(A_{3.9, 5.1 \cdot 1.1})}^{1/q} \|\nabla (u - u^{1.1})\|_{L^1(A_{3.9, 5.1}^{+})}^{1/q'}.
\end{split}
\end{equation}
From \eqref{gul3},
by Corollary~\ref{cor:annuli} we deduce
\begin{equation}
\label{gul4}
\begin{split}
\|\nabla (u - u^{1.1})\|_{L^2(A_{3.9, 5.1}^{+})} &\leq C_{\varepsilon_0} \|u \|_{L^{1}(A_{3, 6}^{+})}^{1/q} \|\nabla (u - u^{1.1})\|_{L^1(A_{3.9, 5.1}^{+})}^{1/q'} \\
& \leq \delta \|u \|_{L^{1}(A_{3, 6}^{+})} + C_{\varepsilon_0}\delta^{-q'/q} \|\nabla (u - u^{1.1})\|_{L^1(A_{3.9, 5.1}^{+})}.
\end{split}
\end{equation}
By interpolation, 
%for Sobolev spaces (for instance, see~\cite[Theorem~7.8]{GilbargTrudinger})
we also have that
\begin{equation}
\label{gul5}
\begin{split}
&\|\nabla (u - u^{1.1})\|_{L^1(A_{3.9, 5.1}^{+})} \\
& \quad \quad  \leq C \delta^{1 + \frac{q'}{q}} \|D^2(u - u^{1.1})\|_{L^1(A_{3.9, 5.1}^{+})} + C \delta^{-1-\frac{q'}{q}} \|u - u^{1.1}\|_{L^1(A_{3.9, 5.1}^{+})}.
\end{split}
\end{equation}
Hence, applying \eqref{gul5} in \eqref{gul4}, we obtain
the following estimate for the Dirichlet energy
\begin{equation}
\label{gul6}
\begin{split}
&\|\nabla (u - u^{1.1})\|_{L^2(A_{3.9, 5.1}^{+})} \\
&\leq \delta \|u \|_{L^{1}(A_{3, 6}^{+})} + C_{\varepsilon_0} \delta \|D^2 u\|_{L^1(A_{3.9, 5.1}^{+})} + C_{\varepsilon_0} \delta^{-1-2\frac{q'}{q}} \|u - u^{1.1}\|_{L^1(A_{3.9, 5.1}^{+})}.
\end{split}
\end{equation}
Finally, since $\frac{\d}{\d \tau} u^{\tau}(x) = r u_r(\tau x)$, we have that
$u(x) - u^{1.1}(x) = - r \int_{1}^{1.1} u_r(\tau x) \d \tau$ 
and hence
\begin{equation}
\label{gul7}
\|u - u^{1.1}\|_{L^1(A_{3.9, 5.1}^{+})} 
\leq C \|u_r\|_{L^1(A^{+}_{3.9, 5.1\cdot 1.1})}
\leq C \|u_r\|_{L^1(A_{3, 6}^{+})}.
\end{equation}
Using \eqref{gul7} in \eqref{gul6}, and by \eqref{gul2} we deduce the claim.

\vspace{3mm}\noindent
\textbf{Step 3.} \textit{Conclusion.}

Combining Steps 1 and 2 in \eqref{go3}, 
for $\delta \in (0,1)$ and (say) $\varepsilon \leq 1$, we have
\begin{equation}
\label{ple1}
\begin{split}
\|u\|_{L^1(A_{4.7, 4.8}^{+})}
&\leq 
C \delta \|u \|_{L^{1}(A_{3, 6}^{+})}
+ C \left( \varepsilon + \delta \right) \|D^2 u\|_{L^1(A_{3.8, 5.8}^{+})} 
+ C \varepsilon \|\nabla u\|_{L^1(A_{4, 5.5}^{+})}\\
& \quad \quad \quad +C \delta^{-1-2\frac{2+\gamma}{\gamma}} \|u_{r}\|_{L^1(A_{3,6}^{+})}.
\end{split}
\end{equation}

Thanks to Corollary~\ref{cor:annuli} we can control the Hessian and gradient errors in \eqref{ple1} by the $L^1$ norm of the function.
Namely, we have
$\|D^2 u\|_{L^1(A_{3.8, 5.8}^{+})}  \leq C \|u\|_{L^1(A_{3, 6}^{+})}$
and by H\"{o}lder's inequality
$\|\nabla u\|_{L^1(A_{4, 5.5}^{+})} \leq C \|\nabla u\|_{L^{2+\gamma}(A_{4, 5.5}^{+})} \leq C \|u\|_{L^1(A_{3, 6}^{+})}$.
It follows that
\begin{equation}
\label{ple2}
\begin{split}
 \|u\|_{L^1(A_{4.7, 4.8}^{+})} & \leq
C \delta \|u \|_{L^{1}(A_{3, 6}^{+})} + C \delta^{-1-2\frac{2+\gamma}{\gamma}} \|u_{r}\|_{L^1(A_{3,6}^{+})} \quad \text{ for } \varepsilon \leq \delta.
\end{split}
\end{equation}
Proceeding as in~\cite{CabreRadial},
we write $u(s\sigma) = u(t \sigma) - \int_{s}^{t} u_r(\rho \sigma) \d \rho$
for $s \in (3, 6)$, $t \in (4.7, 4.8)$, and $\sigma \in \mathbb{S}^{n-1}$. 
Multiplying by $s^{n-1}$ and integrating in $\sigma \in \partial^+ B_1^+$, we see that
\[
\begin{split}
\int_{\partial^+ B^+_s} |u| d \mathcal{H}^{n-1} &\leq (s/t)^{n-1}\int_{\partial^{+} B^{+}_t} |u| d \mathcal{H}^{n-1}
+ \int_{3}^{6} (s/\rho)^{n-1}\int_{\partial^{+} B^{+}_{\rho}} |u_r| d \mathcal{H}^{n-1} d \rho\\
& \leq 2^{n-1} \left( 
\int_{\partial^{+} B^{+}_t} |u| d \mathcal{H}^{n-1} + 
\int_{A_{3,6}^{+}} |u_r| d x,
\right),
\end{split}
\]
where in the last term we have used coarea formula.
Integrating in $\int_{3}^{6} d s \frac{1}{0.1} \int_{4.7}^{4.8} d t$,
again by coarea formula, we conclude
%\[
%\int_{A_{3,6}^+} |u| dx \leq 2^{n-1} 3\left( 
%\frac{1}{0.1} \int_{A^+_{4.7,4.8}} |u| d \mathcal{H}^{n-1} + 
%\int_{A_{3,6}^{+}} |u_r| d x
%\right),
%\]
%and hence
\begin{equation}
\label{ple3}
\|u\|_{L^1(A_{3, 6}^{+})} \leq C \left( \|u\|_{L^1(A_{4.7, 4.8}^{+})} + \|u_r\|_{L^1(A^{+}_{3,6})} \right).
\end{equation}
Combining \eqref{ple3} and \eqref{ple2}, we deduce
\[
 \|u\|_{L^1(A_{3, 6}^{+})} \leq
C \delta \|u \|_{L^{1}(A_{3, 6}^{+})} + C \delta^{-1-2\frac{2+\gamma}{\gamma}} \|u_{r}\|_{L^1(A_{3, 6}^{+})} \quad \text{ for } \varepsilon \leq \delta,
\]
and choosing $\delta > 0$ universal small in this last inequality, we can absorb the $L^1$ norm of $u$ into the left-hand side, concluding the proof.
\end{proof}

\section{Boundary $C^\alpha$ estimate}
\label{section:holder}

We prove the H\"{o}lder estimate in half-balls.
The proof amounts to combining Propositions~\ref{prop:rad} and~\ref{prop:l1rad} with Theorem~\ref{thm:previous} to deduce the decay of the weighted Dirichlet energy, and to then apply a scaling an covering argument.

%At this point, the proof amounts to combining Propositions~\ref{prop:rad} and~\ref{prop:l1rad} with Theorem~\ref{thm:previous}, to deduce  a decay of the weighted Dirichlet energy and applying a scaling and covering argument.

\begin{proof}[Proof of the H\"{o}lder estimate \eqref{holder} in Theorem~\ref{thm:holder}]
We may assume that $3 \leq n \leq 9$.
Indeed, when $n = 2$,
we recover the estimate by applying Theorem~\ref{thm:holder} to the function 
$\widetilde{u}(x_1, x_2, x_3) := u(x_2, x_3)$,
a stable solution to the elliptic equation $\elliptic \widetilde{u}_{x_1 x_1} + L \widetilde{u} = f(\widetilde{u})$ in $B^+_1\subset \R^{3}$,
where $L$ acts only in the $(x_2, x_3)$ variables.
Similarly, when $n = 1$, one considers the function $\widetilde{u}(x_1, x_2, x_3) := u(x_3)$.

Throughout the proof, $C$ denotes a generic universal constant unless stated otherwise.
The proof is divided into three steps.
In Step~1, we prove the weighted Dirichlet energy decay under the assumption $A(0) = \id$.
Later, in Step~2, we remove this assumption and prove a $C^{\alpha}$ estimate in universally small balls.
Finally, in Step~3, we deduce the theorem by a scaling an covering argument.

\vspace{3mm}\noindent
\textbf{Step 1:}
{\it 
Under the assumption that
\[
A(0) = \id \quad \text{ and } \quad 
\|D A\|_{L^{\infty}(B_{1}^{+})} + \|\vv\|_{L^{\infty}(B_{1}^{+})}
\leq \varepsilon,
\]
we prove that if $\varepsilon \leq \varepsilon_0$, then
\begin{equation}
\label{thedecay}
\int_{B_{\rho}^{+}} r^{2-n} |\nabla u|^2 \d x \leq C \|u\|^2_{L^1(B_1^{+})} \rho^{2\alpha} \quad \text{ for all } \rho \leq 1/16,
\end{equation}
where $\varepsilon_0 > 0$, $\alpha > 0$, and $C$ are universal constants.
}

First we write the weighted Dirichlet integral as an infinite sum on dyadic annuli, applying Corollary~\ref{cor:annuli} on each annulus.
We treat the case $\rho = 1/2$ and recover the result for general $\rho$ by rescaling.
This is the same approach used for the interior estimates in~\cite{ErnetaInterior}.

Letting $r_j := 2^{-j}$ for $j \geq 0$, we have that
\begin{equation}
\label{ann1}
\begin{split}
\int_{B_{1/2}^{+}} r^{2-n} |\nabla u|^2 \d x &
= \sum_{j = 0}^{\infty} \int_{A^{+}_{r_{j+2}, r_{j+1}}} r^{2-n} |\nabla u|^2 \d x
\leq C \sum_{j = 0}^{\infty} r_j^{2-n} \int_{A^{+}_{r_{j+2}, r_{j+1}}} |\nabla u|^2 \d x.
\end{split}
\end{equation}
We control each of the summands in \eqref{ann1}.
Combining Corollary~\ref{cor:annuli} and Proposition~\ref{prop:l1rad} applied to the functions $u(r_{j} \cdot)$,
and by H\"{o}lder inequality, it follows that
\begin{equation}
\label{almann}
r_j^{2-n} \int_{A^{+}_{r_{j+2}, r_{j+1}}} |\nabla u|^2 \d x 
 \leq C r_j^{2-n} \int_{A^{+}_{r_{j+3}, r_{j}}} u_r^2 \d x \quad \text{ for } \varepsilon\leq \varepsilon_0.
\end{equation}
Therefore, using \eqref{almann} in \eqref{ann1}, we deduce the estimate
\begin{equation}
\label{ann2}
\begin{split}
\int_{B_{1/2}^{+}} r^{2-n} |\nabla u|^2 \d x
& \leq C \sum_{j = 0}^{\infty} r_j^{2-n} \int_{A^{+}_{r_{j+3}, r_{j}}} u_r^2 \d x \leq C \int_{B_1^{+}} r^{2-n} u_r^2 \d x
\quad \text{ for } \varepsilon\leq \varepsilon_0.
\end{split}
\end{equation}

Applying \eqref{ann2} to the functions $u(2\rho \cdot)$, we obtain
\[
\begin{split}
&\int_{B_{\rho}^{+}} r^{2-n} |\nabla u|^2 \d x \leq C \int_{B_{2\rho}^{+}} r^{2-n} u_r^2 \d x \quad \text{ for all } \rho \leq 1/2 \text{ and } \varepsilon \leq \varepsilon_0,
\end{split}
\]
and by Proposition~\ref{prop:rad} (with $2 \rho$ in place of $\rho$), we conclude
\begin{equation}
\label{ann4}
\begin{split}
\int_{B^{+}_{\rho}} r^{2-n} |\nabla u|^2 \d x &\leq C \int_{B^{+}_{4\rho} \setminus B^{+}_{2\rho}} r^{2-n} |\nabla u|^2 \d x + C \varepsilon \int_{B^{+}_{8\rho}} r^{3-n}|\nabla u|^2 \d x\\
& \quad \quad \quad \quad \quad \quad \quad \quad \quad \quad \quad \quad \quad \quad \quad \quad  \text{ for all } \rho \leq 1/8 \text{ and } \varepsilon \leq \varepsilon_0.
\end{split}
\end{equation}
Splitting the last integral into $B^{+}_{8\rho} = (B^{+}_{8\rho} \setminus B^{+}_{\rho}) \cup B^{+}_{\rho}$,
since $r^{3-n} \leq r^{2-n}$ in $B_1^+$ and using that the bounds $\rho \leq 1/8$ and $\varepsilon \leq \varepsilon_0$ are universal, 
from \eqref{ann4} we deduce
\[
\begin{split}
\int_{B^{+}_{\rho}} r^{2-n} |\nabla u|^2 \d x 
&\leq C \int_{B^{+}_{8\rho} \setminus B^{+}_{\rho}} r^{2-n} |\nabla u|^2 \d x + C \varepsilon \int_{B^{+}_{\rho}} r^{2-n}|\nabla u|^2 \d x\\
& \quad \quad \quad \quad \quad \quad \quad \quad \quad \quad \quad \quad \quad \quad  \text{ for all } \rho \leq 1/8 \text{ and } \varepsilon \leq \varepsilon_0.
\end{split}
\]
Taking $\varepsilon_0 > 0$ universal smaller if necessary, 
we can absorb the last integral into the left-hand side, which yields
\begin{equation}
\label{ann6}
\begin{split}
\int_{B^{+}_{\rho}} r^{2-n} |\nabla u|^2 \d x 
&\leq C \int_{B^{+}_{8\rho} \setminus B^{+}_{\rho}} r^{2-n} |\nabla u|^2 \d x \quad \text{ for all } \rho \leq 1/8 \text{ and } \varepsilon \leq \varepsilon_0.
\end{split}
\end{equation}
Hole-filling \eqref{ann6}, it follows that
\begin{equation}
\label{ann7}
\begin{split}
\int_{B^{+}_{\rho}} r^{2-n} |\nabla u|^2 \d x 
&\leq \theta \int_{B^{+}_{8\rho}} r^{2-n} |\nabla u|^2 \d x \quad \text{ for all } \rho \leq 1/8 \text{ and } \varepsilon \leq \varepsilon_0,
\end{split}
\end{equation}
where $\theta = \frac{C}{1 + C} \in (0, 1)$ is universal.
Iterating \eqref{ann7}, for $8^{-(k+1)} < \rho \leq 8^{-k}$ we deduce
\[
\begin{split}
\int_{B^{+}_{\rho}} r^{2-n} |\nabla u|^2 \d x 
&\leq \theta^k \int_{B^{+}_{8^k\rho}} r^{2-n} |\nabla u|^2 \d x \leq \frac{1}{\theta} \rho^{2\alpha} \int_{B^{+}_1} r^{2-n} |\nabla u|^2 \d x,
\end{split}
\]
with $\alpha = -\frac{1}{2} \log_{8} \theta > 0$, and therefore
\begin{equation}
\label{ann8}
\int_{B^{+}_{\rho}} r^{2-n} |\nabla u|^2 \d x \leq C \rho^{2 \alpha}\int_{B^{+}_1} r^{2-n} |\nabla u|^2 \d x \quad \text{ for all } \rho \leq 1/8 \text{ and } \varepsilon \leq \varepsilon_0.
\end{equation}
Splitting the integral in the right-hand side of~\eqref{ann8} into $B^{+}_1 = (B^{+}_1 \setminus B^{+}_{1/8}) \cup B^{+}_{1/8}$
and applying~\eqref{ann6} with $\rho = 1/8$ to bound the solid integral in $B^+_{1/8}$ results in the bound
%Finally, we can estimate the 
%integral in the right-hand side of \eqref{ann8}
%by splitting $B^{+}_1 = (B^{+}_1 \setminus B^{+}_{1/8}) \cup B^{+}_{1/8}$ and applying \eqref{ann6} with $\rho = 1/8$
%to bound the integral in the annulus, which results in
\begin{equation}
\label{ann9}
\int_{B^{+}_{\rho}} r^{2-n} |\nabla u|^2 \d x \leq C \rho^{2 \alpha}\|\nabla u\|_{L^2(B_1^{+})}^2 \quad \text{ for all } \rho \leq 1/8 \text{ and } \varepsilon \leq \varepsilon_0.
\end{equation}
Applying 
the energy estimate~\eqref{higherint} in Theorem~\ref{thm:previous} (rescaled) to \eqref{ann9} now yields the claim.

\vspace{3mm}\noindent
\textbf{Step 2:}
{\it 
Assuming
\[
\|D A\|_{L^{\infty}(B_{1}^{+})} + \|\vv\|_{L^{\infty}(B_{1}^{+})} \leq \varepsilon,
\]
we prove that if $\varepsilon \leq \varepsilon_0$, then 
\[
\|u\|_{C^{\alpha}(\overline{B^{+}_{\rho_{0}}})} \leq C \| u\|_{L^1(B^{+}_1)},
\]
where $\varepsilon_0 > 0$, $\alpha > 0$, $\rho_{0} > 0$, and $C$ are universal.
}

%Notice that 
For each $y \in \partial^0 B_1^+$ and each half-ball $B^+_{d}(y) \subset B_1^{+}$, 
we can find a rotation matrix $R = R(y) \in SO(n)$ such that the function
$u^{y, d}(x) := u\big(y + \frac{d}{\sqrt{\bounded}} A^{1/2}(y) R  x\big)$
satisfies $u = 0$ on $\partial^0 B_1^{+}$,
and is a stable solution to a semilinear equation in $B_1^{+}$ with coefficients
\[
\textstyle
A^{y, d}(x) := R^{T} A^{-1/2}(y) A\big(y + \frac{d}{\sqrt{\bounded}} A^{1/2}(y) Rx \big)A^{-1/2}(y) R,
\]
\[
\textstyle
\vv^{y,d}(x) := \frac{d}{\sqrt{\bounded}}R^T A^{-1/2}(y) \vv\big(y + \frac{d}{\sqrt{\bounded}} A^{1/2}(y) R x \big).
\]
The new matrix is uniformly elliptic $\frac{\elliptic}{\bounded} \leq A^{y,d} \leq \frac{\bounded}{\elliptic}$ with $A^{y, d}(0) = \id$, 
and the coefficients can be bounded by
$\|D A^{y,d}\|_{L^{\infty}(B_{1}^{+})} + \|\vv^{y,d}\|_{L^{\infty}(B_{1}^{+})} \leq C d \left( \|D A\|_{L^{\infty}(B_{1}^{+})} + \|\vv\|_{L^{\infty}(B_{1}^{+})} 
\right)$.
Choosing $d > 0$ universal sufficiently small so that $C d \leq 1$, we further have
\[
\|D A^{y, d}\|_{L^{\infty}(B_1^+)} + \|\vv^{y, d}\|_{L^{\infty}(B_1^+)} \leq \varepsilon \quad \text{ for all } y \in \partial^0 B^{+}_{1-d}.
\]
Since $\varepsilon \leq \varepsilon_0$ (with $\varepsilon_0 > 0$ as in Step 1), by \eqref{thedecay} it follows that
\[
\int_{B^{+}_{\rho}}r^{2-n}|\nabla u^{y, d}|^2 \d x \leq C \| u^{y, d}\|_{L^1(B^{+}_1)}^2 \rho^{2\alpha} \quad \text{ for } y \in \partial^0 B^{+}_{1-d} \, \text{ and } \, \rho \leq 1/8,
\]
and using that
$r^{2-n} \geq \rho^{2-n}$ in $B_{\rho}^{+}$,
we also have that
\begin{equation}
\label{badalm}
\int_{B^{+}_{\rho}}|\nabla u^{y, d}|^2 \d x \leq C \| u^{y, d}\|_{L^1(B^{+}_1)}^2 \rho^{2\alpha+n-2} \quad \text{ for } \, y \in \partial^0 B^{+}_{1-d} \, \text{ and }\,  \rho \leq 1/8.
\end{equation}

We now write
%Let us express 
\eqref{badalm} in terms of the original function $u$.
By the change of variables $z = y + \frac{d}{\sqrt{\bounded}} A^{1/2}(y) R x$
and by uniform ellipticity, 
using that $B^{+}_{\sqrt{\elliptic }\rho} \subset A^{1/2}(y) R (B^{+}_{\rho})$,
the left-hand side of \eqref{badalm} can be bounded from below by
\begin{equation}
\label{badalm1}
\begin{split}
\int_{B_{\rho}^{+}} |\nabla u^{y, d}|^2 \d x 
%&= \frac{d^{2-n}}{\bounded^{1-n/2}} \det(A(y))^{-1/2} \int_{y + \frac{d}{\sqrt{\bounded}}A^{1/2}(y) R(B^{+}_{\rho})} |\nabla u|_{A(y)}^2 \d z\\
&\geq c \, d^{2-n} \int_{B^{+}_{d \sqrt{\frac{\elliptic}{\bounded}}\rho}(y)} |\nabla u|^2 \d z,
\end{split}
\end{equation}
where 
$c > 0$ is a universal constant.
Similarly, we have
$\|u^{y,d}\|_{L^1(B_1^{+})} \leq C d^{-n} \|u\|_{L^1(B_1^{+})}$
and from \eqref{badalm} and \eqref{badalm1} we deduce
\begin{equation}
\label{badalm2}
\int_{B^{+}_{d \sqrt{\frac{\elliptic}{\bounded}}\rho}(y)} |\nabla u|^2 \d z \leq C d^{-2} \|u\|_{L^1(B^{+}_1)}^2 \rho^{n-2+2\alpha} \quad \text{ for }  
y \in \partial^0 B^{+}_{1-d}
\, \text{ and } \,  \rho \leq 1/8.
\end{equation}
Let $\rho_{0} := \frac{d}{16} \sqrt{\frac{\elliptic}{\bounded}}$.
Taking $d$ smaller if necessary, we may assume that $B^{+}_{2\rho_{0}} \subset B^{+}_{1-d}$.
Dividing $\rho$ by $d \sqrt{\frac{\elliptic}{\bounded}}$ in \eqref{badalm2}, using that $d$ is universal, and by Cauchy--Schwarz, we obtain
\begin{equation}
\label{badalm4}
\int_{B^{+}_{\rho}(y)} |\nabla u| \d z \leq C \| u\|_{L^1(B^{+}_1)} \rho^{n-1+\alpha} 
\quad \text{ for }  
y \in \partial^0 B^{+}_{2\rho_{0}}
\, \text{ and } \,  \rho \leq 2\rho_{0}.
\end{equation}

With \eqref{badalm4} in hand, we are finally ready to prove the boundary H\"{o}lder estimate.

Let $x = (x', x_n) \in B_{\rho_{0}}^{+} \subset \R^n \times \R_+$.
Since $u = 0$ on $\partial^0 B_1^{+}$,
by the Poincar\'{e} inequality
\begin{equation}
\label{poingo}
\|u\|_{L^1(B^{+}_{2 x_n}(x',0))} \leq C x_n \|\nabla u\|_{L^1(B^{+}_{2 x_n}(x',0))}.
\end{equation}
Applying \eqref{badalm4} with $\rho = 2x_n$ and $y = (x', 0)$, from \eqref{poingo} we deduce
\begin{equation}
\label{triv}
\|u\|_{L^1(B^{+}_{2 x_n}(x',0))} 
\leq C \|u\|_{L^1(B_1^{+})} (x_n)^{n+\alpha}.
\end{equation}
By the interior H\"{o}lder estimates in \cite[Theorem~1.1]{ErnetaInterior},
in the ball $B_{x_n}(x) \subset B_1^{+}$ 
we have
\begin{equation}
\label{triv3}
\|u\|_{L^{\infty}(B_{x_n/2}(x))} + (x_n)^{\alpha} [u]_{C^{\alpha}(\overline{B}_{x_n/2}(x))} 
\leq C \|u\|_{L^1(B_{x_n}(x))} (x_n)^{-n},
\end{equation}
where $\alpha > 0$ and $C$ are universal constants (since we are assuming a universal bound $\varepsilon \leq \varepsilon_0$ on the coefficients).
Since $B_{x_n}(x) \subset B^{+}_{2x_n}(x', 0)$, combining \eqref{triv3} and \eqref{triv} gives
\begin{equation}
\label{triv2}
\|u\|_{L^{\infty}(B_{x_n/2}(x))} + (x_n)^{\alpha} [u]_{C^{\alpha}(\overline{B}_{x_n/2}(x))} 
\leq C \|u\|_{L^1(B_1^{+})} (x_n)^{\alpha} \quad \text{ for } x \in B_{\rho_{0}}^{+}.
\end{equation}

In particular, from \eqref{triv2} it follows that
$|u(x)| \leq C \|u\|_{L^1(B_1^{+})} (x_n)^{\alpha}$ in $B_{\rho_{0}}^{+}$, and we have controlled the $L^{\infty}$ norm of $u$ in $B_{\rho_{0}}^{+}$.
To bound the H\"{o}lder norm in $B_{\rho_{0}}^{+}$, consider $x, y \in B_{\rho_{0}}^{+}$ such that $x \neq y$.
Without loss of generality 
we may assume $y_n \leq x_n$.
On the one hand,
if $|x- y| \leq x_n/2$, then from \eqref{triv2} we deduce
\[
\frac{|u(x)-u(y)|}{|x-y|^{\alpha}} \leq [u]_{C^{\alpha}(\overline{B}_{x_n/2}(x))} \leq C \|u\|_{L^1(B_1^{+})}.
\]
On the other hand,
if $|x- y| > x_n/2$, then by the $L^{\infty}$ estimates in \eqref{triv2} it follows that
\[
\begin{split}
|u(x) - u(y)| &\leq u(x) + u(y) \leq C \|u\|_{L^1(B_1^{+})} \left( (x_n)^{\alpha} + (y_n)^{\alpha} \right) \leq C \|u\|_{L^1(B_1^{+})} (x_n)^{\alpha}\\
&\leq C \|u\|_{L^1(B_1^{+})}|x-y|^{\alpha}.
\end{split}
\]
Combined, the two inequalities above yield a bound for $[u]_{C^{\alpha}(B_{\rho_{0}}^{+})}$, which was the claim.

\vspace{3mm}\noindent
\textbf{Step 3:}
{\it Conclusion. }

Arguing as in the proof of the higher integrability estimate in our previous paper~\cite{ErnetaBdy1},
by a scaling and covering argument
and using the interior estimates in \cite[Theorem~1.1]{ErnetaInterior},
is is not hard to deduce the theorem in its final form from Step 2.

\end{proof}

\section{Approximation and proof in $C^{1,1}$ domains}
\label{section:approximation}

Here we give the complete proof of our main result, Theorem~\ref{thm:c11}, which establishes a priori estimates in $C^{1,1}$ domains. 
By an approximation argument (carried out in the proof at the end of this section), it will suffice to obtain these estimates in smooth domains.
%
%Here we give the complete proof of our main result, Theorem~\ref{thm:c11}, which establishes a priori estimates in $C^{1,1}$ domains. 
%By an approximation argument (carried out in the proof at the end of this section), it will suffice to prove this result in smooth domains.
%
\medskip

First, we comment on the invariance of our class of solutions under general transformations flattening the boundary.
This fact will allow us to reduce the problem in smooth domains to half-balls, where we already obtained a priori estimates in Theorem~\ref{thm:holder} above.
\medskip

%A smooth bounded domain $\Omega \subset \R^n$ can be written as the superlevel set of a smooth function $\Phi \in C^{\infty}(\R^n)$, that is,
Given a smooth bounded domain $\Omega \subset \R^n$, we can write it as the superlevel set of a smooth function $\Phi \in C^{\infty}(\R^n)$
with $\nabla \Phi \neq 0$ on $\partial \Omega$ and 
\[
\Omega = \{x \in \R^n \colon \Phi(x) > 0\} = \{\Phi > 0\}.
\]
%where $\Phi$ can be chosen such that $\nabla \Phi \neq 0$ on $\partial \Omega$.
% (see Appendix~\ref{app:approximation}).
For each $x_0 \in \partial \Omega$, using $\Phi$ we can construct a smooth map $\Psi = \Psi_{x_0} \colon \R^n \to \R^n$ which flattens out the boundary $\partial \Omega$ in a neighborhood of $x_0$.
Assuming that $\nabla \Phi(x_0) = |\nabla \Phi(x_0)| e_n$ by rotation, and writing $x = (x', x_n) \in \R^{n-1}\times \R$, we can take $\Psi(x) = ( (x-x_0)', \frac{\Phi(x)}{|\nabla \Phi(x_0)|})$.
%As explained in Appendix~\ref{app:approximation}, we can find some small numbers 
This map is a diffeomorphism in a ball $B_{R_2}(x_0) \subset \R^n$ such that
\[
\Psi(B_{R_2}(x_0) \cap \Omega) \subset \R^n_{+} \quad \text{ and } \quad \Psi(B_{R_2}(x_0) \cap \partial \Omega) \subset \partial \R^n_{+},
\]
and satisfies the inclusions
\begin{equation}
\label{eq:lmao}
\Psi(B_{R_1}(x_0) \cap \Omega) 
= B_{\rho/2}^{+} \cap \Psi(B_{R_1}(x_0)),
\quad \text{ and } \quad 
\Psi(B_{R_2}(x_0) \cap \Omega) \cap B_{\rho} = B_{\rho}^{+},
\end{equation}
for some small numbers 
$0 < R_1 < R_2$ and $\rho > 0$
depending only on  $\|\nabla \Phi\|_{C^{0,1}(\R^n)}$ and $\||\nabla \Phi|^{-1}\|_{L^{\infty}(\partial \Omega)}$, but not on the point $x_0$.
We give the details in Appendix~\ref{app:approximation} (see Lemma~\ref{lemma:prelim} and the discussion preceding it).

Consider now the differential operator $L u(x) = a_{ij}(x) u_{ij} + b_i(x) u_i$ 
%introduced in \eqref{def:op},
acting on functions $u$ in $C^2(B_{R_2}(x_0) \cap \overline{\Omega})$.
% with $R_2$ as in~\eqref{eq:lmao}.
In the new coordinates $\widetilde{x} = \Psi(x)$, the function $\widetilde{u} = u \circ \Psi^{-1}$ satisfies
\[
\widetilde{L} \widetilde{u} := (L u )(\Psi^{-1}(\widetilde{x})) = \widetilde{a}_{ij}(\widetilde{x}) \widetilde{u}_{ij} + \widetilde{b}_{i}(\widetilde{x}) \widetilde{u}_{i},
\]
where the new coefficients are given by
\[
\widetilde{a}_{ij} \circ \Psi(x) = 
a_{kl}(x) \partial_k \Psi_i(x) \partial_l \Psi_{j}(x) 
\]
and
\[
\widetilde{b}_{i} \circ \Psi(x) = 
b_{k}(x) \partial_k \Psi_i(x) + a_{jk}(x) \partial^2_{jk} \Psi_i(x).
\]

When $0 < \elliptic \leq A(x) \leq \bounded$, taking $R_2 = R_2 (\|\nabla \Phi\|_{C^{0,1}(\R^n)}, \||\nabla \Phi|^{-1}\|_{L^{\infty}(\partial \Omega)})> 0$ 
smaller if necessary,
we may assume that the new coefficient matrix $\widetilde{A}(\widetilde{x}) = (\widetilde{a}_{ij}(\widetilde{x}))$ is also uniformly elliptic with (say)
\[
0 < 
\frac{1}{2} \elliptic \leq \widetilde{A}(\widetilde{x}) \leq \frac{3}{2} \bounded.\footnote{Indeed, since $\widetilde{A}(\widetilde{x}) = D \Psi(x)^T A(x) D \Psi(x)$, by ellipticity $\elliptic |D \Psi(x) p |^2 \leq \widetilde{A}(\widetilde{x}) p \cdot p \leq \bounded |D \Psi(x) p|^2$. Moreover, since $|D \Psi(x) p|^2 = |p'|^2 + |\frac{\nabla' \Phi(x)}{\partial_n \Phi(x_0) }\cdot p' + \frac{\partial_n \Phi(x)}{\partial_n \Phi(x_0)}  p_n|^2$, choosing $R_2 > 0$ smaller such that $\frac{9}{10} \leq \frac{\partial_n \Phi(x)}{\partial_n \Phi(x_0)} \leq \frac{|\nabla \Phi(x)|}{\partial \Phi(x_0)} \leq \frac{11}{10}$ and $\frac{|\nabla' \Phi(x)|}{\partial \Phi(x_0)} \leq \frac{1}{100}$, it is easy to check that $\frac{1}{2} |p|^2 \leq |D \Psi(x) p|^2 \leq \frac{3}{2} |p|^2$ and the claim follows.}
\]

If $u$ is a stable solution of $-L u = f(u)$ in $ B_{R_2}(x_0) \cap \Omega$,
then $\widetilde{u}$ is a stable solution of $- \widetilde{L} \widetilde{u} = f(\widetilde{u})$ in $\Psi(B_{R_2}(x_0) \cap \Omega) \subset \R^n_{+}$.
To see that $\widetilde{u}$ is stable, notice that if $\varphi > 0$ is the function in the definition of stability~\eqref{stable:point},
then $\widetilde{\varphi} = \varphi \circ \Psi^{-1}$ satisfies the same condition with respect to $\widetilde{L}+ f(\widetilde{u})$.

Finally, notice that the new coefficient norms
%norms of the coefficients
$\| D \widetilde{A}\|_{L^{\infty}}$ and $\|\widetilde{\vv} \|_{L^{\infty}}$ involve the norms $\| D A\|_{L^{\infty}}$, $\|\vv \|_{L^{\infty}}$, $\|D \Phi\|_{L^{\infty}}$, and $\|D^2 \Phi\|_{L^{\infty}}$.
This dependence will be crucial to extend our results to $C^{1,1}$ domains, which are described by a $C^{1,1} = W^{2, \infty}$ function $\Phi$.

Thanks to all these preliminaries, we are now in a position to upgrade our estimates from half-balls to hold in $C^{1,1}$ domains:
%Now, thanks to all the preliminaries, we can finally upgrade our estimates in half-balls to $C^{1,1}$ domains by an approximation argument:

\begin{proof}[Proof of Theorem~\ref{thm:c11}]

We proceed in two steps.
First, we prove the theorem in smooth domains, for solutions in $W^{3,p}(\Omega)$,
i.e., 
with weak derivatives that are integrable \emph{up to the boundary}.
Then, we approximate our $C^{1,1}$ domain from the interior by smooth domains and apply the first step on a suitable sequence of stable solutions.

As explained in Appendix~\ref{app:approximation},
for $\Omega \subset \R^n$ of class $C^{1, 1}$,
there is a function $\Phi \in C^{1,1}(\R^n)$ such that $\Omega = \{\Phi > 0\}$ and $\nabla \Phi \neq 0$ on $\partial \Omega$.
%given a bounded domain $\Omega \subset \R^n$ of class $C^{1, 1}$, there is a function $\Phi \in C^{1,1}(\R^n)$ such that $\Omega = \{\Phi > 0\}$ and $\nabla \Phi \neq 0$ on $\partial \Omega$.
When $\Omega$ is smooth, $\Phi$ can be chosen to be in $C^{\infty}$.
% as mentioned above.
The main purpose of the function $\Phi$ is to quantify the dependence of our bounds on the domain.
In the proof below, we denote the diameter of $\Omega$ by $\diam(\Omega) := \sup_{x, y \in \Omega} |x-y|$.

\vspace{3mm}
\noindent
\textbf{Step 1:}
{\it 
Under the additional assumptions that $\Omega$ is smooth, 
$L$ satisfies \eqref{reg:b}, and $u \in W^{3,p}(\Omega)$ for some $p > n$,
we prove that
\[
\|\nabla u\|_{L^{2+ \gamma}(\Omega)} \leq C \|u\|_{L^1(\Omega)},
\]
where $\gamma > 0$ is dimensional and
$C$ is a constant depending only on 
$n$, $\elliptic$, 
$\bounded$, $\|D A\|_{L^{\infty}(\Omega)}$, 
$\|\vv\|_{L^{\infty}(\Omega)}$, $\|\nabla \Phi\|_{C^{0,1}(\R^n)}$, $\||\nabla \Phi|^{-1}\|_{L^{\infty}(\partial \Omega)}$, and 
$\diam(\Omega)$.
In addition,
\[
\|u\|_{C^{\alpha}(\overline{\Omega})} \leq C \|u\|_{L^1(\Omega)} \quad \text{ if } n \leq 9,
\]
where $\alpha > 0$ is universal and $C$ is a constant depending only on $n$, $\elliptic$, $\bounded$, $\|D A\|_{L^{\infty}(\Omega)}$, $\|\vv\|_{L^{\infty}(\Omega)}$, $\|\nabla \Phi\|_{C^{0,1}(\R^n)}$, and $\||\nabla \Phi|^{-1}\|_{L^{\infty}(\partial \Omega)}$.
}
\medskip

Let $0 < R_1 < R_2$ be the functions of $\|\nabla \Phi\|_{C^{0,1}(\R^n)}$ and $\||\nabla \Phi|^{-1}\|_{L^{\infty}(\partial \Omega)}$ from~\eqref{eq:lmao}.
%constructed in Lemma~\ref{lemma:prelim} below.

Let $\delta := R_1/3 > 0$. 
Since $\Omega$ is bounded, it is contained in a ball of radius ${\rm diam}(\Omega) < \infty$.
Hence, we can cover $\overline{\Omega}$ by $N$ balls $\{B_i\}_{i}$ of radius $\delta$, where $N \leq C \diam(\Omega)^n \delta^{-n}$ for some dimensional $C$.\footnote{To see this, given $k \in \N$ and $R > 0$, consider the set $A_{k, R} = \{R l /k \colon l \in \Z \text{ with }  -k \leq l \leq k\}^n \subset \R^n$. Notice that $A_{k,R}$ is a discrete set with $N = (2k+1)^{n}$ elements. For each $x \in B_R$, there is a $y \in A_{k, R}$ such that $|x - y| \leq \sqrt{n} \frac{R}{2k}$. Hence, if $\sqrt{n} \frac{R}{2 \delta} < k \leq \sqrt{n} \frac{R}{2 \delta} + 1$, then $B_{R} \subset \cup_{y \in A_{k, R}} B_{\delta}(y)$ and the number of balls can be estimated by $N \leq (\sqrt{n} \frac{R}{\delta} + 3)^n \leq (2 \sqrt{n})^n \left(\frac{R}{\delta}\right)^{n}$ by taking $\delta$ smaller in terms of $R$ and $n$.}
We write $2 B_i$ 
to denote the ball centered at the same point but with twice the radius.
We label the balls $B_i$ in a way such that the first 
$N' < N$ are close to the boundary, 
in the sense that $2 B_i \cap \partial \Omega \neq \emptyset$,
and the remaining $N- N'$ are interior, i.e., they satisfy the inclusion $2 B_i \subset \Omega$.

For each $i \leq N'$, by definition, there is a boundary point $x_i \in 2 B_i \cap \partial \Omega$ 
and hence 
$B_i \subset B_{3 \delta}(x_i) = B_{R_1}(x_i)$.
In particular, flattening the boundary as explained above and applying the energy estimate~\eqref{higherint} in Theorem~\ref{thm:holder} (rescaled), we deduce
\begin{equation}
\label{high:bdyballs}
\|\nabla u\|_{L^{2+\gamma}(B_i \cap \Omega)} \leq \|\nabla u\|_{L^{2+\gamma}(B_{R_1}(x_i) \cap \Omega)} \leq 
C \|u\|_{L^1(B_{R_2}(x_i) \cap \Omega)} \quad \text{ for all } i \leq N',
\end{equation}
where $C = C(n, \elliptic, \bounded, \|D A\|_{L^{\infty}(\Omega)}, \|\vv\|_{L^{\infty}(\Omega)}, \| \nabla \Phi\|_{C^{0,1}(\R^n)}, \||\nabla \Phi|^{-1}\|_{L^{\infty}(\partial \Omega)})$.
Therefore, by \eqref{high:bdyballs} and interior estimates~\cite[Theorem~1.1]{ErnetaInterior}, we conclude
\[
\begin{split}
\|\nabla u\|_{L^{2+\gamma}(\Omega)} &\leq \sum_{i \leq N'}\|\nabla u\|_{L^{2+\gamma}(B_i \cap \Omega)} + \sum_{i > N'} \|\nabla u\|_{L^{2+\gamma}(B_i)} \\
&\leq C \sum_{i \leq N'} \|u\|_{L^1(B_{R_2}(x_i) \cap \Omega)}
+ C \sum_{i > N'}\| u\|_{L^{1}(2 B_i)} \\
& \leq C \|u\|_{L^1(\Omega)},
\end{split}
\]
where $C$ depends only on $n$, $\elliptic$, $\bounded$, $\|D A\|_{L^{\infty}(\Omega)}$, $\|\vv\|_{L^{\infty}(\Omega)}$, $\|\nabla \Phi\|_{C^{0,1}(\R^n)}$, $\||\nabla \Phi|^{-1}\|_{L^{\infty}(\partial \Omega)}$, and $\diam(\Omega)$.
The first claim follows.
\medskip

Assume now that $n \leq 9$.

We prove the $L^{\infty}$ estimate first.
Let $x \in \Omega$.
If ${\rm dist}(x, \partial \Omega) < R_1$, then
$x \in B_{R_1}(x_0) \cap \Omega$ for some $x_0 \in \partial \Omega$.
Hence, flattening the boundary,
by Theorem~\ref{thm:holder} we obtain
\[
|u(x) | \leq \|u\|_{L^{\infty}(B_{R_1}(x_0) \cap \Omega)} \leq C \|u\|_{L^1(B_{R_2} \cap \Omega)} \leq C \|u\|_{L^1(\Omega)},
\]
where 
$C = C(n, \elliptic, \bounded, \|D A\|_{L^{\infty}(\Omega)}, \|\vv\|_{L^{\infty}(\Omega)}, \| \nabla \Phi\|_{C^{0,1}(\R^n)}, \||\nabla \Phi|^{-1}\|_{L^{\infty}(\partial \Omega)})$.
Otherwise, if ${\rm dist}(x, \partial \Omega) \geq R_1$,
then by interior estimates~\cite[Theorem~1.1]{ErnetaInterior} (rescaled) we have
\[
|u(x) | \leq \|u\|_{L^{\infty}(B_{R_1/2}(x)} \leq C \|u\|_{L^1(B_{R_1}(x))} \leq C \|u\|_{L^1(\Omega)},
\]
where again $C = C(n, \elliptic, \bounded, \|D A\|_{L^{\infty}(\Omega)}, \|\vv\|_{L^{\infty}(\Omega)}, \| \nabla \Phi\|_{C^{0,1}(\R^n)}, \||\nabla \Phi|^{-1}\|_{L^{\infty}(\partial \Omega)})$.
The two inequalities yield the desired $L^{\infty}$ bound $\|u\|_{L^{\infty}(\Omega)} \leq C \|u\|_{L^1(\Omega)}$.

Next, we estimate the $C^{\alpha}$ seminorm.
Let $x, y \in \Omega$ with $x \neq y$
and let $\widetilde{\delta} := 2 R_1/3 > 0$.
We distinguish three cases:
\begin{itemize}
\item If $|x-y| \geq \widetilde{\delta}/2$, then 
\[
\frac{|u(x) - u(y)|}{|x-y|^{\alpha}} \leq 2^{\alpha}\widetilde{\delta}^{-\alpha} \left( |u(x)| + |u(y)|\right) \leq 2^{\alpha+1}\widetilde{\delta}^{-\alpha} \|u\|_{L^{\infty}(\Omega)}
\]
and we apply the $L^{\infty}$ estimate $\|u\|_{L^{\infty}(\Omega)} \leq C \|u\|_{L^{1}(\Omega)}$ to control the right-hand side.
\item If $|x- y| < \widetilde{\delta}/2$ and (say) ${\rm dist }(x, \partial \Omega) < \widetilde{\delta}$,
then $x, y \in B_{\widetilde{\delta}/2}(x) \subset B_{\frac{3}{2}\widetilde{\delta}}(x_0) \subset B_{R_1}(x_0)$ for some $x_0 \in \partial \Omega$.
Hence, flattening the boundary,
by Theorem~\ref{thm:holder} we deduce
\[
\begin{split}
\frac{|u(x) - u(y)|}{|x -y|^{\alpha}} &\leq [u]_{C^{\alpha}(\overline{B_{\widetilde{\delta}/2}(x) \cap \Omega})} \leq [u]_{C^{\alpha}(\overline{B_{R_1}(x_0) \cap \Omega})} \leq C \|u\|_{L^1(B_{R_2}(x_0) \cap \Omega)} \\
&\leq C \|u\|_{L^1(\Omega)}.
\end{split}
\]
%where $C$ has the same dependence as above.
\item If $|x- y| < \widetilde{\delta}/2$ and $\min\{ {\rm dist }(x, \partial \Omega), {\rm dist }(y, \partial \Omega) \} \geq \widetilde{\delta}$,
then we have the inclusions
%$x$ and $y$ are in $B_{\widetilde{\delta}/2}(x) \subset B_{\widetilde{\delta}}(x) \subset \Omega$
$x, y \in B_{\widetilde{\delta}/2}(x) \subset B_{\widetilde{\delta}}(x) \subset \Omega$ and by interior estimates~\cite[Theorem~1.1]{ErnetaInterior} (rescaled) we deduce
\[
\frac{|u(x) - u(y)|}{|x -y|^{\alpha}} \leq [u]_{C^{\alpha}(\overline{B}_{\widetilde{\delta}/2}(x) )} \leq C \|u\|_{L^{1}(B_{\widetilde{\delta}}(x))} \leq C \|u\|_{L^1(\Omega)}.
\]
%where $C$ has the same dependence as above.
\end{itemize}
The three inequalities yield the bound
\[
[u]_{C^{\alpha}(\overline{\Omega})} \leq C \|u\|_{L^1(\Omega)},
\]
where
$C = C(n, \elliptic, \bounded, \|D A\|_{L^{\infty}(\Omega)}, \|\vv\|_{L^{\infty}(\Omega)}, \| \nabla \Phi\|_{C^{0,1}(\R^n)}, \||\nabla \Phi|^{-1}\|_{L^{\infty}(\partial \Omega)})$.
This concludes the proof of Step 1.

\vspace{3mm}
\noindent
\textbf{Step 2:}
{\it 
Conclusion: Approximation argument.
}
\medskip

Let $\Omega_k = \{\Phi_k > 0\}$ be an exhaustion of $\Omega = \{\Phi > 0\}$ by $C^{\infty}$ sets,
with norms 
satisfying
\begin{equation}
\label{bdd:norms}
\| \nabla \Phi_k\|_{C^{0,1}(\R^n)} + \||\nabla \Phi_k|^{-1}\|_{L^{\infty}(\partial \Omega_k)} \leq C
\end{equation}
for some constant $C$ depending only on $\Phi$ and $\Omega$ (for the details, see Appendix~\ref{app:approximation}).

For each $k$, let $\vv_i^k := \vv_i * \eta_k$,
where $(\eta_k)_{k}$ is a smooth regularizing sequence such that $\vv_i^k \in C^{\infty}(\overline{\Omega_k})$.
In particular, since $\vv_i \in C(\Omega)$, we have that $\vv_i^k \to \vv_i$ locally uniformly in $\Omega$.
We define the operator
\[
L_{k} := a_{ij}(x) \partial_{ij} + \vv_i^k(x) \partial_i,
\]
where $a_{ij} \in C^{0,1}(\overline{\Omega})$ is the same coefficient as in the statement of the theorem.
By elliptic regularity,
all bounded strong solutions of the problem $- L_k u_k = f(u_k)$ in $\Omega_k$, $u_k = 0$ on $\partial \Omega_k$,
belong to $W^{3,p}(\Omega_k)$.

We will distinguish the two cases $f(0) > 0$ and $f(0) = 0$.
\medskip

\textbf{Case $f(0) > 0$.}
Let $\varepsilon_k \in (0,1)$ with $\varepsilon_k \downarrow 0$. 
For each $k$, we will construct a stable solution $u_k \in W^{3,p}(\Omega_k)$ to the problem
\begin{equation}
\label{eq:k}
\left\{
\begin{array}{cl}
- L_k u_k = (1 - \varepsilon_k) f(u_k) & \text{ in } \Omega_k\\
u_k > 0 & \text{ in } \Omega_k\\
u_k = 0 & \text{ on } \partial \Omega_k
\end{array}
\right.
\end{equation}
by monotone iteration starting at $0$.
This is the so called \emph{minimal solution} of \eqref{eq:k}, i.e., the smallest positive supersolution of \eqref{eq:k},
and it is well-known to be stable.\footnote{To prove that it is stable one argues by contradiction, considering $\phi$, the principal eigenfunction of $L + (1-\varepsilon_k)f'(u_k)$ in $\Omega_k$, and showing that $u_k - \delta \phi$ would be a positive supersolution for $\delta > 0$ sufficiently small; see~\cite{BerestyckiKiselevNovikovRyzhik}.}

For the monotone iteration to converge, we need to exhibit a barrier function.
We claim that $u \in C^{0}(\overline{\Omega}) \cap W^{2, n}_{\rm loc}(\Omega)$ is a barrier for \eqref{eq:k},
in fact,
we have $u > 0$ on $\overline{\Omega_k} \subset \Omega$ and
\[
- L_k u \geq (1 - \varepsilon_k) f(u) \quad \text{ in } \Omega_k.
\]
Indeed, using the equation satisfied by $u$ and by monotonicity of $f$, we have that
\[
- L_k u - (1 - \varepsilon_k) f(u) = \varepsilon_k f(u) + (L - L_k) u \geq \varepsilon_k f(0) - \|\vv - \vv^k\|_{L^{\infty}(\Omega_k)} \|\nabla u\|_{L^{\infty}(\Omega_k)},
\]
and the right-hand side is nonnegative by choosing the regularizing sequence $\eta_k$ in terms of $\varepsilon_k$, $f(0)$, and $\|\nabla u_k\|_{L^{\infty}(\Omega_k)}$
so that $\|b - b^k\|_{L^{\infty}(\Omega_k)}$ is sufficiently small.
Here, recall that by $L^p$ estimates we have $u \in C^{0}(\overline{\Omega}) \cap W^{2, p}_{\rm loc}(\Omega)$ for all $p < \infty$, and hence $u \in C^{1, \alpha}(\overline{\Omega_k})$ for all $\alpha \in (0,1)$ and $k \in \N$.

Next, we carry out the monotone iteration.
Let $k \in \N$ and $u_k^{(0)} = 0$.
For each $l \in \N$, we consider the unique solution $u^{(l)}_{k}$ to the problem $- L_k u_k^{(l)} = (1-\varepsilon_k)f(u^{(l-1)}_k)$ in $\Omega_k$, $u^{(l)}_{k} = 0$ on $\partial \Omega_k$. 
Since $u$ is a barrier, by maximum principle, it is not hard to show that $0 \leq u_k^{(l-1)} \leq u_k^{(l)} \leq u$ in $\Omega_k$, and by global regularity the monotone limit $u_k := \lim_{l \uparrow \infty} u_k^{(l)}$ converges uniformly in $C^2(\overline{\Omega_k})$ norm and solves \eqref{eq:k}.
By construction, $u_k$ is below any supersolution of \eqref{eq:k} and hence it is the minimal solution.

Since $\overline{\Omega_{k}} \subset \Omega_{k+1} \subset \Omega$, by maximum principle we have
\[
0 \leq u_k \leq u_{k+1} \leq u \quad \text{ in } \Omega_k.
\]
Let $u^{\star}(x) := \lim_{k \to \infty} u_k(x)$ for $x \in \Omega$.
Using that $u \in C^{0}(\overline{\Omega})$, 
by $L^p$ estimates we have $u_k \to u^{\star}$ weakly in $W^{2,p}_{\rm loc}(\Omega)$ for all $p < \infty$.
Since $u^{\star} \leq u$ in $\Omega$, we can extend $u^{\star}$ up to the boundary to a function $u^{\star} \in C^{0}(\overline{\Omega}) \cap W^{2,p}_{\rm loc}(\Omega)$.
By weak convergence, it follows that $u^{\star}$ is a strong solution to $- L u^{\star} = f(u^{\star})$ in $\Omega$, $u^{\star} = 0$.
Moreover, 
%we claim that 
$u^{\star}$ is a stable solution.
To see this, taking a positive function $\varphi \in W^{2,n}_{\rm loc}(\Omega)$ as in the stability inequality~\eqref{stable:point} for $u$, using that $u^{\star} \leq u$ and by the convexity of $f$, we have
\[
J_{u^{\star}} \varphi = (L + f'(u^{\star}))\varphi \leq (L + f'(u)) \varphi =  \jacobi \varphi \leq 0 \quad \text{ in } \Omega.
\]

Finally, by the uniqueness of stable solutions for convex nonlinearities (see Appendix~\ref{app:uniqueness}), we conclude $u = u^{\star}$.

%By the uniqueness of stable solutions for convex nonlinearities 
%(see Appendix~\ref{app:uniqueness}), it follows that $u = u^{\star}$.

Applying Step 1 to the minimal solutions $u_k$, 
by the bounds
$\|\vv^k\|_{L^{\infty}(\Omega_k)} \leq \|\vv\|_{L^{\infty}(\Omega)}$
and~\eqref{bdd:norms}, 
and by the monotonicity of the sequence,
for $k \geq l +1$
we obtain
\[
\|\nabla u_k\|_{L^{2+\gamma}(\Omega_l)} 
\leq \|\nabla u_k\|_{L^{2 + \gamma}(\Omega_k)} 
\leq C \|u_k\|_{L^1(\Omega_k)} 
\leq C \|u\|_{L^1(\Omega)}.
\]
Now, since $\nabla u_k \to \nabla u$ uniformly on compacts,
letting $k\to \infty$ and $l \to \infty$ in this last estimate and by monotone convergence,
we deduce the higher integrability in $C^{1,1}$ domains.

Assuming moreover that $n \leq 9$, by Step 1 applied to $u_k$,
for $k \geq l +1$ we have
\[
\| u_k\|_{C^{\alpha}(\overline{\Omega_l})} 
\leq \| u_k\|_{C^{\alpha}(\overline{\Omega_k})} 
\leq C \|u_k\|_{L^1(\Omega_k)} 
\leq C \|u^{\star}\|_{L^1(\Omega)} = C \|u\|_{L^1(\Omega)}.
\]
Hence, letting $k\to \infty$ and $l \to \infty$,
we deduce the H\"{o}lder estimate in $C^{1,1}$ domains.
\medskip

\textbf{Case $f(0) = 0$.}
Without loss of generality, we may assume that $u > 0$ in $\Omega$. 
Since $0$ is a stable solution,
by Proposition~\ref{prop:uniqueness} we deduce that $f(u) = \mu_1[L, \Omega] u$,\footnote{Here we are following the notation in Appendix~\ref{app:uniqueness}, namely, $\mu_1[L, \Omega]$ denotes the principal eigenvalue of $L$ in $\Omega$ with the sign convention $L \varphi = -\mu \varphi$.} and hence $u$ is a principal eigenfunction of $L$.

Since $a_{ij} \in C(\overline{\Omega_k})$ and $\vv^{k}_i \in L^{\infty}(\Omega_k)$, by standard existence theory,
there is a principal eigenvalue $\mu_k := \mu_1[L_k, \Omega_k]$ and eigenfunction $\varphi_k \in W^{2,p}(\Omega_k)$ for all $p < \infty$,
satisfying $\varphi_k > 0$ in $\Omega_k$, $\|\varphi_k\|_{L^1(\Omega_k)} = \|u\|_{L^1(\Omega)}$, $-L_k \varphi_k = \mu_k \varphi_k$ in $\Omega_k$, and $\varphi_k = 0$ on $\partial \Omega_k$.
Moreover, recalling that $a_{ij} \in C^{0,1}(\overline{\Omega_k})$ and $\vv^k_i \in C^{\infty}(\overline{\Omega_k})$, we further have $\varphi_k \in W^{3,p}(\Omega_k)$ for $p < \infty$.
In particular, since $\varphi_k$ are stable, by Step 1 we have the bounds
\begin{equation}
\label{pug0}
\|\nabla \varphi_k\|_{L^{2+\gamma}(\Omega_k)} \leq C \|u\|_{L^1(\Omega)},
\end{equation}
and 
\begin{equation}
\label{pug2}
\|\varphi_k\|_{C^{\alpha}(\overline{\Omega_k})} \leq C \|u\|_{L^1(\Omega)} \quad \text{ if } n \leq 9.
\end{equation}

To deduce the final estimates it suffices to extract a subsequence converging to $u$.
For this, we essentially follow the proof of Theorem~2.1 in~\cite{BerestyckiNirenbergVaradhan}.
Namely, by Harnack inequality,
for $k \geq l+1$ we have
$\|\varphi_k\|_{L^{\infty}(\Omega_l)} \leq C_{l}\inf_{\Omega_{l}} \varphi_k \leq C_{l} \|\varphi_k\|_{L^1(\Omega_l)} \leq C_l \|u\|_{L^1(\Omega)}$,
hence, by interior estimates (up to a subsequence) 
$\varphi_k \to \varphi$ weakly in $W^{2, p}_{\rm loc}(\Omega)$
for some positive $\varphi \in W^{2,p}_{\rm loc}(\Omega)$.
Moreover, since $\varphi_k \in W^{1, 2+\gamma}_0(\Omega_k)$,
the extension $\varphi_k \chi_{\Omega_k}$ is bounded in $W_0^{1,2+\gamma}(\Omega)$
and by compactness $\varphi_k \chi_{\Omega_k} \to \varphi$ 
weakly in $W^{1, 2+\gamma}_0(\Omega)$, hence strongly in~$L^{2+\gamma}(\Omega)$.
In particular, by strong convergence $\|\varphi\|_{L^1(\Omega)} = \lim_{k} \|\varphi_k\|_{L^1(\Omega_k)} = \|u\|_{L^1(\Omega)}$
and by weak lower semicontinuity, from \eqref{pug0}, we deduce
\begin{equation}
\label{pug1}
\|\nabla \varphi \|_{L^{2+\gamma}(\Omega )} \leq C \|u\|_{L^1(\Omega)}.
\end{equation}
By \eqref{pug2}, using that $\varphi_k \to \varphi$ converges locally uniformly in $\Omega$, it is also clear that 
\begin{equation}
\label{pug3}
\|\varphi \|_{C^{\alpha}(\overline{\Omega})} \leq C \|u\|_{L^1(\Omega)} \quad \text{ if } n \leq 9.
\end{equation}

Passing to the limit in the equation, we see that
$\varphi \in W^{1,2+\gamma}_0(\Omega) \cap W^{2, p}_{\rm loc}(\Omega)$ solves $- L \varphi = \mu^{\star} \varphi$
where $\mu^{\star} = \lim_{k} \mu_1[L_k, \Omega_k]$.
In fact, $\mu^{\star} = \mu_1[L, \Omega]$ 
by the characterization of the principal eigenfunction and the maximum principle.

It follows that $u$ and $\varphi$ are both positive principal eigenfunctions of $L$ in $\Omega$
with the same norm
$\|u\|_{L^1(\Omega)} = \|\varphi\|_{L^1(\Omega)}$, hence $u = \varphi$.
Estimates~\eqref{pug1} and \eqref{pug3} already give the claim.
\end{proof}

\appendix

\medskip\medskip

\section{Approximating $C^{1,1}$ domains by smooth ones from the interior}
\label{app:approximation}

In this appendix, we show that bounded domains of class $C^{1,1}$ can be approximated from the interior by smooth sets satisfying uniform bounds.
This is a well-known result in the literature, and is valid more generally for domains of class $C^{k, \alpha}$ with $k \geq 1$ and $\alpha \in [0, 1]$.
We include a proof for the sake of completeness. 
%We have included an elementary proof for the sake of completeness.
Our proof follows the approach suggested by Gilbarg and Trudinger in~\cite[Problem~6.9]{GilbargTrudinger}.

First, we recall the definition of $C^{1,1}$ domains:

\begin{definition}
\label{def:domain}
A bounded domain $\Omega \subset \R^n$ is of class $C^{1,1}$ 
if at each point $x_0 \in \partial \Omega$ there is a ball $B = B_{\rho}(x_0)$
and a one-to-one mapping $\Psi$ of $B$ onto $U \subset \R^n$ such that
\begin{enumerate}[label=(\roman*)]
\item $\Psi (B \cap \Omega) \subset \R^n_{+}$;
\item $\Psi(B \cap \partial \Omega) \subset \partial \R^n_{+}$;
\item $\Psi \in C^{1,1}(\overline{B})$ and $\Psi^{-1} \in C^{1,1}(\overline{U})$.
\end{enumerate}
Equivalently, $\Omega$ is of class $C^{1,1}$ if each point of $\partial \Omega$ has a neighborhood in which $\partial \Omega$ is the graph of a $C^{1,1}$ function of $n-1$ of the coordinates.
\end{definition}

Every such domain can be written as the positive set of a $C^{1,1}$ function.
%Namely, we have:
\begin{lemma}
\label{lemma:domain}
Let $\Omega \subset \R^n$ be a bounded domain of class $C^{1,1}$.
Then there is a 
function $\Phi \in C^{1,1}(\R^n)$
such that
$\Omega = \{\Phi > 0\}$,
$\Phi = 0$ on $\partial \Omega$, 
and $\nabla \Phi(x) \neq 0$ for all $x \in \partial \Omega$.
\end{lemma}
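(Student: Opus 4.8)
The plan is to build $\Phi$ by patching together local defining functions of $\partial\Omega$ with a partition of unity, following the hint to~\cite{GilbargTrudinger}*{Problem~6.9}. First I would set up the local pieces. By the graph characterization in Definition~\ref{def:domain}, each $x_0\in\partial\Omega$ has an open neighborhood $U_{x_0}$ in which, after a rotation of coordinates, $\Omega\cap U_{x_0}=\{x_n>\gamma_{x_0}(x')\}\cap U_{x_0}$ for a function $\gamma_{x_0}\in C^{1,1}$; shrinking $U_{x_0}$, I may assume $\gamma_{x_0}$ is a $C^{1,1}$ function on all of the projection of $U_{x_0}$ to $\R^{n-1}$. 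On $U_{x_0}$ set $\Phi_{x_0}(x):=x_n-\gamma_{x_0}(x')$. Then $\Phi_{x_0}\in C^{1,1}(U_{x_0})$, it is positive on $U_{x_0}\cap\Omega$, zero on $U_{x_0}\cap\partial\Omega$, negative on $U_{x_0}\setminus\overline\Omega$, and $\nabla\Phi_{x_0}=(-\nabla'\gamma_{x_0},1)$ never vanishes. Since $\partial\Omega$ is compact I keep finitely many, $U_1,\dots,U_m$, with functions $\Phi_1,\dots,\Phi_m$. I then adjoin an interior patch $U_0$, bounded with $\overline{U_0}\subset\Omega$, chosen so that $\{U_0,\dots,U_m\}$ covers $\overline\Omega$ (possible since $\overline\Omega\setminus\bigcup_{i=1}^m U_i$ is a compact subset of $\Omega$), and the exterior patch $U_{m+1}:=\R^n\setminus\overline\Omega$; on these I put the constants $\Phi_0:=1$ and $\Phi_{m+1}:=-1$. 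Now $\{U_i\}_{i=0}^{m+1}$ is an open cover of $\R^n$.

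Next I would fix a smooth partition of unity $\{\zeta_i\}_{i=0}^{m+1}$ subordinate to $\{U_i\}_{i=0}^{m+1}$, arranged (by a standard construction) so that $\sum_{i=0}^{m}\zeta_i\equiv 1$ on a neighborhood of $\overline\Omega$. This guarantees the two support conditions that matter below: $\supp\zeta_0\subset\overline{U_0}\subset\Omega$ and $\supp\zeta_{m+1}\subset\R^n\setminus\overline\Omega$. I then define
\[
\Phi:=\sum_{i=0}^{m+1}\zeta_i\,\Phi_i,
\]
a finite sum of products of smooth functions with $C^{1,1}$ functions, so $\Phi\in C^{1,1}(\R^n)$.

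To identify $\{\Phi>0\}$, fix $x$ and consider the indices $i$ with $\zeta_i(x)>0$. If $x\in\Omega$ these satisfy $i\in\{0,\dots,m\}$ (since $\zeta_{m+1}$ vanishes on $\Omega$) and each corresponding $\Phi_i(x)>0$; if $x\in\R^n\setminus\overline\Omega$ they satisfy $i\in\{1,\dots,m+1\}$ and each $\Phi_i(x)<0$; and if $x\in\partial\Omega$ they satisfy $i\in\{1,\dots,m\}$ (since $\supp\zeta_0$ and $\supp\zeta_{m+1}$ miss $\partial\Omega$) and each $\Phi_i(x)=0$. As $\zeta_i(x)\ge 0$ and $\sum_i\zeta_i(x)=1$, it follows that $\Phi>0$ on $\Omega$, $\Phi<0$ on $\R^n\setminus\overline\Omega$, and $\Phi\equiv 0$ on $\partial\Omega$; in particular $\Omega=\{\Phi>0\}$ and $\Phi=0$ on $\partial\Omega$.

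Finally I would check $\nabla\Phi\neq 0$ on $\partial\Omega$. For $x_0\in\partial\Omega$, $\nabla\Phi(x_0)=\sum_i\Phi_i(x_0)\nabla\zeta_i(x_0)+\sum_i\zeta_i(x_0)\nabla\Phi_i(x_0)$. Any index $i$ with $\zeta_i(x_0)\neq 0$ or $\nabla\zeta_i(x_0)\neq 0$ has $x_0\in\supp\zeta_i$, hence $i\in\{1,\dots,m\}$ by the support conditions, and then $\Phi_i(x_0)=0$; so the first sum vanishes and $\nabla\Phi(x_0)=\sum_i\zeta_i(x_0)\nabla\Phi_i(x_0)$, summed over the nonempty set of $i\in\{1,\dots,m\}$ with $\zeta_i(x_0)>0$. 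For each such $i$, $\Phi_i$ vanishes on the relative neighborhood $\partial\Omega\cap U_i$ of $x_0$ in $\partial\Omega$, so $\nabla\Phi_i(x_0)$ is normal to $\partial\Omega$ at $x_0$; since $\Phi_i$ is positive just inside $\Omega$ and $\nabla\Phi_i(x_0)\neq 0$, we get $\nabla\Phi_i(x_0)=\lambda_i\nu(x_0)$ with $\lambda_i>0$, $\nu(x_0)$ the inner unit normal. Hence $\nabla\Phi(x_0)=\big(\sum_i\zeta_i(x_0)\lambda_i\big)\nu(x_0)\neq 0$. The one point requiring care is exactly this bookkeeping of supports: the partition of unity must be chosen so that the constant patches $U_0$ and $U_{m+1}$ stay away from $\partial\Omega$, for otherwise the terms $\Phi_i(x_0)\nabla\zeta_i(x_0)$ with $\Phi_i(x_0)=\pm 1$ would survive and break the final step; the rest is routine.
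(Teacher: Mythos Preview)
Your proof is correct and follows essentially the same partition-of-unity construction as the paper: both glue local defining functions (your graph functions $x_n-\gamma_i(x')$ play the same role as the paper's $n$-th components $\Psi_j^n$ of the flattening maps) together with a constant interior patch. The only cosmetic difference is that you add an exterior patch $U_{m+1}$ with $\Phi_{m+1}=-1$, which makes your $\Phi$ equal to $-1$ at infinity rather than compactly supported as in the paper's version; since the lemma does not require compact support, this is immaterial.
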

\begin{proof}
By compactness, 
$\partial \Omega$ may be covered by finitely many balls $\{B_j = B_{\rho_j}(x_{j})\}_{j = 1}^N$, with $x_j \in \partial \Omega$ and $\rho_j > 0$,
such that there are flattening maps $\Psi_{j} \in C^{1,1}(\overline{B_j})$ 
as in Definition~\ref{def:domain}.

Let $\rho > 0$ be sufficiently small so that the set
$B_0 := \{ x \in \Omega \colon{\rm dist } (x, \partial \Omega) > \rho  \}$
satisfies $\overline{\Omega} \subset \cup_{j = 0}^{N} B_j$,
and consider a partition of unity $\{\eta_j\}_{j = 0}^{N}$ subordinated to the covering $\{B_j\}_{j = 0}^{N}$.
The function
\[
\Phi = \eta_0 + \sum_{j = 1}^{N} \eta_j \Psi_j^{n}
\]
now satisfies the desired properties.
\end{proof}
\begin{remark}
Notice that, by construction, the function $\Phi$ above
is compactly supported and
takes negative values in a bounded neighborhood of $\partial \Omega$ outside $\overline{\Omega}$.
\end{remark}

Regularizing $\Phi$ and taking appropriate superlevel sets, 
we obtain the approximation.

\begin{lemma}
Let $\Omega = \{\Phi > 0\}\subset \R^n$ be a bounded domain of class $C^{1,1}$, with $\Phi \in C^{1,1}(\R^n)$ as in Lemma~\ref{lemma:domain} above.
Then, there is an exhaustion
of $\Omega$ by smooth sets $\Omega_k = \{\Phi_k > 0\}$,\footnote{By an exhaustion we mean that $\overline{\Omega_k} \subset \Omega_{k+1} \subset \Omega$ and $\Omega = \cup_{k} \Omega_k$.}
where the functions $\Phi_k \in C^{\infty}(\R^n)$ satisfy
\[
\|\nabla \Phi_k\|_{C^{1}(\R^n)} 
+ \||\nabla \Phi_k|^{-1}\|_{L^{\infty}(\partial \Omega_k)} \leq C,
\]
for some constant $C$ depending only on $\Phi$ and $\Omega$. 
Moreover, we have that 
$\partial \Omega_k \to \partial \Omega$ in the sense of the Hausdorff distance.\footnote{
By this we mean that $\max \{\sup_{x \in \partial \Omega} {\rm dist} (x, \partial \Omega_k), \sup_{x \in \partial \Omega_k} {\rm dist} (x, \partial \Omega)\} \to 0$ as $k \to \infty$.}
\end{lemma}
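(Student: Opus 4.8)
The plan is to follow the approach of Gilbarg and Trudinger: mollify $\Phi$ and take superlevel sets at small positive heights. Fix a standard mollifier and set $\Phi^{\varepsilon} := \Phi * \rho_{\varepsilon}$; since $\Phi \in C^{1,1}(\R^n) = W^{2,\infty}(\R^n)$ is compactly supported, $\Phi^{\varepsilon} \in C^{\infty}(\R^n)$ is supported in a fixed ball for $\varepsilon \le 1$, $\Phi^{\varepsilon} \to \Phi$ and $\nabla \Phi^{\varepsilon} \to \nabla \Phi$ uniformly on $\R^n$ as $\varepsilon \downarrow 0$, and, as mollification does not increase $L^{\infty}$ norms,
\[
\|\nabla \Phi^{\varepsilon}\|_{C^{1}(\R^n)} \le \|\nabla \Phi^{\varepsilon}\|_{L^{\infty}} + \|D^{2}\Phi^{\varepsilon}\|_{L^{\infty}} \le \|\nabla \Phi\|_{L^{\infty}} + \|D^{2}\Phi\|_{L^{\infty}} \le 2\|\nabla \Phi\|_{C^{0,1}(\R^n)}.
\]
Since $\nabla \Phi$ is continuous and nonvanishing on the compact set $\partial \Omega$, there are $\delta_{0},c_{0}>0$ with $|\nabla \Phi| \ge c_{0}$ on $N_{\delta_{0}} := \{\mathrm{dist}(\cdot,\partial \Omega)<\delta_{0}\}$, and, shrinking $\delta_{0}$, we may also assume $N_{\delta_{0}}\cap\{\Phi=0\}=\partial \Omega$: indeed, by Lemma~\ref{lemma:domain} and the remark following it, $\Phi<0$ on a neighborhood of $\partial \Omega$ outside $\overline{\Omega}$, so the rest of $\{\Phi=0\}$ lies at positive distance from the compact set $\partial \Omega$. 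In particular $|\nabla \Phi^{\varepsilon}| \ge c_{0}/2$ on $N_{\delta_{0}}$ for all small $\varepsilon$.

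Next I would fix $t_{k} := 2^{-k}$ and choose $\varepsilon_{k}\downarrow 0$ small enough that $\nu_{k} := \|\Phi^{\varepsilon_{k}}-\Phi\|_{L^{\infty}(\R^n)}$ satisfies $\nu_{k}<\tfrac14(t_{k-1}-t_{k})$ for all $k$; in particular $\nu_{k}<t_{k}$ and $\nu_{k}+\nu_{k+1}<t_{k}-t_{k+1}$. Put $\Phi_{k} := \Phi^{\varepsilon_{k}}-t_{k}\in C^{\infty}(\R^n)$ and $\Omega_{k} := \{\Phi_{k}>0\}$. For $k$ large one has $\{\Phi^{\varepsilon_{k}}=t_{k}\}\subset N_{\delta_{0}}$: if $\mathrm{dist}(x,\partial \Omega)\ge\delta_{0}$ then either $x\in\overline{\Omega}$, where $\Phi(x)\ge c_{1}:=\min_{\overline{\Omega}\setminus N_{\delta_{0}}}\Phi>0$ and hence $\Phi^{\varepsilon_{k}}(x)\ge c_{1}-\nu_{k}>t_{k}$, or $x\notin\overline{\Omega}$, where $\Phi(x)\le0$ and hence $\Phi^{\varepsilon_{k}}(x)\le\nu_{k}<t_{k}$. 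Therefore $\nabla \Phi_{k}=\nabla \Phi^{\varepsilon_{k}}$ is nonzero on $\{\Phi_{k}=0\}$, so by the implicit function theorem $\Omega_{k}$ is a bounded $C^{\infty}$ domain with $\partial \Omega_{k}=\{\Phi^{\varepsilon_{k}}=t_{k}\}\subset N_{\delta_{0}}$ and $\||\nabla \Phi_{k}|^{-1}\|_{L^{\infty}(\partial \Omega_{k})}\le 2/c_{0}$; combined with the displayed bound, this yields the asserted uniform estimate on $\Phi_{k}$ with $C$ depending only on $\Phi$ and $\Omega$.

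It remains to verify the exhaustion and the Hausdorff convergence. If $\Phi_{k}(x)\ge0$ then $\Phi(x)\ge t_{k}-\nu_{k}>0$, so $\overline{\Omega_{k}}\subset \Omega$, and $\Phi^{\varepsilon_{k+1}}(x)\ge\Phi^{\varepsilon_{k}}(x)-\nu_{k}-\nu_{k+1}\ge t_{k}-\nu_{k}-\nu_{k+1}>t_{k+1}$, so $\overline{\Omega_{k}}\subset \Omega_{k+1}$; and for $x\in\Omega$ with $\Phi(x)=c>0$ one has $\Phi^{\varepsilon_{k}}(x)\ge c-\nu_{k}>t_{k}$ once $t_{k},\nu_{k}<c/2$, so $\bigcup_{k}\Omega_{k}=\Omega$. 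For the Hausdorff convergence, the bound $|\nabla \Phi|\ge c_{0}$ on $N_{\delta_{0}}$ and the identity $N_{\delta_{0}}\cap\{\Phi=0\}=\partial \Omega$ give, by integrating $\Phi$ along the flow of $\pm\nabla \Phi/|\nabla \Phi|$ from a point $x$ with $|\Phi(x)|$ small until it hits $\partial \Omega$, that $\mathrm{dist}(x,\partial \Omega)\le|\Phi(x)|/c_{0}$; applied to $x\in\partial \Omega_{k}$, where $|\Phi(x)|\le t_{k}+\nu_{k}$, this shows $\sup_{\partial \Omega_{k}}\mathrm{dist}(\cdot,\partial \Omega)\le(t_{k}+\nu_{k})/c_{0}\to0$. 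Conversely, for $y\in\partial \Omega$, following the flow of $\nabla \Phi^{\varepsilon_{k}}/|\nabla \Phi^{\varepsilon_{k}}|$ from $y$ (along which $\Phi^{\varepsilon_{k}}$ grows at rate $\ge c_{0}/2$ while it stays in $N_{\delta_{0}}$, starting from $\Phi^{\varepsilon_{k}}(y)\ge-\nu_{k}$) reaches the level $t_{k}$, hence a point of $\partial \Omega_{k}$, within distance $\le2(t_{k}+\nu_{k})/c_{0}$ of $y$, so $\sup_{\partial \Omega}\mathrm{dist}(\cdot,\partial \Omega_{k})\to0$. If one additionally wishes each $\Omega_{k}$ to be connected, it suffices to discard, for $k$ large, the components of $\Omega_{k}$ not meeting a fixed ball compactly contained in $\Omega$, which preserves all of the above.

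The step I expect to be the main obstacle is the pair of gradient-flow estimates in the last part: one must guarantee that the integral curves do not leave the neighborhood $N_{\delta_{0}}$ on which the gradient is bounded below — which forces the heights $t_{k}$ to be chosen small relative to $c_{0}\delta_{0}$ — and one must use that $\{\Phi=0\}$ has no spurious component inside $N_{\delta_{0}}$, so that the flow really terminates on $\partial \Omega$ rather than on the far-away zero set of $\Phi$. The remaining points are routine ``choose $\varepsilon_{k}$ small enough'' bookkeeping.
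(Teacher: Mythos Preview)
Your proof is correct and follows the same template as the paper's: mollify $\Phi$ and take slightly positive superlevel sets. The paper ties the shift to the mollification scale by setting $\Phi_k=\Phi*\eta^{\varepsilon_k}-2L\varepsilon_k$, whereas you decouple the two via independent sequences $t_k,\varepsilon_k$; both choices work and the uniform $C^1$ bound on $\nabla\Phi_k$ and the gradient lower bound on $\partial\Omega_k$ are obtained in the same way.

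The one substantive difference is the Hausdorff convergence. The paper exploits the comparability $L^{-1}d\le\Phi\le Ld$ on $\overline{\Omega}$ to prove the set inclusions
\[
\{d>\varepsilon(2L^2+1)\}\subset\{\Phi*\eta^{\varepsilon}>2L\varepsilon\}\subset\{d>\varepsilon\},
\]
which give both the exhaustion and the Hausdorff convergence in one stroke, with no ODE argument. Your gradient-flow argument is also valid, and you correctly identify its only delicate point: the integral curves must remain in the tube where $|\nabla\Phi|$ (resp.\ $|\nabla\Phi^{\varepsilon_k}|$) is bounded below. This is ensured once you note that the same ``$\Phi^{\varepsilon_k}=t_k$ forces $x\in N_{\delta_0}$'' argument in fact places $\partial\Omega_k$ in $N_{\delta_1}$ for any fixed $\delta_1<\delta_0$ when $k$ is large, so the unit-speed flow from $x\in N_{\delta_1}$ has at least time $\delta_0-\delta_1$ before exiting $N_{\delta_0}$, which exceeds $(t_k+\nu_k)/c_0$ for large $k$. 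The paper's route avoids this bookkeeping entirely; yours trades that simplicity for not needing the global comparability of $\Phi$ with the distance function.
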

\begin{proof}

Since $\Omega = \{\Phi > 0\}$
and $\nabla \Phi \neq 0$ on $\partial \Omega$,
by continuity of the gradient, $\Phi$ is comparable to 
the distance function
$d(x) := {\rm dist }(x, \partial \Omega)$ in $\overline{\Omega}$, i.e.,
\begin{equation}
\label{ineq:comp}
L^{-1} d(x) \leq \Phi(x) \leq L d(x) \quad \text{ for } x \in \overline{\Omega},
\end{equation}
for some 
$L \geq 1$.
For $x$ outside $\Omega$, we define
$d(x) := - {\rm dist}(x, \partial \Omega) < 0$.

Consider a mollifying sequence $(\eta^{\varepsilon})_{\varepsilon > 0}$ with $\supp \eta^{\varepsilon} \subset B_{\varepsilon}$.
We will take the functions
\begin{equation}
\label{def:phiseq}
\Phi_k := \Phi * \eta^{\varepsilon_k} - 2L \varepsilon_k
\end{equation}
for an appropriate sequence $\varepsilon_k \downarrow 0$.
Since $\|\nabla \Phi_k\|_{C^1} = \|\nabla \Phi_k\|_{L^{\infty}} + \|D^2 \Phi_k\|_{L^{\infty}}$,
recalling that $\|D^2\Phi_k\|_{L^{\infty}(\R^n)} = [\nabla \Phi_k]_{C^{0,1}(\R^n)}$ (because $\Phi_k$ is $C^{\infty}$) and $\Phi \in C^{1,1}$,
the uniform bounds for $\|\nabla \Phi_k\|_{C^1}$ stated in the lemma hold by the standard properties of convolutions.

Given $\delta > 0$, from \eqref{ineq:comp} we see that
\[
\Phi(x) > \delta/ L \quad 
\text{ for } x \in 
 \{x \in \R^n \colon d(x) > \delta \} = \{d > \delta\},
\]
and taking the convolution with $\eta^{\varepsilon}$, it follows that
\begin{equation}
\label{ineq:inside}
\Phi*\eta^{\varepsilon}(x) > \delta/ L \quad 
\text{ for } x \in  \{d > \delta + \varepsilon\}.
\end{equation}
Similarly, for $\widetilde{\delta} > 0$ we have that
\[
\Phi(x) \leq L \widetilde{\delta} \quad 
\text{ for } x \in \{d \leq \widetilde{\delta} \},
\]
and regularizing we obtain
\begin{equation}
\label{ineq:outside}
\Phi*\eta^{\varepsilon}(x)
\leq L \widetilde{\delta} \quad 
\text{ for } 
x \in  \{ d \leq \widetilde{\delta} - \varepsilon \}.
\end{equation}
Letting $\widetilde{\delta} = \delta/L^2$, 
since $L \widetilde{\delta} = \delta/L$,
by \eqref{ineq:inside} and \eqref{ineq:outside} we have that
\[
\{d > \delta + \varepsilon\} \subset \{\Phi*\eta^{\varepsilon} > \delta/L\} \subset \{d > \delta/L^2 - \varepsilon\},
\]
and the choice $\delta = 2 L^2 \varepsilon$ now yields the inclusions
\begin{equation}
\label{inclusions}
\{d > \varepsilon(2L^2 + 1)\} \subset \{\Phi*\eta^{\varepsilon} > 2 L \varepsilon\} \subset \{d > \varepsilon\}.
\end{equation}

Next, we construct the sequence $\varepsilon_k$ in \eqref{def:phiseq} inductively.
Fix $\varepsilon_1 > 0$ small.
For $k \geq 1$, we define $\varepsilon_{k+1} := 
\varepsilon_{k}/ \{2 (2 L^2 + 1)\} = \varepsilon_1/\{2 (2 L^2 + 1)\}^k$.
Hence, by \eqref{inclusions}, the sets
$\Omega_k := \{\Phi_k > 0\} = \{\Phi *\eta^{\varepsilon_k} > 2 L \varepsilon_k\}$
satisfy 
\begin{equation}
\label{distbdy}
\overline{\Omega_{k}} \subset \{d \geq \varepsilon_k\} \subset \{d > \varepsilon_{k+1} (2L^2 + 1)\} \subset \Omega_{k+1} \subset \Omega.
\end{equation}
They clearly exhaust $\Omega$, since $\varepsilon_k \downarrow 0$ and thus
$\Omega = \cup_{k}\{d \geq \varepsilon_{k-1}\} \subset \cup_{k} \Omega_k$.
Furthermore, the inclusions~\eqref{distbdy} show that $\partial \Omega_{k}$ is at a Hausdorff distance of at most $\varepsilon_k(2L^2+1)$ from $\partial \Omega$,
and hence $\partial \Omega_k \to \partial \Omega$ with respect to this distance.

It remains to prove the lower bound for $|\nabla \Phi_k|$ on $\partial \Omega_k$, which will also show the boundary $\partial \Omega_k$ to be smooth.
%It remains to prove the lower bound for $|\nabla \Phi_k|$ on $\partial \Omega_k$, which will also show that the boundary $\partial \Omega_k$ is smooth.
Let $x \in \R^n$ with $|d(x)| = {\rm dist }(x, \partial \Omega) = |x-x_0|$ for some $x_0 \in \partial \Omega$.
Since
\[
\textstyle
\nabla \Phi(x) \cdot \frac{\nabla \Phi(x_0)}{|\nabla \Phi(x_0)|} 
\geq |\nabla \Phi(x_0)| - [\nabla \Phi]_{C^{0,1}(\R^n)} |d(x)|, 
\]
we have the lower bound
\[
\textstyle
\nabla \Phi(x) \cdot \frac{\nabla \Phi(x_0)}{|\nabla \Phi(x_0)|}  \geq 
\frac{1}{2}\||\nabla \Phi|^{-1}\|_{L^{\infty}(\partial \Omega)}^{-1}
\quad \text{ for } x \in \{- \rho < d < \rho\},
\]
where $\rho > 0$ depends only on $\||\nabla \Phi|^{-1}\|_{L^{\infty}(\partial \Omega)}$ and $[\nabla \Phi]_{C^{0,1}(\R^n)}$.
Taking the convolution with $\eta^{\varepsilon_k}$ in this last inequality, we obtain
\begin{equation}
\label{tis}
\textstyle
\nabla \Phi_k(x) \cdot \frac{\nabla \Phi(x_0)}{|\nabla \Phi(x_0)|}  \geq 
\frac{1}{2}\||\nabla \Phi|^{-1}\|_{L^{\infty}(\partial \Omega)}^{-1}
\quad \text{ for } x \in \{- (\rho - \varepsilon_k) < d < \rho - \varepsilon_k \}.
\end{equation}
By \eqref{distbdy}, the boundary 
$\partial \Omega_{k}$
is at a distance of at most $\varepsilon_{k} (2L^2 + 1)$ from $\partial \Omega$.
Hence, choosing $\varepsilon_1 > 0$ sufficiently small such that $\rho - \varepsilon_1 > \varepsilon_1(2L^2 + 1)$,
from \eqref{tis} and the definition of $\varepsilon_k = \varepsilon_1/\{2(2L^2+1)\}^{k-1}$
we deduce
\[
\textstyle
|\nabla \Phi_k(x)| \geq \nabla \Phi_k(x) \cdot \frac{\nabla \Phi(x_0)}{|\nabla \Phi(x_0)|} \geq 
\frac{1}{2}\||\nabla \Phi|^{-1}\|_{L^{\infty}(\partial \Omega)}^{-1}
\quad \text{ on } \partial \Omega_k,
\]
and therefore $\||\nabla \Phi_k|^{-1}\|_{L^{\infty}(\partial \Omega_k)} \leq 2 \||\nabla \Phi|^{-1}\|_{L^{\infty}(\partial \Omega)}$, which concludes the proof.
\end{proof}

We conclude this appendix with a description of the flattening procedure
and a technical lemma used in the proof of Theorem~\ref{thm:c11} in Section~\ref{section:approximation}.
\medskip

Let $x_0 \in \partial \Omega$.
Rotating the coordinate axes, we may assume that $\nabla \Phi(x_0) = \partial_n \Phi(x_0) e_n$, with $|\nabla \Phi(x_0)| = \partial_n \Phi(x_0) > 0$.
Writing $x = (x', x_n) \in \R^{n-1} \times \R$, the map $\Psi \colon \R^n\to\R^n$ given by
$\Psi(x) := \big((x-x_0)', \frac{\Phi(x)}{\partial_n \Phi(x_0)} \big)$
is a local diffeomorphism around $x_0$ which flattens out the boundary $\partial \Omega$.
More precisely, by a quantitative version of the Inverse Function Theorem, we have the following:
\begin{lemma}
\label{lemma:prelim}
There are (small) numbers 
$0 < R_1 < R_2$ and $\rho > 0$
depending only on 
$\|\nabla \Phi\|_{C^{0,1}(\R^n)}$
and $\||\nabla \Phi|^{-1}\|_{L^{\infty}(\partial \Omega)}$ such that
\begin{equation}
\label{eq:claim}
\Psi(B_{R_1}(x_0) \cap \Omega) \subset B_{\rho/2}^{+} \subset B_{\rho}^{+} \subset \Psi(B_{R_2}(x_0) \cap \Omega).
\end{equation}
%\color{red}
%\[
%\Psi(B_{R_2}(x_0) \cap \Omega) \cap B_\rho = B_\rho^+
%\]
\end{lemma}

\begin{proof}
By translation, we may assume that $x_0 = 0 \in \partial \Omega$.
Since the map $\Psi$ 
satisfies $\Psi(0) = 0$ and $D \Psi(0) = \id$,
choosing $R_2 > 0$ small such that
\begin{equation}
\label{cond1}
|D \Psi(x) - D \Psi(z)| \leq 1/2 \quad \text{ for all } x, z \in B_{R_2},
\end{equation}
by Lemma~1.3 in~\cite[Chapter XIV]{LangAnalysis} we deduce that for all $y \in B_{R_2/2}$
there is a unique $x \in B_{R_2}$ such that $\Psi(x) = y$.
Thus, we obtain the second inclusion
\begin{equation}\label{eq:2nd}
B^{+}_{R_2/2} \subset \Psi(B_{R_2} \cap \Omega).
\end{equation}
Using that $[D \Psi]_{C^{0,1}(\R^n)} \leq [\nabla \Phi]_{C^{0,1}(\R^n)} \||\nabla \Phi|^{-1}\|_{L^{\infty}(\partial \Omega)}$,
it is easy to check that condition~\eqref{cond1} is fulfilled if
\begin{equation}
\label{thisone}
R_2 \leq (4 [\nabla \Phi]_{C^{0,1}(\R^n)} \||\nabla \Phi|^{-1}\|_{L^{\infty}(\partial \Omega)})^{-1}.
\end{equation}

To show the first inclusion in \eqref{eq:claim}, 
we proceed as above but considering the inverse map $\Psi^{-1}$ instead of $\Psi$.
If $R_1 > 0$ is such that
\begin{equation}
\label{cond15}
|D \Psi^{-1}(\widetilde{x}) - D \Psi^{-1}(\widetilde{z})| \leq 1/2 \quad \text{ for all } \widetilde{x}, \widetilde{z} \in B_{2R_1},
\end{equation}
then, again
by Lemma~1.3 in~\cite[Chapter XIV]{LangAnalysis} we deduce
\begin{equation}
\label{claim2}
\Psi(B_{R_1} \cap \Omega) \subset B_{2 R_1}^{+}.
\end{equation}
For \eqref{cond15} to hold, it suffices to take $R_1 > 0$ sufficiently small such that
\begin{equation}
\label{cond3}
[D \Psi^{-1}]_{C^{0,1}(B_{2 R_1})} 4 R_1 \leq 1/2.
\end{equation}
Hence, to quantify the smallness of $R_1$, we have to estimate $[D \Psi^{-1}]_{C^{0,1}(B_{2 R_1})}$.

Let $\widetilde{x} = \Psi(x)$ and  $\widetilde{y}= \Psi(y) $ in $B_{2 R_1}$.
If $R_1 \leq R_2$, with $R_2$ satisfying \eqref{thisone},
then
$\frac{1}{\partial_n \Phi(x)} 
\leq 2\||\nabla \Phi|^{-1}\|_{L^{\infty}(\partial \Omega)} $
and hence
\[
\begin{split}
&|D \Psi^{-1}(\widetilde{x})  - D \Psi^{-1}(\widetilde{y}) |^2 \\
&\leq 
2\frac{|\nabla' \Phi(x) - \nabla' \Phi(y)|^2}{|\partial_n \Phi(x)|^2}
+\frac{\left(2|\nabla' \Phi(y)|^2 + |\partial_n \Phi(0)|^2 \right)|\partial_n \Phi(x) - \partial_n \Phi(y)|^2}{|\partial_n \Phi(x)|^2 |\partial_n \Phi(y)|^2}\\
&\leq 8 ( 1
+ 6 \|\nabla \Phi\|^2_{L^{\infty}(\R^n)} \||\nabla \Phi|^{-1}\|^2_{L^{\infty}(\partial \Omega)} )
 \||\nabla \Phi|^{-1}\|^2_{L^{\infty}(\partial \Omega)}
 [\nabla \Phi]^2_{C^{0,1}(\R^n)}
|x - y|^2.
\end{split}
\]
Moreover,
%under the same assumption on $R_1$, 
since
\[
|x - y| \leq [D \Psi^{-1}]_{L^{\infty}(B_{2R_1})} |\widetilde{x} - \widetilde{y}| \leq (1 + 8 \|\nabla \Phi\|_{L^{\infty}}^2 \||\nabla \Phi|^{-1}\|_{L^{\infty}(\partial \Omega)}^2)^{1/2} |\widetilde{x} - \widetilde{y}|,
\]
by the above we conclude
\[
[D \Psi^{-1}]_{C^{0,1}(B_{2 R_1})} \leq 2 \sqrt{2} (1 + 8 \|\nabla \Phi\|_{L^{\infty}}^2 \||\nabla \Phi|^{-1}\|_{L^{\infty}(\partial \Omega)}^2)
 \||\nabla \Phi|^{-1}\|_{L^{\infty}(\partial \Omega)} 
 [\nabla \Phi]_{C^{0,1}(\R^n)}
\]
for all $R_1 \leq R_2$ satisfying \eqref{thisone}.
Therefore, if
%$R_1 \leq R_2$ is such that 
\begin{equation}
\label{thatone}
R_1 \leq \left( 16 \sqrt{2} 
\big( 1
+ 8 \|\nabla \Phi\|^2_{L^{\infty}(\R^n)} \||\nabla \Phi|^{-1}\|^2_{L^{\infty}(\partial \Omega)} \big) 
 \||\nabla \Phi|^{-1}\|_{L^{\infty}(\partial \Omega)}
 [\nabla \Phi]_{C^{0,1}(\R^n)}
\right)^{-1},
\end{equation}
then \eqref{cond3} holds and \eqref{claim2} follows. 

Finally, choosing $R_1$, $R_2 > 0$ satisfying~\eqref{thisone}, \eqref{thatone}, and $8 R_1 \leq R_2$,
we may take $\rho = 4 R_1$
and the inclusions \eqref{eq:2nd} and \eqref{claim2} then yield the claim~\eqref{eq:claim}.
\end{proof}

\section{On the uniqueness of stable solutions}
\label{app:uniqueness}

Here, we prove the uniqueness of stable solutions to
nonvariational equations involving convex nonlinearities.
For this, we employ some fundamental results of Berestycki, Nirenberg, and Varadhan~\cite{BerestyckiNirenbergVaradhan}
on the principal eigenfunction.
Compare the following statement with Proposition 1.3.1 in Dupaigne's book~\cite{Dupaigne}:

\begin{proposition}
\label{prop:uniqueness}
Given $\Omega\subset \R^n$ a bounded domain, let $u_1$, $u_2 \in C^{0}(\overline{\Omega}) \cap W^{2,n}_{\rm loc}(\Omega)$ 
be two stable solutions of the equation $- L u = f(u)$ in $\Omega$, with $u = 0$ on $\partial \Omega$.

Assume that $f \in C^1(\R)$ is convex.

Then either $u_1 = u_2$ or $f(u) = \mu_1[L, \Omega] u$ on the ranges of $u_1$ and $u_2$.
\end{proposition}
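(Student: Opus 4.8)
The plan is to pit the two stable solutions against each other through the maximum principle, in the spirit of Martel~\cite{Martel} and Berestycki--Nirenberg--Varadhan~\cite{BerestyckiNirenbergVaradhan}. Throughout write $J_{u_i} = L + f'(u_i)$ and $\mu_i := \mu_1[J_{u_i}, \Omega]$; by~\cite{BerestyckiNirenbergVaradhan} the stability of $u_i$ is exactly $\mu_i \ge 0$, and each $J_{u_i}$ has a principal eigenfunction $\varphi_i \in W^{2,p}_{\rm loc}(\Omega)$ with $\varphi_i > 0$ in $\Omega$, $\varphi_i = 0$ on $\partial\Omega$, and $J_{u_i}\varphi_i = -\mu_i\varphi_i$. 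Set $w := u_2 - u_1$, which vanishes on $\partial\Omega$. The sole use of convexity is the pointwise inequality: since $-Lw = f(u_2) - f(u_1)$ and the graph of $f$ lies above each of its tangents,
\[
J_{u_1} w = -\big(f(u_2) - f(u_1) - f'(u_1) w\big) =: -R \le 0, \qquad J_{u_2}(-w) = -\big(f(u_1) - f(u_2) - f'(u_2)(u_1 - u_2)\big) \le 0,
\]
with $R \ge 0$; that is, $w$ is a supersolution of $J_{u_1}$ and $-w$ a supersolution of $J_{u_2}$, both with zero boundary values.

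First I would dispose of the nondegenerate case $\mu_1 > 0$ and $\mu_2 > 0$. Here the maximum principle of~\cite{BerestyckiNirenbergVaradhan} applies to each $J_{u_i}$: a supersolution with nonnegative boundary values is nonnegative. Applied to $w$ it gives $w \ge 0$, and applied to $-w$ it gives $-w \ge 0$; hence $u_1 = u_2$.

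It remains to treat the degenerate case, which after relabelling means $\mu_1 = 0$; this is where the alternative in the statement appears and where the argument is most delicate. I would run a sliding argument with $\varphi_1$, which now satisfies $J_{u_1}\varphi_1 = 0$ and, by Hopf's lemma, $\varphi_1 \ge c\,{\rm dist}(\cdot, \partial\Omega)$ near $\partial\Omega$. Since $w \in C^0(\overline{\Omega})$ vanishes on $\partial\Omega$ while $\varphi_1$ is comparable to ${\rm dist}(\cdot, \partial\Omega)$ there, the number $t^{\star} := \sup\{t \in \R : w \ge t\varphi_1 \text{ in } \Omega\}$ is finite, and $z := w - t^{\star}\varphi_1 \ge 0$ in $\Omega$ attains the value $0$; Hopf's lemma rules out contact only on $\partial\Omega$, so $z$ vanishes at an interior point. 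As $J_{u_1} z = J_{u_1} w = -R \le 0$, the function $z$ is a nonnegative supersolution of $J_{u_1}$ touching zero inside, so by the strong maximum principle $z \equiv 0$ in $\Omega$. Feeding this back, $w = t^{\star}\varphi_1$ and $R \equiv 0$, i.e. $f(u_2) - f(u_1) = f'(u_1)(u_2 - u_1)$ in $\Omega$. If $t^{\star} = 0$ then $u_1 = u_2$ and we are done.

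Otherwise $w = t^{\star}\varphi_1$ never vanishes in $\Omega$, so $u_1(x) \ne u_2(x)$ for all $x$, and the identity $R \equiv 0$ says that the tangent to $f$ at $u_1(x)$ meets the graph again at $u_2(x)$; since $f$ is convex, it then coincides with this line on the whole (nondegenerate) interval with endpoints $u_1(x), u_2(x)$. By continuity of $u_1, u_2$ and connectedness of $\Omega$ these intervals chain into a single interval containing $u_1(\Omega) \cup u_2(\Omega)$, so $f(t) = at + b$ on the ranges of $u_1$ and $u_2$, for constants $a, b$. Then $J_{u_1} = L + a$ and $0 = \mu_1 = \mu_1[L + a, \Omega]$; as $(L + a)\varphi_1 = 0$ with $\varphi_1 > 0$, uniqueness of the positive eigenfunction forces $a = \mu_1[L, \Omega]$. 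Lastly, $-Lu_1 = au_1 + b$ with $u_1 = 0$ on $\partial\Omega$, and testing against the positive principal eigenfunction $\psi$ of the adjoint operator $L^{T} + \mu_1[L,\Omega]$ (which also has eigenvalue $0$) and integrating by parts gives $b\int_\Omega \psi = 0$, hence $b = 0$. Thus $f(u) = \mu_1[L, \Omega]\,u$ on the ranges of $u_1$ and $u_2$. I expect the main obstacle to be executing the degenerate case cleanly on a merely bounded domain: one needs the existence, positivity, simplicity and Hopf-type boundary behaviour of the principal eigenfunctions of the non-self-adjoint operators $J_{u_i}$ and $L^{T}$ (quoted from~\cite{BerestyckiNirenbergVaradhan}), a strong maximum principle for $W^{2,n}_{\rm loc}$ supersolutions with a bounded zeroth-order coefficient of arbitrary sign, and care at the contact point of the slide and with the Fredholm compatibility yielding $b = 0$.
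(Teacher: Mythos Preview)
Your split into the nondegenerate case ($\mu_1,\mu_2>0$) and the degenerate case is clean, and the nondegenerate part is correct via the BNV maximum principle. The problem is the sliding argument. The proposition is stated for a \emph{general} bounded domain $\Omega$ with no regularity assumption, and solutions only in $C^{0}(\overline{\Omega})\cap W^{2,n}_{\rm loc}(\Omega)$. Your claim that $t^{\star}$ is finite rests on ``$\varphi_1$ is comparable to ${\rm dist}(\cdot,\partial\Omega)$'' via Hopf's lemma, but Hopf requires an interior ball condition you do not have; in BNV's generality the principal eigenfunction is only known to be positive and in $W^{2,n}_{\rm loc}(\Omega)$, not even continuous up to $\partial\Omega$. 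Worse, even granting $\varphi_1\sim{\rm dist}$, the function $w$ is merely $C^{0}$ up to $\partial\Omega$, which does \emph{not} give $|w|/{\rm dist}$ bounded; hence $t^{\star}=\inf_{\Omega} w/\varphi_1$ may be $-\infty$ and the slide never starts. The second invocation of Hopf (``rules out contact only on $\partial\Omega$'') hits the same wall: to push $t^{\star}$ up you would need $z\ge\epsilon\varphi_1$, i.e.\ boundary control of $z/\varphi_1$ that is unavailable here.

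The paper sidesteps all boundary issues by never sliding in $\Omega$. It works instead in the open positivity set $\Omega^{+}=\{w>0\}$, where $w$ is automatically continuous up to $\partial\Omega^{+}$ and vanishes there. From $J_{u_2}w\ge 0$ and domain monotonicity $\mu_1[J_{u_2},\Omega^{+}]\ge\mu_1[J_{u_2},\Omega]\ge 0$, BNV's characterization (their Corollary~2.2, valid in arbitrary bounded domains) forces $w$ to be a principal eigenfunction of $J_{u_2}$ in $\Omega^{+}$; this yields equality in the convexity inequality and hence $f$ affine on the relevant intervals, much as in your argument. The identification $a=\mu_1[L,\Omega^{+}]$, $b=0$ by Fredholm, and the final step ruling out $\Omega^{-}\neq\emptyset$ via strict domain monotonicity of $\mu_1$, are then close to what you wrote. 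Your approach would go through if one additionally assumed $\Omega$ smooth and $u_i\in C^{1}(\overline{\Omega})$ (which is in fact the regime of the paper's application), but for the proposition as stated you need the subdomain trick.
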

\begin{remark}
Here $\mu_1[L, \Omega]$ denotes the principal (or smallest) eigenvalue of $L$ in $\Omega$,
with the sign convention $-L \varphi = \mu_1 \varphi$.
It is characterized by (see \cite{BerestyckiNirenbergVaradhan})
\[
\mu_1[L, \Omega] =\sup\left\{
\mu \colon \text{ there is a function } \varphi > 0 \in W^{2,n}_{\rm loc}(\Omega) \text{ satisfying } L \varphi + \mu \varphi \leq 0 \text{ in } \Omega
\right\}.
\]
Moreover, $L$  can be any uniformly elliptic second order operator.
In particular, we allow it to have zero order terms.
\end{remark}
\begin{remark}
In the proof of Theorem~\ref{thm:c11} above,
we only need a weaker version of Proposition~\ref{prop:uniqueness}.
Namely, we could assume additionally that $u_1 \leq u_2$,
which admits a shorter proof.
However, the present statement might be more useful in applications.
\end{remark}

\begin{proof}
Assume that $u_1 \neq u_2$ and consider the difference $w := u_2 - u_1$.
Let 
\[
\Omega^{+} := \{x \in \Omega \colon u_2(x) > u_1(x)\} = \{w > 0\},
\]
and assume that
$\Omega^{+} \neq 0$ (otherwise we exchange the roles of $u_1$ and $u_2$).
By convexity
\[
- L w = f(u_2) - f(u_1)\leq f'(u_2) w,
\]
and hence
\begin{equation}
\label{she}
J_{u_2} w = L w + f'(u_2) w \geq 0 \quad \text{ in } \Omega.
\end{equation}

By the monotonicity of the principal eigenvalue with respect to the domain, 
since $u_2$ is stable,
we have that
\begin{equation}
\label{she1}
\mu_1[J_{u_2}, \Omega^{+}] \geq \mu_1[J_{u_2}, \Omega] \geq 0.
\end{equation}
Since $w > 0$ in $\Omega^{+}$, by \eqref{she} and \eqref{she1}
it follows that 
\[
\left\{
\begin{array}{ll}
J_{u_2} w +\mu_1[J_{u_2}, \Omega^{+}] w
\geq 0 & \text{ in } \Omega^{+}\\
w = 0 & \text{ on } \partial \Omega^{+}.
\end{array}
\right.
\]
Applying 
\cite[Corollary~2.2]{BerestyckiNirenbergVaradhan},
we deduce that $w$ is a positive principal eigenfunction of $J_{u_2}$ in $\Omega^{+}$, 
that is, satisfying
\begin{equation}
\label{she2}
J_{u_2} w 
+\mu_1[J_{u_2}, \Omega^{+}] w
= 0
\quad \text{ in } \Omega^{+}.
\end{equation}
Using the equation $- L w = f(u_2) - f(u_1)$ in $\Omega^{+}$, from \eqref{she2} we see that
\begin{equation}
\label{anticonvex}
f(u_2) - f(u_1) - f'(u_2) (u_2 - u_1)=  
\mu_1[J_{u_2}, \Omega^{+}]
(u_2 - u_1) \geq 0 \quad \text{ in } \Omega^{+}.
\end{equation}
By convexity, we also have the opposite inequality $f(u_2) - f(u_1) - f'(u_2) (u_2 - u_1) \leq 0$.
Hence, by~\eqref{anticonvex}, we conclude $\mu_1[J_{u_2}, \Omega^{+}] = 0$ and the nonlinearity $f$ is affine in the union of intervals $[u_1(x), u_2(x)]$ with $x \in \Omega^{+}$.

If $f$ is of the form $f(u) = a u + b$ in the ranges above,
then $- L w = a w$ in $\Omega^{+}$, with $w > 0$, and therefore $a = \mu_1[L, \Omega^{+}]$.
Now, to have nontrivial solutions of $- L u = \mu_1[L, \Omega^{+}] u + b$,
the Fredholm alternative forces $b = 0$.
We conclude that $f(u) = \mu_1[L, \Omega^{+}] u$ in the ranges of $u_1$ and $u_2$ in $\Omega^{+}$.

If $\Omega^{-} := \{w < 0\} \neq \emptyset$,
then, arguing as above with $- w$ in place of $w$, we deduce that
\[
J_{u_1}w + \mu_1[J_{u_1}, \Omega^{-}] w = 0 \quad \text{ in } \Omega^{-},
\]
with $\mu_1[J_{u_1}, \Omega^{-}] = 0$
and $f(u) = \mu_1[L, \Omega^{-}] u $ in the ranges of $u_2$ and $u_1$ in $\Omega^{-}$.

The regularity of $f$ and the continuity of the solutions now forces $\mu_1[L, \Omega^{+}] = \mu_1[L, \Omega^{-}]$.
Now, on the one hand,
by the stability of $u_2$ we have that
\[
0 \leq \mu_1[J_{u_2}, \Omega] 
= 
\mu_1\big[L + \mu_1[L, \Omega^{+}], \Omega \big] =\mu_1[L, \Omega] - \mu_1[L, \Omega^{+}].
\]
On the other hand, 
since $\Omega^{+} \subsetneq \Omega$ by assumption,
the strict monotonicity of $\mu_1$ yields
\[
\mu_1[L, \Omega] - \mu_1[L, \Omega^{+}] < 0.\footnote{To show that $\mu_1[L, \Omega] < \mu_1[L, \Omega^{+}]$, first notice that we already have $\mu_1[L, \Omega]\leq \mu_1[L, \Omega^{+}]$ by definition of $\mu_1$ in $\Omega^+$. Suppose that $\mu_1[L, \Omega]  = \mu_1[L, \Omega^{+}]$ and consider $\varphi_1$ a positive principal eigenfunction of $L$ in $\Omega$. In particular $L \varphi_1 + \mu_1[L, \Omega^{+}]\varphi_1 = 0$ in $\Omega^{+}$ and $\varphi_1 > 0$ in $\Omega^{+}$. By  Corollary~2.1 in \cite{BerestyckiNirenbergVaradhan} we conclude $\varphi_1 = 0$ on $\partial \Omega^{+} \cap \Omega \neq \emptyset$, contradicting the positivity in $\Omega$.}
\]
This contradiction forces either $\Omega^{+}$ or $\Omega^{-}$ to be empty,
and therefore $f(u) = \mu_1[L, \Omega] u$ in the ranges of $u_1$ and $u_2$, which was the claim.
\end{proof}

\section*{Acknowledgments}

The author wishes to thank Xavier Cabr\'{e} for useful discussions on the topic of this article, as well as for his encouragement over the years.
He also thanks the anonymous referee for their suggestions, which improved the presentation of the paper.


\begin{thebibliography}{99}

%\bibitem{article}
%\textit{T.~Angel} and \textit{I.~E.~Shparlinski},
%Article in a journal, J. reine angew. Math. \textbf{647} (2010), 123--156.

%\bibitem{preprint}
%\textit{C.~Bonnaf\'e},
%Preprint, preprint 2008, \url{http://arxiv.org/abs/0805.4100}.

%\bibitem{proceedings}
%\textit{P. Brooksbank},
%Article in a conference proceedings,
%in: Finite geometries (Pingree Park 2004), Oxford University Press, Oxford (2006), 1--16.

%\bibitem{phdthesis}
%\textit{J.~D.~King},
%Unpublished dissertation,
%Ph.D. thesis, University of Cambridge, 1995.

%\bibitem{book}
%\textit{F. Sukochev}, \textit{S. Lord} and \textit{D. Zanin},
%Book, 2nd ed., De Gruyter, Berlin 2012.

%\bibitem{article}
%\textit{T.~Angel} and \textit{I.~E.~Shparlinski},
%Article in a journal, J. reine angew. Math. \textbf{647} (2010), 123--156.

%%%

\bibitem{BerestyckiKiselevNovikovRyzhik}
\textit{H.~Berestycki}, \textit{A.~Kiselev}, \textit{A.~Novikov}, and \textit{L.~Ryzhik},
The explosion problem in a flow, J. Anal. Math. \textbf{110} (2010), 31--65.

\bibitem{BerestyckiNirenbergVaradhan}
\textit{H.~Berestycki}, \textit{L.~Nirenberg}, and \textit{S.~R.~S.~Varadhan},
The principal eigenvalue and maximum principle for second-order elliptic operators in general domains, Comm. Pure Appl. Math. \textbf{47} (1994), 47--92.

\bibitem{Brezis-IFT}
\textit{H.~Brezis},
Is there failure of the inverse function theorem?,
in: Morse theory, minimax theory and their applications to nonlinear differential equations \textbf{1} (Beijing 1999), Int. Press, Somerville, MA (2003), 22--23.

\bibitem{BrezisVazquez}
\textit{H.~Brezis} and \textit{J.~L.~V\'{a}zquez},
Blow-up solutions of some nonlinear elliptic problems, Rev. Mat. Univ. Complut. Madrid \textbf{47} (1997), 443--469.

\bibitem{CabreRadial}
\textit{X.~Cabr\'{e}}, 
Estimates controlling a function by only its radial derivative and applications to stable solutions of elliptic equations, preprint 2022, \url{https://arxiv.org/abs/2211.13033}.

\bibitem{Cabre-Dim4}
\textit{X.~Cabr\'{e}}, 
Regularity of minimizers of semilinear elliptic problems up to dimension 4, Comm. Pure Appl. Math. \textbf{63} (2010), 1362--1380.

\bibitem{CabreCapella-Radial}
\textit{X.~Cabr\'{e}} and \textit{A.~Capella}, 
Regularity of radial minimizers and extremal solutions of semilinear elliptic equations, J. Funct. Anal. \textbf{238} (2006), 709--733.

\bibitem{CabreFigalliRosSerra}
\textit{X.~Cabr\'{e}}, \textit{A.~Figalli}, \textit{X.~Ros-Oton}, and \textit{J.~Serra},
Stable solutions to semilinear elliptic equations are smooth up to dimension 9, Acta Math. \textbf{224} (2020), 187--252.

\bibitem{CabreMiraglioSanchon}
\textit{X.~Cabr\'{e}}, \textit{P.~Miraglio}, and \textit{M.~Sanch\'{o}n},
Optimal regularity of stable solutions to nonlinear equations involving the {$p$}-{L}aplacian, Adv. Calc. Var. \textbf{15} (2022), 749--785.

\bibitem{CabreRosOton-DoubleRev}
\textit{X.~Cabr\'{e}} and \textit{X.~Ros-Oton},
Regularity of stable solutions up to dimension {$7$} in domains of double revolution, Comm. Partial Differential Equations \textbf{38} (2013), 135--154.

\bibitem{CostadeSouzaMontenegro}
\textit{F.~Costa}, \textit{G.~F.~de~Souza}, and \textit{M.~Montenegro},
Extremal solutions of strongly coupled nonlinear elliptic systems and {$L^\infty$}-boundedness, J. Math. Anal. Appl. \textbf{513} (2022), Paper No. 126225, 28.

\bibitem{CowanGhoussoub}
\textit{C.~Cowan} and \textit{N.~Ghoussoub},
Regularity of the extremal solution in a {MEMS} model with advection, Methods Appl. Anal. \textbf{15} (2008), 355--360.

\bibitem{CrandallRabinowitz}
\textit{M.~G.~Crandall} and \textit{P.~H.~Rabinowitz},
Some continuation and variational methods for positive solutions of nonlinear elliptic eigenvalue problems, Arch. Rational Mech. Anal. \textbf{58} (1975), 207--218.

\bibitem{Dupaigne}
\textit{L.~Dupaigne},
Stable solutions of elliptic partial differential equations, Chapman and Hall/CRC, 2011.

\bibitem{ErnetaBdy1}
\textit{I.~U.~Erneta}, 
Energy estimate up to the boundary for stable solutions to semilinear elliptic problems, J. Differential Equations \textbf{378} (2024), 204--233.

\bibitem{ErnetaInterior}
\textit{I.~U.~Erneta},
Stable solutions to semilinear elliptic equations for operators with variable coefficients, Commun. Pure Appl. Anal. \textbf{22} (2023), 530--571.

\bibitem{Gelfand}
\textit{I.~M.~Gel'fand},
Some problems in the theory of quasilinear equations, Amer. Math. Soc. Transl. (2) \textbf{29} (1963), 295--381.

\bibitem{GilbargTrudinger}
\textit{D.~Gilbarg} and \textit{N.~S.~Trudinger},
Elliptic partial differential equations of second order, 2nd ed., Springer Berlin, New York 2001.

\bibitem{JosephLundgren}
\textit{D.~D.~Joseph} and \textit{T.~S.~Lundgren},
Quasilinear {D}irichlet problems driven by positive sources, Arch. Rational Mech. Anal. \textbf{49} (1973), 241--269.

\bibitem{LangAnalysis}
\textit{S.~Lang},
Real and functional analysis, 3nd ed., Graduate Texts in Mathematics Springer-Verlag, New York 1993.

\bibitem{Martel}
\textit{Y.~Martel},
Uniqueness of weak extremal solutions of nonlinear elliptic problems, Houston J. Math. \textbf{23} (1997), 161--168.

\bibitem{Nedev}
\textit{G.~Nedev},
Regularity of the extremal solution of semilinear elliptic equations, C. R. Acad. Sci. Paris S{\'e}r. I Math. \textbf{330} (2000), 997--1002.

\bibitem{SternbergZumbrun1}
\textit{P.~Sternberg} and \textit{K.~Zumbrun},
Connectivity of phase boundaries in strictly convex domains, Arch. Rational Mech. Anal. \textbf{141} (1998), 375--400.

\bibitem{Villegas}
\textit{S.~Villegas}, 
Boundedness of extremal solutions in dimension 4, Adv. Math. \textbf{235} (2013), 126--133.

\end{thebibliography}
\end{document}